\documentclass[11pt]{article}
\usepackage[a4paper,margin=2.5cm]{geometry}
\usepackage[T1]{fontenc}
\usepackage{lmodern}
\usepackage{amsmath,amssymb,amsthm,mathtools}
\usepackage{bbm}
\usepackage{enumitem}
\usepackage{hyperref}
\usepackage{xcolor}
\usepackage{amsfonts,dsfont,pifont}
\usepackage{graphicx}
\usepackage{mathabx}
\usepackage{booktabs}
\usepackage{longtable}
\usepackage{array}
\usepackage[normalem]{ulem}

\title{Genetic weight of a mutant in a biparental Moran model with infinite selection}
\author{
Camille Coron%
\thanks{Université Paris-Saclay, AgroParisTech, INRAE, UMR MIA Paris-Saclay, 91120, Palaiseau, France. Email: \texttt{camille.coron@inrae.fr}}
\and
Olivier Hénard%
\thanks{Université Paris-Saclay, CNRS, Laboratoire de mathématiques d'Orsay, 91405, Orsay, France. Email: \texttt{olivier.henard@universite-paris-saclay.fr}}
}

\date{\today}

\theoremstyle{plain}
\newtheorem{theorem}{Theorem}
\newtheorem{proposition}{Proposition}
\newtheorem{lemma}{Lemma}
\theoremstyle{remark}
\newtheorem{remark}{Remark}

\theoremstyle{remark}

\begin{document}

\maketitle

\begin{abstract}
We study, after Coron Le Jan \cite{coron2024-ukrainian}, the genetic contribution of a single strongly advantageous mutant in a biparental Moran model under infinite selection. We prove a functional limit theorem for the entire beneficial mutation spreading period: after normalization by \(\sqrt N\), the contributions carried by the advantaged and disadvantaged subpopulations converge jointly in law to a non-degenerate random multiple of an explicit deterministic profile. Randomness is generated during the initial microscopic growth of the mutant lineage and subsequently propagated through the population along a deterministic macroscopic trajectory. At the heart of this phenomenon is an eigenmode martingale that measures the imbalance between the mean genetic weight of the founding ancestor within the two classes; its limiting value provides precisely the random amplitude of the macroscopic profile and is characterized by a smoothing equation.
\end{abstract}

\section{Context}\label{sec:introduction}

When a strongly beneficial mutation appears in a single individual, its spread through the population affects not only the selected allele itself but also the transmission of the whole genetic material carried by its original bearer, a phenomenon known as genetic hitchhiking~\cite{smith_haigh_1974}. In this work, we study this effect in a biparental Moran model with infinite viability selection.

The Moran model is a classical population genetics model in which the population size remains constant and reproduction and replacement occur one individual at a time; see~\cite{Moran1958}. In the usual monoparental version, each newborn has a single parent, so that the genetic contribution of a given ancestor (i.e. the quantity of genome of the population coming from this individual) is simply the number of its currently alive descendants. In a biparental population, the situation is fundamentally different. Each newborn has two parents, and in the model considered here, the genetic contribution from any given ancestor carried by this newborn is defined as the average of the contributions carried by those two parents. Contributions are therefore real-valued and evolve through series of averaging along the random pedigree. 

We start from a single individual carrying a strongly beneficial mutation, and whose genetic contribution through time is our object of interest. Selection acts through death: individuals carrying the beneficial mutation are less likely to be replaced. This mutation is transmitted from parents to offspring following a Mendelian rule at one haploid locus (i.e. the type, advantaged or not, of a newborn is chosen uniformly at random among the types of its two parents). By contrast, the genetic contribution carried by an offspring is defined to be the average of the genetic contributions carried by its two parents. This rule may be viewed as an idealized genome-wide approximation, corresponding heuristically to a law-of-large-numbers effect across many independently transmitted neutral loci. Thus selection does not directly modify the transmission of genetic material at reproduction; rather, by changing which individuals survive and are replaced, it reshapes the pedigree along which this material is propagated.

Biparental genetics studies started with the articles \cite{Chang1999} dealing with time scales to common ancestors and \cite{Derrida2000} studying the asymptotic quantity of transmitted genome in the neutral biparental Wright-Fisher model. This topic has also received some more recent interest, through the articles \cite{BartonEtheridge2011,MatsenEvans2008,GravelSteel2015} studying ancestors' genetic contribution dynamics in biparental Wright-Fisher populations, the articles \cite{Lambertetal2021,FoutelRodieretal2026} and \cite{PfaffelhuberWakolbinger2023, adeosun2026} studying the genome structure of sexually reproducing populations, and the articles \cite{WakeleyEtAl2012,WakeleyEtAl2016,DiamantidisEtAl2024,Newman2025} studying the conditioning of gene genealogies by the fixed underlying pedigree. Previous work has also addressed asymptotic genetic contribution for the neutral biparental Moran pedigree~\cite{CoronLeJan2022}, the expected genetic contribution of a single strongly advantaged mutant~\cite{coron2024-ukrainian}, and the deterministic large-population regime in which a macroscopic fraction of the initial population is advantageous~\cite{CoronLeJan2025}. Here we consider the singular regime in which the advantageous type starts from a single individual and selection is infinite. We derive a functional limit theorem describing the evolution of the founder's genetic contribution throughout the advantageous allele spreading, from the appearance of the mutant up to fixation.

A distinctive feature of this regime is that randomness survives in the large-population limit. When a positive proportion of the population is initially advantageous, the system is in a law-of-large-numbers regime and the limiting genetic profile is deterministic~\cite{CoronLeJan2025}. Starting from a single mutant produces a different mechanism. During the first generations of the allelic replacement, the small advantaged population develops randomly and generates a non-degenerate random amplitude. Once the advantaged population has reached a mesoscopic and then macroscopic size, fluctuations become negligible at the relevant scale: the random amplitude created during this initial phase is then transported through the population according to an essentially deterministic profile. The limiting process therefore separates into a random amplitude, generated near the origin of the advantageous allele spreading, and a deterministic macroscopic shape.

This separation between the random seeding phase and the subsequent bulk evolution also guides the proof of our main result, stated in Theorem \ref{thm:main}. The early growth of the advantaged population is analysed through a coupling with a random recursive tree, which identifies the law of the random amplitude appearing in the limit as the solution to a fixed point equation. The macroscopic phase is controlled by a martingale built from the difference between the mean genetic contributions carried by advantageous and disadvantaged individuals. Finally, separate approximation arguments connect the initial regime to the bulk evolution and control the terminal window near fixation. Section \ref{sec:structure-proof} describes in more details the structure and main ingredients of the proof of Theorem \ref{thm:main}.

\section{Model and main result}\label{sec:model&result}
\paragraph{Model}
We consider a discrete-time biparental Moran model with two types,
advantaged and disadvantaged, and with infinite selection at death. The population size \(N\ge1\) is fixed, and we write 
\begin{equation*}
        I_N:=\{1,\ldots,N\}.
\end{equation*}
Each individual has a type, which remains fixed throughout its lifetime. We denote by \(\mathcal Y_0\subset I_N\) the set of initially advantaged individuals and, unless otherwise stated, assume 
$\mathcal Y_0=\{1\}$, corresponding to the arrival of a single advantaged mutant.

At each discrete time \(n\in\mathbb Z_+\), two parents \(\mu_n\) and \(\pi_n\) are sampled independently and uniformly from \(I_N\). Independently, as long as $Y_n<N$, one individual \(\kappa_n\) is sampled uniformly from the set of disadvantaged individuals and is replaced by a child of \(\mu_n\) and \(\pi_n\). 
By convention, the child inherits the type of the mother \(\mu_n\): since the two parents are sampled symmetrically, this is equivalent in law to choosing uniformly which parent transmits the allele at the selected locus. An example of simulation of this pedigree and the advantage status of individuals is given in Figure \ref{fig:Pedigree} for $N=4$ and $6$ time steps.

Thus the number of advantaged individuals increases precisely when \(\mu_n\) is advantaged. Sampling the dying individual only among the disadvantaged class is the infinite viability selection assumption. Once the number of advantaged individuals is equal to $N$ all individuals are equivalent and the model becomes neutral, i.e. the dying individual is chosen uniformly in the population. Asymptotic genetic contribution of ancestors in this neutral framework has been studied in \cite{CoronLeJan2022}.

More formally, let \(\mathcal Y_n\) be the advantaged class at time \(n\), and set
\begin{equation*}
        Y_n:=|\mathcal Y_n|,
        \qquad
        \mathcal D_n:=I_N\setminus\mathcal Y_n,
        \qquad
        D_n:=N-Y_n.
\end{equation*}

We shall use the natural filtration of the reproduction process,
\begin{equation*}
        \mathcal F_n
        =
        \sigma\bigl(\mathcal Y_0,(\mu_\ell,\pi_\ell,\kappa_\ell):0\le \ell<n\bigr).
\end{equation*}
Thus \(\mathcal Y_n,A_n,B_n,C_n\) are \(\mathcal F_n\)-measurable, and,
conditionally on \(\mathcal F_n\), the parents \(\mu_n,\pi_n\) are independent uniform variables on \(I_N\), while \(\kappa_n\) is uniform on \(\mathcal D_n\) (as long as $Y_n<N$, then uniform on $I_N$), independently of the parents.

We encode genetic inheritance through stochastic matrices
\((A_n)_{n\ge0}\), with \(A_n\in[0,1]^{N\times N}\), defined by
\begin{equation*}
        A_0(i,j):=\mathbf 1_{\{i=j\}},
\end{equation*}
and, recursively,
\begin{equation}\label{eq:dyn-A}
        A_{n+1}(i,j)
        =
        \begin{cases}
        A_n(i,j), & i\neq \kappa_n,\\[4pt]
        \dfrac{A_n(\mu_n,j)+A_n(\pi_n,j)}{2}, & i=\kappa_n.
        \end{cases}
\end{equation}
Therefore, conditionally on \(\mathcal F_n\), since the genealogy of a gene at a locus that is independent from the locus under selection can be seen as a random walk on the pedigree backwards in time,
\begin{equation*}
        A_n(i,j)
        =
        \mathbb P\left(
        \text{a gene sampled from individual }i\text{ at time }n
        \text{ descends from }j\text{ at time }0
        \,\middle|\,\mathcal F_n
        \right).
\end{equation*}
In this sense, \(A_n(i,j)\) is the genetic contribution of ancestor \(j\) to the genome of individual \(i\) at time \(n\) and $\sum_{i=1}^NA_n(i,j)$ is the total genetic contribution of ancestor $j$ at time $n$. Figure \ref{fig:Pedigree} shows a simulation of the population with $N=4$ until fixation of the advantaged type as well as the genetic contributions of the initially advantaged ancestor $1$ in each individual once this fixation has occurred.

\begin{figure}\label{fig:Pedigree}
\center
    \includegraphics[scale=0.9]{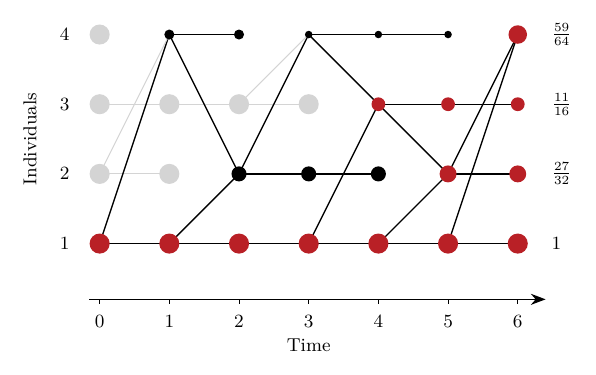}
\caption{A small $N$ instance of the two-type Moran model, starting from a single advantaged individual (in red). Descendants of the initial mutant are shown in black or red according to their type (disadvantaged or advantaged, respectively), which is inherited from the mother. Circle radii are proportional to the fraction of genome inherited from the initial mutant. All other individuals are shown in light grey. The numbers on the right indicate the genetic contribution of the initial mutant (ancestor $1$) to each individual in the final generation, namely $A_{S_N}(1,j)$ for $1\leq j\leq N$.}
\end{figure}

We focus on the total genetic contribution $\sum_{i=1}^NA_n(i,1)$ of the initially advantaged individual. Following \cite{CoronLeJan2025}, we decompose it according to the type of individuals carrying this contribution:
\begin{equation}\label{def-BC}
        B_n
        :=
        \sum_{i\in\mathcal Y_n}A_n(i,1),
        \qquad
        C_n
        :=
        \sum_{i\in\mathcal D_n}A_n(i,1).
\end{equation}
The variable \(B_n\) is the genetic contribution carried by advantaged individuals, while \(C_n\) is the contribution carried by disadvantaged individuals. Hence $B_n+C_n$ is the total genetic contribution of ancestor $1$ at time $n$.\\

Throughout the paper, the dependence in \(N\) is usually suppressed from the notation.

\paragraph{Main result}

For \(1\le k\le N\), define
\[
        S_k:=\inf\{n\ge0:Y_n=k\}.
\]
Thus \(S_1=0\) under our standing assumption \(\mathcal Y_0=\{1\}\). The time \(S_N\) is the fixation time of the advantaged type.
Since \(Y_n\) is nondecreasing and \(\mathcal F_n\)-measurable, \(S_k\) is an
\((\mathcal F_n)\)-stopping time. For \(y\in[0,1]\), set
\[
        k_N(y):=1\vee\lfloor yN\rfloor
\] and equip $\mathbb{R}^2$ with the infinite norm $|.|$.

\begin{theorem}\label{thm:main}
As \(N\to\infty\),
\[
        \left(
        \frac{1}{\sqrt N}
        \binom{B_{S_{k_N(y)}}}{C_{S_{k_N(y)}}}
        \right)_{0\leq y\leq 1}
        \Longrightarrow
        \left(
        W\binom{b(y)}{c(y)}
        \right)_{0\leq y\leq 1}.
\]
in \(D([0,1],\mathbb R^2)\), endowed with the Skorokhod \(J_1\)
topology, where
\[
        b(y):=\frac{2}{\sqrt\pi}\sqrt y\,(1+y),
        \qquad
        c(y):=\frac{2}{\sqrt\pi}\sqrt y\,(1-y),
        \qquad y\in[0,1].
\]
The random variable \(W\) is the unique \(L^2\) solution with mean \(1\)
of
\begin{equation*}
        W
        \stackrel d=
        \sqrt U\,W^{(1)}
        +
        \frac12\sqrt{1-U}\,W^{(2)},
\end{equation*}
where \(U\sim\operatorname{Unif}[0,1]\), and
\(W^{(1)}\) and \(W^{(2)}\) are independent copies of \(W\),
independent of \(U\). Moreover, \(W>0\) almost surely.
\end{theorem}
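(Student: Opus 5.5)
The plan is to reduce everything to the embedded chain observed at the times $S_k$, $1\le k\le N$, and to prove three things along it: that a single eigenmode martingale converges, that the total contribution is driven by this martingale, and that the two steps can be glued. Write $\bar b_k:=B_{S_k}/k$ and $\bar c_k:=C_{S_k}/(N-k)$ for the mean contributions in the two classes.

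\textbf{Step 1: one-step drifts and the martingale.} At a step where the mother is disadvantaged, $Y$ does not change. The dead individual is then replaced by a disadvantaged child. Given $\mathcal F_n$, the child's expected weight is $(B_n+C_n)/N$ and the removed weight has expectation $C_n/D_n$. When the mother is advantaged, mass $(B_n+C_n)/N$ in expectation is added to $B$, and $C$ loses the weight of $\kappa_n$. Summing the geometric number of neutral steps between $S_k$ and $S_{k+1}$, I expect a linear recursion of the form
\[
\mathbb E\big[(\bar b_{k+1},\bar c_{k+1})\,\big|\,\mathcal F_{S_k}\big]=M_k(\bar b_k,\bar c_k)
\]
for an explicit $2\times2$ matrix $M_k$. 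Its eigenvector $(1,-1)$ should have eigenvalue of order $1-\tfrac1{2k}$, so that for a suitable normalizing sequence $h(k)\sim \mathrm{const}\cdot\sqrt{k/N}$,
\[
\mathcal M_k:=h(k)\,\sqrt N\,(\bar b_k-\bar c_k)
\]
is an $(\mathcal F_{S_k})$-martingale. This matches the profile: $b(y)/y-c(y)/(1-y)=\tfrac{2}{\sqrt\pi}y^{-1/2}$.

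The total $T_k:=B_{S_k}+C_{S_k}$ has drift per time step $Y_n(\bar b-\bar c)/N$. The expected waiting time at level $k$ is $N/k$, so its drift per $Y$-increment is $\approx \bar b_k-\bar c_k\approx \mathcal M_k/\sqrt{k}$ up to constants. Integrating and dividing by $\sqrt N$ gives $T_{S_{k_N(y)}}/\sqrt N\approx \mathcal M_\infty\cdot\tfrac{4}{\sqrt\pi}\sqrt y=\mathcal M_\infty\,(b+c)(y)$. Combined with $\bar b-\bar c$, this yields $(B,C)/\sqrt N\approx \mathcal M_\infty\,(b,c)(y)$.

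\textbf{Step 2: the initial phase.} For $k\le N^{\varepsilon}$ I will couple with a random recursive tree. Each new advantaged individual has a uniformly chosen advantaged mother, and, up to events of probability $O(k/N)$, a father carrying negligible weight of ancestor $1$. It therefore carries half its mother's weight. The ancestor's weight among the advantaged then behaves like a weighted RRT. The root's subtree split at the first branching is governed by $U$, and the subtree sizes grow like $\sqrt{\text{size}}$ in the normalization, which produces the exponents. The halving of the child's weight produces the factor $\tfrac12$. This gives $\mathcal M_{N^\varepsilon}\to W$ in $L^2$, with $W$ solving $W\stackrel d=\sqrt U W^{(1)}+\tfrac12\sqrt{1-U}\,W^{(2)}$.

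Uniqueness among mean-one $L^2$ solutions is a contraction in the Wasserstein-2 distance. The relevant constant is $\mathbb E[U+\tfrac14(1-U)]=\tfrac58<1$. Positivity $W>0$ a.s.\ then follows from standard smoothing-transform arguments: $P(W=0)$ solves $q=q^2$, and $q=1$ is excluded by the mean.

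\textbf{Step 3: the bulk phase and tightness.} For $N^{\varepsilon}\le k\le (1-\delta)N$ I will bound the quadratic variation of $\mathcal M$ by $\sum_k O(1/k)\cdot O(\text{size}^2)$. The goal is $\mathbb E[\sup_{k\ge N^\varepsilon}|\mathcal M_k-\mathcal M_{N^\varepsilon}|^2]\to0$ by Doob's inequality. I will then handle the martingale part of $T$ the same way, showing its fluctuations are $o(\sqrt N)$ uniformly. This gives uniform convergence on $[0,1-\delta]$, and since the limit is continuous, $J_1$ convergence follows.

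\textbf{Main obstacle.} I expect the hardest part to be the two boundary windows. Near $y=0$ the profile has a $\sqrt y$ singularity, so the early regime must be connected to the bulk with errors uniform in $k$. Near fixation, $D$ is small and $\bar c$ is poorly controlled. There I would first try the crude bound $C\le D\cdot\max_i A(i,1)$ together with an $L^2$ estimate on $B_{S_N}-B_{S_{(1-\delta)N}}$ via the $T$ drift. Then I would let $\delta\to0$, using $c(1)=0$ and the continuity of $b$.
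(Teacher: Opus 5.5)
Your architecture matches the paper's. You use the embedded chain at the $S_k$, the eigenmode $\bar b_k-\bar c_k$ (a left eigenvector of the drift matrix) normalized by $\asymp\sqrt k$, and the maternal-weight RRT coupling. You split the tree at the edge $\{1,2\}$, get uniqueness from the $W_2$ contraction with constant $5/8$ and positivity from $p=p^2$, and use Doob's inequality in the bulk. Integrating the drift of $T=B+C$ instead of that of $\bar c$ is equivalent bookkeeping, since $T_k=kX_k+N\bar c_k$.

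\textbf{The gap: the two boundary windows.} You flag them but do not close them, and the one tool you propose there fails. The bound $C\le D\cdot\max_iA(i,1)$ is empty. The founder is advantaged, so it is never killed before $S_N$, hence $A_n(1,1)=1$ and the bound gives only $C\le D\le\delta N\gg\sqrt N$. Restricting the maximum to $\mathcal D_n$ does not help. Disadvantaged children fathered by the founder or by one of its $\asymp\log N$ daughters have weight $\ge 1/4$; they are created at rate $\asymp(\log N)D/N^2$ per step and survive $\asymp D$ steps. So for $D\asymp\delta N$ the maximum is typically of order one. For the initial window you give no argument at all.

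\textbf{The missing idea: two one-sided facts along the embedded chain.}

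First, $k\mapsto B_k$ is nondecreasing: stable steps do not touch $B$, and a jump adds $\frac12(w(\mu)+w(\pi))\ge0$. So a supremum over a window is an endpoint difference, and a first-moment bound suffices; no $L^2$ estimate through $T$ is needed. Indeed $\mathbb E[B_{k+1}-B_k]\le\frac12\mathbb E[\bar b_k]+\frac1{2N}\mathbb E[B_k+C_{S_{k+1}-1}]\le K/\sqrt N$ for $k\ge N/3$. Hence $\mathbb E[B_N-B_{k_\delta}]\le K\delta\sqrt N$ with $k_\delta:=\lfloor(1-\delta)N\rfloor$, and $\sup_{k\le k_0}B_k=B_{k_0}=O_{\mathbb P}(\sqrt{k_0})$.

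Second, $C$ can only decrease at the jump that closes a block. So $\sup_{m\ge k}C_m\le C_k+\sum_{j\ge k}P_j$, where $P_j$ is the positive variation of $C$ over the stable steps of block $j$.
\begin{itemize}
\item After $i$ stable steps, the conditional mean of $\bar c$ is explicitly $\bar b_j+(1-\lambda)^i(\bar c_j-\bar b_j)$, which gives $\mathbb E[P_j]\le K/\sqrt j$.
\item The martingale property and the triangular drift give the exact identity $\mathbb E[\bar c_k]=(k a_{k,N}^{-1}-1)/N$, hence $\mathbb E[C_{k_\delta}]\le K\delta\sqrt N$.
\item Summing over the $\approx\delta N$ terminal levels gives $O(\delta\sqrt N)$, and over the first $k_0$ levels gives $O(\sqrt{k_0})$. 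This closes both windows.
\end{itemize}

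\textbf{Two smaller points.} Your bulk estimate must really be $\mathbb E[(\Delta\mathcal M_k)^2]\lesssim k^{-3/2}$. This comes from $w^2\le w$ and $\mathbb E[\bar b_k+\bar c_k]\lesssim k^{-1/2}$, together with control of $\bar c$ along the stable block. A per-step bound of order $1/k$ times an $O(1)$ quantity would sum to $\log N$ and would not close Doob's inequality. Also, you only need convergence in law of $\mathcal M_{k_0}$, not $L^2$ convergence. That is all the RRT identity in law plus an $L^1$ bound on the paternal excess gives.
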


\begin{remark}
In particular, at fixation,
\[
        \frac{B_{S_N}}{\sqrt N}
        \Longrightarrow
        \frac{4}{\sqrt\pi}W
        \qquad\text{as \(N\to\infty\).}
\]
The corresponding first moment also satisfies
$\mathbb{E}[B_{S_N}]/\sqrt N
        \longrightarrow
        4/\sqrt\pi.$
This follows from the explicit first-moment formula of Lemma~\ref{lem:uv-apriori}, applied at level \(N-1\), together with the fact that the final increment from \(S_{N-1}\) to \(S_N\) is bounded. This convergence of the first moment was previously proved in \cite{coron2024-ukrainian}. Theorem~\ref{thm:main} identifies the full limiting distribution and, more generally, the evolution of the genetic contribution throughout the fixation of the advantageous type.
\end{remark}

\begin{remark}
It is useful to compare Theorem~\ref{thm:main} with the regime of a macroscopic initial condition. Suppose that \(aN\) individuals are initially advantaged, with \(a\in(0,1)\), and let \(B_n\) (resp. $C_n$) denote the total genetic contribution of this initial advantaged population carried by advantaged (resp. disadvantaged) individuals. In this regime the natural scaling is \(N\), and the limit is deterministic.
More precisely, the results of \cite{CoronLeJan2025} suggest (by taking the $s\to \infty$ limit there that:
\[
        \sup_{y\in[a,1]}
        \left|
        \frac1N
        \binom{B_{S_{\lfloor yN\rfloor}}}{C_{S_{\lfloor yN\rfloor}}}
        -
        \binom{b_a(y)}{c_a(y)}
        \right|
        \xrightarrow[N\to\infty]{\mathbb P}
        0,
\]
where
\[
        b_a(y)
        =
        \sqrt{ay}
        +
        y\sqrt a\,(\sqrt y-\sqrt a),
        \qquad
        c_a(y)
        =
        (1-y)\sqrt a\,(\sqrt y-\sqrt a).
\]
Formally substituting \(a=1/N\) into these profiles recovers the \(\sqrt N\)-scale and the shape \(\sqrt y(1\pm y)\), but neither the prefactor \(2/\sqrt\pi\) nor the random multiplier \(W\). These are both created by the microscopic seeding phase. See also Remark \ref{rem:Kurtz-method} for a thorough comparison with \cite{CoronLeJan2025}.
\end{remark}

\section{Proof strategy}\label{sec:structure-proof}

The proof is organized around a triangular drift structure for the process observed at the hitting times \(S_k\), i.e. when the advantaged population first reaches size \(k\). Writing
\begin{equation}\label{eq-X-strategy}
        X_k:=\overline B_k-\overline C_k,
\end{equation}
with $$\overline{B}_k=\frac{B_{S_k}}{k} \qquad \text{and}\qquad\overline{C}_k=\frac{C_{S_k}}{N-k},$$
an exact computation over the block \([S_k,S_{k+1}]\) gives (Proposition \ref{prop:M})
\[
        \mathbb E\left[
        \begin{pmatrix}
        X_{k+1}\\
        \overline C_{k+1}
        \end{pmatrix}
        \,\middle|\,
        \mathcal F_{S_k}
        \right]
        =
        \begin{pmatrix}
        \alpha_{k,N} & 0\\[1mm]
        (2N+1)^{-1} & 1
        \end{pmatrix}
        \begin{pmatrix}
        X_k\\
        \overline C_k
        \end{pmatrix}.
\] Thus the first coordinate evolves
autonomously at the level of conditional drift, while the second is driven
by the first.

After multiplication by an explicit deterministic factor \(a_{k,N}\),
the first coordinate becomes the martingale
\[
        M_k^{(N)}:=a_{k,N}X_k.
\]
At the same time, the second coordinate admits the exact decomposition
\begin{equation}\label{eq-decompo-C-strategy}
        \overline C_{k+1}
        =
        \overline C_k
        +
        \frac{1}{2N+1}a_{k,N}^{-1}M_k^{(N)}
        +
        \Delta R_{k+1},
        \qquad \text{where}\qquad
        \mathbb E\left[
        \Delta R_{k+1}
        \,\middle|\,
        \mathcal F_{S_k}
        \right]
        =0.
\end{equation}
These two identities contain the basic mechanism of the proof: the random
amplitude is carried by \(M^{(N)}\), while the macroscopic profile is obtained
by integrating this mode through the evolution of \(\overline C\).

We choose a mesoscopic sequence \(k_0=k_0(N)\), such that $1<<k_0<<N$ when $N$ goes to infinity. If paternal contributions are discarded, the maternal genealogy of the advantaged individuals is exactly a random recursive tree; see Lemma~\ref{lem:maternal-equals-RRT}. The associated genetic weight, after normalization, is an \(L^2\)-bounded martingale whose limit is denoted \(W\); see Proposition~\ref{prop:RRT-mart}. 
The recursive decomposition of the tree gives a smoothing equation for \(W\), whose \(L^2\)-solution with mean \(1\) is unique and strictly positive; see Proposition~\ref{prop:RRT-smoothing}.
The paternal contribution up to level \(k_0\) is proven to be negligible (Lemma~\ref{lem:paternal-error}), and the disadvantaged contribution is negligible at the same scale by Lemma~\ref{lem:uv-apriori}. Consequently, Proposition~\ref{prop:B-seeding} gives
\[
        M_{k_0}^{(N)}
        \Longrightarrow W.
\]

The second phase propagates this random entrance value through the
macroscopic regime. The one-step quadratic-variation estimate of
Proposition~\ref{prop:QV-local-corr}, together with the a priori bounds of
Lemma~\ref{lem:uv-apriori}, yield (Lemma~\ref{lem:M-flat-away-origin}).
\[
        \sup_{k_0\leq k\leq(1-\varepsilon)N}
        \left|M_k^{(N)}-M_{k_0}^{(N)}\right|
        \xrightarrow[N\to\infty]{L^2}
        0.
\]
Obtaining this estimate is the main
technical effort of the proof. It contrasts with the \(N\)-scale regime of a
macroscopic initial condition studied in~\cite{CoronLeJan2025}, where much
rougher bounds suffice.

Proposition~\ref{prop:Cbar-remainder-variance} shows in parallel that the
accumulated centred remainder in the \(\overline C\)-decomposition is
negligible. Hence, throughout the macroscopic phase,
\[
        M_k^{(N)}\approx W,
        \qquad
        X_k=a_{k,N}^{-1}M_k^{(N)}
        \approx
        \frac{2W}{\sqrt{\pi k}}.
\]
Summing the drift of \(\overline C\) in Equation \eqref{eq-decompo-C-strategy} therefore gives, heuristically,
\[
        \overline C_k
        \approx
        \frac{W}{2N}
        \sum_{j\leq k}a_{j,N}^{-1}
        \approx
        \frac{2W}{\sqrt\pi}\frac{\sqrt{k}}{N}.
\]
Thus, for \(k\sim yN\),
\[
        C_k=(N-k)\overline C_k
        \approx
        \frac{2}{\sqrt\pi}
        W\sqrt N\,\sqrt y\,(1-y),
\]
while using Equation\eqref{eq-X-strategy} gives
\[
        B_k
        =
        k(\overline C_k+X_k)
        \approx
        \frac{2}{\sqrt\pi}
        W\sqrt N\,\sqrt y\,(1+y).
\]
This is precisely the deterministic profile appearing in Theorem~\ref{thm:main};
the rigorous uniform approximation is established in
Proposition~\ref{prop:deterministic-propagation}.

It remains to connect the bulk approximation to the endpoints. Near \(0\),
Lemma~\ref{lem:initial-window} shows that both contributions are negligible on
the \(\sqrt N\)-scale before the mesoscopic entrance level. Near \(1\), the
average disadvantaged contribution is no longer sufficiently controlled to
continue the quadratic-variation argument. A separate terminal estimate,
Lemma~\ref{lem:terminal-window}, shows instead that \(B_k\) varies little and
that \(C_k\) remains small near fixation. Combining these endpoint estimates
with Proposition~\ref{prop:deterministic-propagation} yields directly the
uniform approximation
\[
        \sup_{0\le y\le1}
        \left|
        \frac1{\sqrt N}
        \binom{B_{k_N(y)}}{C_{k_N(y)}}
        -
        M_{k_0}^{(N)}
        \binom{b(y)}{c(y)}
        \right|
        \xrightarrow[N\to\infty]{\mathbb P}0.
\]
The theorem then follows from the convergence of the entrance value
\(M_{k_0}^{(N)}\) and Slutsky's lemma.

\begin{remark}
\label{rem:Kurtz-method}
The macroscopic phase could also be treated as in \cite{CoronLeJan2025},
using a fluid-limit argument in the spirit of Kurtz~\cite{kurtz1970solutions}.
Indeed, setting
\[
    \left(
        \frac{Y_{\lfloor Nt\rfloor}}{N},
        \frac{B_{\lfloor Nt\rfloor}}{\sqrt N},
        \frac{C_{\lfloor Nt\rfloor}}{\sqrt N}
    \right)
    \approx
    \bigl(y(t),b(t),c(t)\bigr),
\]
the conditional drift identities lead to the closed system
\[
\left\{
\begin{aligned}
    \dot y
        &= y,\\
    \dot b
        &= \frac12\bigl((1+y)b+yc\bigr),\\
    \dot c
        &= \frac{1-y}{2}b
           -\frac{y(3-y)}{2(1-y)}c,
\end{aligned}
\right.
\]
A direct fluid approximation from our initial state would, however, be
degenerate: a single initially advantaged individual corresponds in the
limit to the initial condition $(y,b,c)=(0,0,0)$, whose solution remains
identically zero. One could instead start the fluid approximation once
the advantaged population has reached a fixed proportion $a>0$, as in
\cite{CoronLeJan2025}. From that point on, concentration indeed holds and could be proved
by quadratic-variation estimates of the same type as those used below; using $y$ as the time variable, this system then yields
the same deterministic profiles as those we obtain. 
This would still leave the separate problem of gluing the fluid regime
to the microscopic random seeding phase near $y=0$.

We find the martingale $M_k^{(N)}$ more convenient for our purpose,
since the same quantity captures the random amplitude created in the
initial regime and transports it through the macroscopic one.
\end{remark}

\section{Embedded dynamics and the fundamental martingale}
\label{sec:bookkeeping}

All quantities depend on $N$, but this dependence is suppressed unless it is needed.

Generic finite constants are denoted by $K$, or by $K_\varepsilon$ when they may depend on $\varepsilon$, and may change from line to line.

\subsection{One-step and embedded dynamics}
For $Y_n<N$, set
\begin{equation*}
        \overline B_n:=\frac{B_n}{Y_n},
        \qquad
        \overline C_n:=\frac{C_n}{N-Y_n},
        \qquad
        X_n:=\overline B_n-\overline C_n.
\end{equation*}

The quantities $\overline{B}_n$ and $\overline{C}_n$ are respectively equal to the mean genetic contribution of the initial ancestor carried by advantaged (resp. disadvantaged) individuals living at time $n$. The quantity $X_n$ therefore measures a form of imbalance between the proportion of genome of advantaged vs disadvantaged individuals living at time $n$ coming from the initially advantaged individual. This quantity is initially equal to $1$ and due to the even parental genome transmission we expect this difference to tend to go to $0$. The following propositions give first the autonomous dynamics of the number of advantaged individuals $(Y_n)_{n\geq0}$ and second that of $(B_n)_{n\geq0}$, $(C_n)_{n\geq0}$ and the dynamics of the expectation of their rescaled versions $(\overline{B}_n)_{n\geq0}$, $(\overline{C}_n)_{n\geq0}$ and of the difference $(X_n)_{n\geq0}$. The two propositions are proved together for more simplicity.

\begin{proposition}[Autonomous dynamics of $Y$]\label{prop:fund-dynamics-Y}
Let \(w_n(i):=A_n(i,1)\). Conditionally on \(\mathcal F_n\), as long as $Y_n<N$,
\[
        \mathbb P(Y_{n+1}=Y_n+1\mid\mathcal F_n)=\frac{Y_n}{N},
        \qquad
        \mathbb P(Y_{n+1}=Y_n\mid\mathcal F_n)=1-\frac{Y_n}{N}.
\]
Consequently, for \(1\le k\le N-1\),
\[
        S_{k+1}-S_k-1
        \sim \operatorname{Geom}_{\mathbb Z_+}\left(\frac{k}{N}\right)
\] conditionally on \(\mathcal F_{S_k}\).
\end{proposition}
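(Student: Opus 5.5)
The plan is to read the law of \(Y_{n+1}-Y_n\) directly off the one-step sampling mechanism. Then we use the strong Markov property at the stopping time \(S_k\) to obtain the geometric law of the waiting times.

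\textbf{Step 1: one-step transition.} Fix \(n\) with \(Y_n<N\). By construction, \(\kappa_n\) is uniform on \(\mathcal D_n\), so it is a disadvantaged individual, and it is replaced by a child who inherits the type of \(\mu_n\). The new class is therefore \(\mathcal Y_{n+1}=\mathcal Y_n\cup\{\kappa_n\}\) if \(\mu_n\in\mathcal Y_n\), and \(\mathcal Y_{n+1}=\mathcal Y_n\) otherwise. In the first case \(\kappa_n\notin\mathcal Y_n\), so \(Y_{n+1}=Y_n+1\). In the second case the child is disadvantaged, \(\kappa_n\) was already disadvantaged, and \(Y_{n+1}=Y_n\). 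Thus
\[
\{Y_{n+1}=Y_n+1\}=\{\mu_n\in\mathcal Y_n\}.
\]
Conditionally on \(\mathcal F_n\), the parent \(\mu_n\) is uniform on \(I_N\) and independent of \(\mathcal F_n\), while \(\mathcal Y_n\) is \(\mathcal F_n\)-measurable. Hence the event has conditional probability \(Y_n/N\), and its complement has probability \(1-Y_n/N\). The variables \(\pi_n\) and \(\kappa_n\) play no role.

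\textbf{Step 2: geometric holding times.} Fix \(1\le k\le N-1\). On the block starting at \(S_k\), we have \(Y_n=k\) as long as no increment has occurred. Write \(G:=S_{k+1}-S_k-1\). Then
\[
\{G\ge m\}=\bigcap_{j=0}^{m-1}\{\mu_{S_k+j}\notin\mathcal Y_{S_k+j}\},
\]
and on this event \(\mathcal Y_{S_k+j}=\mathcal Y_{S_k}\). We condition successively on \(\mathcal F_{S_k+j}\) (formally, we sum over the values \(\{S_k=s\}\in\mathcal F_s\), or invoke the strong Markov property). By Step 1, each factor contributes probability \(1-k/N\), which gives
\[
\mathbb P(G\ge m\mid\mathcal F_{S_k})=(1-k/N)^m .
\]
This is exactly the survival function of \(\operatorname{Geom}_{\mathbb Z_+}(k/N)\). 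Since \(k/N>0\), it also shows \(S_{k+1}<\infty\) almost surely.

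\textbf{Main obstacle.} The only point requiring care is the conditioning at the random time \(S_k\) in Step 2. I will handle it by decomposing over the events \(\{S_k=s\}\) and using the fact that \((\mu_{s+j})_{j\ge0}\) is i.i.d. uniform and independent of \(\mathcal F_s\). This reduces the claim to a product of the deterministic-time computations from Step 1.
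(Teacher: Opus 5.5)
Your proof is correct and follows the paper's argument. The paper likewise identifies $\{Y_{n+1}=Y_n+1\}$ with the event that the mother $\mu_n$ is advantaged, which has conditional probability $Y_n/N$, and then simply asserts the geometric law of $S_{k+1}-S_k-1$; your decomposition over $\{S_k=s\}$ (using that $(\mu_{s+j})_{j\ge0}$ is i.i.d.\ uniform and independent of $\mathcal F_s$, and that $\mathcal Y$ stays equal to $\mathcal Y_{S_k}$ on $\{G\ge j\}$) just spells out the stopping-time step the paper leaves implicit.
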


Once $Y_n=N$ the model all individuals are advantaged and the model becomes the neutral version studied in \cite{CoronLeJan2022}. Here we do not consider the dynamics of the considered processes after that time.

\begin{proposition}[One-step dynamics of $B$ and $C$]\label{prop:fund-dynamics-BC}

Let us set $Y_n=k$ and $d=N-Y_n=N-k$. On the stable event
\(\{Y_{n+1}=Y_n=k\}\), one has
\[
        B_{n+1}=B_n,
        \qquad
        C_{n+1}
        =
        C_n+\frac{w_n(\mu_n)+w_n(\pi_n)}2-w_n(\kappa_n),
\]
where, conditionally on \(\mathcal F_n\) and on this event, \(\mu_n\) and
\(\kappa_n\) are uniform on the disadvantaged class, while \(\pi_n\) is uniform
on \(I_N\). In particular,
\begin{equation}\label{eq-Cbar-stable-exp}
        \mathbb E[
        \overline C_{n+1}-\overline C_n
        \mid \mathcal F_n,\,Y_{n+1}=Y_n=k]
        =
        \frac{k}{2Nd}X_n,
\end{equation}
and
\[
        \mathbb E[
        X_{n+1}
        \mid \mathcal F_n,\,Y_{n+1}=Y_n=k]
        =
        \left(1-\frac{k}{2Nd}\right)X_n .
\]

On the jump event \(\{Y_{n+1}=Y_n+1\}\), one has
\[
        B_{n+1}
        =
        B_n+\frac{w_n(\mu_n)+w_n(\pi_n)}2,
        \qquad
        C_{n+1}=C_n-w_n(\kappa_n),
\]
where, conditionally on \(\mathcal F_n\) and on this event, \(\mu_n\) is
uniform on the advantaged class, \(\kappa_n\) is uniform on the disadvantaged
class, and \(\pi_n\) is uniform on \(I_N\). In particular, if \(k\le N-2\), then
\begin{equation}\label{eq-Cbar-jump-exp}
        \mathbb E[
        \overline C_{n+1}-\overline C_n
        \mid \mathcal F_n,\,Y_{n+1}=Y_n+1=k+1]
        =
        0,
\end{equation}
and
\[
        \mathbb E[
        X_{n+1}
        \mid \mathcal F_n,\,Y_{n+1}=Y_n+1=k+1]
        =
        \left(1-\frac{d}{2N(k+1)}\right)X_n .
\]
\end{proposition}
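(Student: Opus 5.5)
The plan is to condition on $\mathcal F_n$ and split according to the type of the mother $\mu_n$. Proposition~\ref{prop:fund-dynamics-Y} should then follow at once, and each identity of Proposition~\ref{prop:fund-dynamics-BC} reduces to a one-line expectation computation.

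\textbf{The dynamics of $Y$.} Since the child inherits the mother's type and $\kappa_n$ is always disadvantaged while $Y_n<N$, we have $Y_{n+1}=Y_n+1$ if and only if $\mu_n\in\mathcal Y_n$. This event has conditional probability $Y_n/N$. Consequently, on the block after $S_k$ the successive trials are conditionally i.i.d. Bernoulli$(k/N)$, because the fresh triples $(\mu_\ell,\pi_\ell,\kappa_\ell)$ are independent of the past. Applying this at the stopping time $S_k$ gives $S_{k+1}-S_k-1\sim\operatorname{Geom}_{\mathbb Z_+}(k/N)$.

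\textbf{Conditional laws and the update rule.} Since $\mu_n,\pi_n,\kappa_n$ are conditionally independent, conditioning on $\{\mu_n\in\mathcal D_n\}$ or on $\{\mu_n\in\mathcal Y_n\}$ makes $\mu_n$ uniform on the corresponding class and leaves the laws of $\pi_n$ and $\kappa_n$ unchanged. By~\eqref{eq:dyn-A}, only row $\kappa_n$ changes, and it receives the weight $(w_n(\mu_n)+w_n(\pi_n))/2$, computed from time-$n$ values. This holds even when $\mu_n=\kappa_n$.

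\textbf{Stable event.} The newborn is disadvantaged and replaces a disadvantaged individual, so $B$ is unchanged and $C$ gains the child's weight minus $w_n(\kappa_n)$. We use
\[
\mathbb E[w_n(\mu_n)]=\mathbb E[w_n(\kappa_n)]=\overline C_n,
\qquad
\mathbb E[w_n(\pi_n)]=\frac{B_n+C_n}{N}=\frac{k\overline B_n+d\overline C_n}{N}.
\]
From these,
\[
\mathbb E[C_{n+1}-C_n\mid\cdot]=\frac12\left(\frac{k\overline B_n+d\overline C_n}{N}-\overline C_n\right)=\frac{k}{2N}X_n .
\]
The denominator $d$ is unchanged, so dividing by $d$ gives~\eqref{eq-Cbar-stable-exp}. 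Since $\overline B_n$ is also unchanged, the formula for $X$ follows.

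\textbf{Jump event.} Now $\mu_n$ is uniform on $\mathcal Y_n$, so
\[
\mathbb E[B_{n+1}\mid\cdot]=B_n+\frac12\left(\overline B_n+\frac{k\overline B_n+d\overline C_n}{N}\right).
\]
Likewise $\mathbb E[C_{n+1}\mid\cdot]=C_n-\overline C_n=(d-1)\overline C_n$. Dividing by $d-1\ge1$, which needs $k\le N-2$, gives $\overline C$-drift $0$, i.e.~\eqref{eq-Cbar-jump-exp}. For $X$, divide the $B$-expectation by $k+1$ and subtract $\overline C_n$. The numerator becomes
\[
kX_n+\tfrac12 X_n+\frac{k}{2N}X_n
=(k+1)X_n-\frac{N-k}{2N}X_n ,
\]
so $\mathbb E[X_{n+1}\mid\cdot]=\bigl(1-\frac{d}{2N(k+1)}\bigr)X_n$.

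There is no real obstacle. The only points needing care are that conditioning on the jump or stable event does not bias $\pi_n$ or $\kappa_n$, and the restriction $k\le N-2$, which keeps $\overline C_{n+1}$ well defined.
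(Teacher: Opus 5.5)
Your proposal is correct and follows essentially the same route as the paper. You condition on the type of $\mu_n$, use the conditional independence of $(\mu_n,\pi_n,\kappa_n)$ to identify the conditional laws, and compute the one-step conditional expectations of $C$ and $B$ directly; your combined numerator computation for $X_{n+1}$ on the jump event is only a rearrangement of the paper's separate computation of the $\overline B$-drift and the zero $\overline C$-drift.
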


\begin{proof}
The event \(Y_{n+1}=Y_n+1\) occurs exactly when the mother \(\mu_n\) is advantaged, which has conditional probability \(Y_n/N\). Otherwise the number of advantaged individuals is unchanged. This also gives the conditional geometric law of \(S_{k+1}-S_k-1\).

The formulae for \(B_{n+1}\) and \(C_{n+1}\) follow directly from the update rule
for \(A_n\) given in Equation \eqref{eq:dyn-A} and the definitions of $B_n$ and $C_n$ in Equation \eqref{def-BC}.

On a stable step, $Y_{n+1}=k$ and $N-Y_{n+1}=d$, and
\[
        \overline C_{n+1}-\overline C_n
        =
        \frac1d
        \left(
        \frac{w_n(\mu_n)+w_n(\pi_n)}2-w_n(\kappa_n)
        \right).
\]
Taking conditional expectations gives
\[
\begin{aligned}
        \mathbb E[
        \overline C_{n+1}-\overline C_n
        \mid \mathcal F_n,\,Y_{n+1}=Y_n=k]
        &=
        \frac1d
        \left(
        \frac12\overline C_n
        +\frac12\frac{k\overline B_n+d\overline C_n}{N}
        -\overline C_n
        \right)                                      \\
        &=
        \frac{k}{2Nd}
        \left(\overline B_n-\overline C_n\right)
        =
        \frac{k}{2Nd}X_n.
\end{aligned}
\]
Since \(B_{n+1}=B_n\) on a stable step, \(\overline B_{n+1}=\overline B_n\),
and therefore
\[
        \mathbb E[
        X_{n+1}
        \mid \mathcal F_n,\,Y_{n+1}=Y_n=k]
        =
        \left(1-\frac{k}{2Nd}\right)X_n .
\]

On a jump step, assuming \(k\le N-2\), the disadvantaged average satisfies
\[
        \overline C_{n+1}-\overline C_n
        =
        \frac{\overline C_n-w_n(\kappa_n)}{d-1}.
\]
Since \(\kappa_n\) is uniform on the disadvantaged class, its conditional
expectation is zero. For the advantaged average,
\[
        \overline B_{n+1}
        =
        \frac{k}{k+1}\overline B_n
        +
        \frac{w_n(\mu_n)+w_n(\pi_n)}{2(k+1)}.
\]
Thus, taking conditional expectations gives
\[
\begin{aligned}
        \mathbb E[
        \overline B_{n+1}-\overline B_n
        \mid \mathcal F_n,\,Y_{n+1}=Y_n+1=k+1]
        &=
        -\frac1{k+1}\overline B_n
        +
        \frac1{2(k+1)}
        \left(
        \overline B_n
        +
        \frac{k\overline B_n+d\overline C_n}{N}
        \right)                                      \\
        &=
        -\frac{d}{2N(k+1)}
        \left(\overline B_n-\overline C_n\right).
\end{aligned}
\]
Combining this with the zero drift of \(\overline C\) at the jump gives
\[
        \mathbb E[
        X_{n+1}
        \mid \mathcal F_n,\,Y_{n+1}=Y_n+1]
        =
        \left(1-\frac{d}{2N(k+1)}\right)X_n .
\]
\end{proof}

Denote by
\[
        \alpha_{k,N}
        :=
        \frac{2Nk+N+k}{(2N+1)(k+1)}
        =
        \frac{k+1-b_N}{k+1},
        \qquad
        b_N:=\frac{N+1}{2N+1},
\]
and set
\begin{equation}\label{def:akN}
        a_{1,N}:=1,
        \qquad
        a_{k,N}
        :=
        \prod_{\ell=1}^{k-1}\alpha_{\ell,N}^{-1},
        \qquad
        2\leq k\leq N.
\end{equation}

Recall the hitting times \(S_k\) introduced in Section~\ref{sec:model&result}.
From now on, an index \(n\) refers to calendar time, whereas an index \(k\)
refers to the embedded chain observed when the advantaged population has size
\(k\). Thus
\[
        B_k:=B_{S_k},
        \qquad
        C_k:=C_{S_k},
        \qquad
        1\leq k\leq N,
\]
and, for \(1\leq k\leq N-1\),
\[
        \overline B_k:=\frac{B_k}{k},
        \qquad
        \overline C_k:=\frac{C_k}{N-k},
        \qquad
        X_k:=\overline B_k-\overline C_k.
\]
Accordingly, set
\begin{equation}\label{def-M}
        M_k^{(N)}:=a_{k,N}X_k,
        \qquad
        1\leq k\leq N-1.
\end{equation}

Thus \(B_k\) and \(C_k\) always denote the values at the hitting time
\(S_k\), rather than at deterministic calendar time \(k\).

For later use, we also introduce the block notations
\begin{equation}\label{eq:notations}
        d:=N-k,
        \qquad
        q:=\frac dN,
        \qquad
        \lambda:=\frac{k}{2Nd},
        \qquad
        L:=S_{k+1}-S_k-1.
\end{equation}
Thus, \(L\) is the number of stable steps before the jump \(k\to k+1\), and,
conditionally on \(\mathcal F_{S_k}\), from Proposition \ref{prop:fund-dynamics-Y},
\[
        \mathbb P(L=\ell\mid\mathcal F_{S_k})
        =
        q^\ell(1-q),
        \qquad
        \ell\geq0.
\]
On \(\{L\geq j\}\), let \(\overline C^{(j)}\) and \(X^{(j)}\) denote the
values of \(\overline C\) and \(X\) after the first \(j\) stable steps of the block (i.e. at time $S_k+j$), and set
\[
        C^{(j)}:=d\,\overline C^{(j)}.
\]
In particular,
\[
        \overline C^{(0)}=\overline C_k,
        \qquad
        X^{(0)}=X_k,
        \qquad
        C^{(0)}=C_k.
\]

\begin{proposition}[Embedded drift structure and fundamental martingale]
\label{prop:M}
For every \(N\geq3\) and \(1\leq k\leq N-2\),
\begin{equation}
\label{eq:embedded-drift}
        \mathbb E\left[
        \begin{pmatrix}
        X_{k+1}\\
        \overline C_{k+1}
        \end{pmatrix}
        \,\middle|\,
        \mathcal F_{S_k}
        \right]
        =
        \begin{pmatrix}
        \alpha_{k,N} & 0\\[1mm]
        (2N+1)^{-1} & 1
        \end{pmatrix}
        \begin{pmatrix}
        X_k\\
        \overline C_k
        \end{pmatrix}.
\end{equation}
Consequently, $\left(M_k^{(N)}\right)_{1\leq k\leq N-1}$ is a martingale with respect to \((\mathcal F_{S_k})_{1\leq k\leq N-1}\).

Moreover, setting
\[
        \Delta R_{k+1}
        :=
        \overline C_{k+1}
        -
        \overline C_k
        -
        \frac{1}{2N+1}X_k,
\]
one has the decomposition
\begin{equation}
\label{eq:Cbar-drift-decomposition}
        \overline C_{k+1}
        =
        \overline C_k
        +
        \frac{1}{2N+1}X_k
        +
        \Delta R_{k+1}
        =
        \overline C_k
        +
        \frac{1}{2N+1}a_{k,N}^{-1}M_k^{(N)}
        +
        \Delta R_{k+1},
\end{equation}
where
\[
        \mathbb E\left[
        \Delta R_{k+1}
        \,\middle|\,
        \mathcal F_{S_k}
        \right]
        =
        0.
\]
\end{proposition}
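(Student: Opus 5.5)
The plan is to compute the conditional expectation over the block $[S_k,S_{k+1}]$ by decomposing along the number $L$ of stable steps, iterating the one-step identities of Proposition~\ref{prop:fund-dynamics-BC}. The key structural fact is the following. At each calendar time $n$ in the block, conditionally on $\mathcal F_n$, the step is stable with probability $q=d/N$ and is a jump with probability $1-q=k/N$, by Proposition~\ref{prop:fund-dynamics-Y}. The conditional expectations of $X_{n+1}$ and $\overline C_{n+1}$ given $\mathcal F_n$ and the type of the step are the linear functions of $(X_n,\overline C_n)$ given in Proposition~\ref{prop:fund-dynamics-BC}. Hence, writing $\mathbf 1_{\{L\ge j\}}$ for the event that the first $j$ steps of the block are stable (an event in $\mathcal F_{S_k+j}$), the tower property gives
\[
\mathbb E\bigl[X^{(j+1)}\mathbf 1_{\{L\ge j+1\}}\,\big|\,\mathcal F_{S_k}\bigr]=q(1-\lambda)\,\mathbb E\bigl[X^{(j)}\mathbf 1_{\{L\ge j\}}\,\big|\,\mathcal F_{S_k}\bigr].
\]
By induction this yields $\mathbb E[X^{(j)}\mathbf 1_{\{L\ge j\}}\mid\mathcal F_{S_k}]=q^j(1-\lambda)^jX_k$. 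All quantities are bounded, since $0\le w_n\le 1$ gives $|X|\le 1$ and $|\overline C|\le 1$, and $L$ has geometric tails. Therefore all the series below converge absolutely, and exchanging sums and expectations is justified by dominated convergence. I expect this bookkeeping over a block of random length to be the only delicate point: one must condition step by step rather than on $\{L=\ell\}$ globally, because the one-step formulas are stated conditionally on $\mathcal F_n$ and the type of the current step.

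For the first coordinate, the jump at the end of the block occurs at step $j$ with conditional probability $(1-q)$ on $\{L\ge j\}$. There it multiplies the expected value by $1-\frac{d}{2N(k+1)}$, and $\overline C$ has zero jump drift since $k\le N-2$. Summing over $j$ gives
\[
\mathbb E[X_{k+1}\mid\mathcal F_{S_k}]=\frac{1-q}{1-q(1-\lambda)}\Bigl(1-\frac{d}{2N(k+1)}\Bigr)X_k .
\]
Using $q\lambda=k/(2N^2)$ and $1-q=k/N$, the prefactor equals $2N/(2N+1)$. Multiplying out then gives
\[
\frac{2N(k+1)-d}{(2N+1)(k+1)}=\frac{2Nk+N+k}{(2N+1)(k+1)}=\alpha_{k,N}.
\]
This is the first row of \eqref{eq:embedded-drift}.

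For the second coordinate, the increment of $\overline C$ over the block is the sum of the stable-step increments plus the jump increment, and the jump increment has zero conditional mean by \eqref{eq-Cbar-jump-exp}. The $(j+1)$-th stable step occurs on $\{L\ge j+1\}$. By \eqref{eq-Cbar-stable-exp} and the display above, its expected contribution is $q\lambda\,q^j(1-\lambda)^jX_k$. Summing over $j\ge0$ gives
\[
\frac{q\lambda}{1-q(1-\lambda)}X_k=\frac{k/(2N^2)}{k/N+k/(2N^2)}X_k=\frac{1}{2N+1}X_k .
\]
This gives the second row of \eqref{eq:embedded-drift}, and it is equivalent to $\mathbb E[\Delta R_{k+1}\mid\mathcal F_{S_k}]=0$. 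The decomposition \eqref{eq:Cbar-drift-decomposition} is then just the definition of $\Delta R_{k+1}$, rewritten using $X_k=a_{k,N}^{-1}M_k^{(N)}$.

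Finally, for the martingale property, $a_{k+1,N}=a_{k,N}\alpha_{k,N}^{-1}$ by \eqref{def:akN}. Hence
\[
\mathbb E[M^{(N)}_{k+1}\mid\mathcal F_{S_k}]=a_{k+1,N}\,\alpha_{k,N}X_k=M^{(N)}_k
\]
for $1\le k\le N-2$. Integrability is immediate from $|X_k|\le1$, and adaptedness holds because $S_k$ is a stopping time.
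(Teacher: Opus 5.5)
Your proposal is correct and follows essentially the same route as the paper: you decompose the block $[S_k,S_{k+1}]$ according to the geometric number $L$ of stable steps, iterate the one-step drifts of Proposition~\ref{prop:fund-dynamics-BC}, sum the geometric series to obtain $\alpha_{k,N}$ and $(2N+1)^{-1}$, and conclude with $a_{k+1,N}=a_{k,N}\alpha_{k,N}^{-1}$. The only cosmetic differences are that you condition step by step and, for $\overline C$, sum the stable-step drifts $q\lambda\,\mathbb E[X^{(j)}\mathbf 1_{\{L\ge j\}}\mid\mathcal F_{S_k}]$ directly, whereas the paper uses that $\overline C$ and $X$ have opposite increments on stable steps and computes $-\mathbb E[X^{(L)}-X_k\mid\mathcal F_{S_k}]$; both give the same coefficient $q\lambda/(1-q(1-\lambda))=(2N+1)^{-1}$.
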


\begin{proof}
Fix \(1\leq k\leq N-2\). By Proposition~\ref{prop:fund-dynamics-Y},
conditionally on \(\mathcal F_{S_k}\),
\[
        \mathbb P(L=\ell\mid\mathcal F_{S_k})
        =
        q^\ell(1-q),
        \qquad
        \ell\geq0.
\]
By Proposition~\ref{prop:fund-dynamics-BC}, during each stable step,  \[
        \mathbb E[
        X_{n+1}
        \mid \mathcal F_n,\,Y_{n+1}=Y_n=k]
        =
        \left(1-\lambda\right)X_n,
\] while at the final jump \(k\to k+1\), \[
        \mathbb E[
        X_{n+1}
        \mid \mathcal F_n,\,Y_{n+1}=Y_n+1=k+1]
        =
        \left(1-\frac{d}{2N(k+1)}\right)X_n .
\]
Therefore
\begin{equation}\label{eq-rec-X}
\begin{aligned}
        \mathbb E\left[
        X_{k+1}
        \,\middle|\,
        \mathcal F_{S_k}
        \right]
        &=
        \sum_{\ell\geq0}
        q^\ell(1-q)
        (1-\lambda)^\ell
        \left(
        1-\frac{d}{2N(k+1)}
        \right)
        X_k                                      \\
        &=
        \alpha_{k,N}X_k.
\end{aligned}
\end{equation}

We next compute the drift of \(\overline C\). Since the event that the next update is stable is independent of the current genetic configuration, Proposition~\ref{prop:fund-dynamics-BC} gives, by induction,
\[
        \mathbb E\left[
        X^{(j)}
        \,\middle|\,
        \mathcal F_{S_k},\,L\geq j
        \right]
        =
        (1-\lambda)^jX_k.
\]
The final jump has zero conditional drift for \(\overline C\), therefore
\begin{equation}\label{eq-exp-DeltaC}
\begin{aligned}
        \mathbb E\left[
        \overline C_{k+1}-\overline C_k
        \,\middle|\,
        \mathcal F_{S_k}
        \right]
        &=\mathbb E\left[
        \overline C^{(L)}-\overline C_{k}
        \,\middle|\,
        \mathcal F_{S_k}
        \right]\\&=-\mathbb E\left[
        X^{(L)}-X_{k}
        \,\middle|\,
        \mathcal F_{S_k}
        \right]\\&=-
        \left[\left(\sum_{j\geq0}
        q^{j}(1-q)(1-\lambda)^j\right)-1\right]X_k\\
        &=
        \frac{q\lambda}{1-q(1-\lambda)}X_k\\
        &=
        \frac{1}{2N+1}X_k.
\end{aligned}
\end{equation}
Together with the preceding identity \eqref{eq-rec-X} for \(X_{k+1}\), this proves
\eqref{eq:embedded-drift}.

Finally, since
\[
        a_{k+1,N}
        =
        a_{k,N}\alpha_{k,N}^{-1},
\]
we obtain
\[
        \mathbb E\left[
        M_{k+1}^{(N)}
        \,\middle|\,
        \mathcal F_{S_k}
        \right]
        =
        a_{k+1,N}\alpha_{k,N}X_k
        =
        M_k^{(N)}
\]
therefore $(M_k)_{1\leq k\leq N-1}$ is a martingale with respect \((\mathcal F_{S_k})_{1\leq k\leq N-1}\). The centering of
\(\Delta R_{k+1}\) follows directly from \eqref{eq-exp-DeltaC} and the second part of the decomposition \eqref{eq:Cbar-drift-decomposition} follows from the definition \eqref{def-M}.
\end{proof}

The triangular form of \eqref{eq:embedded-drift} is the basic structural feature used throughout the proof: the conditional expectation of the imbalance \(X_k\) evolves autonomously after deterministic renormalization, while the dynamics of the disadvantaged average \(\overline C_k\) is driven by this imbalance \(X_k\).

\begin{remark}
The martingale $(M_k)_{1\leq k\leq N-1}$ is defined only up to \(k=N-1\), since \(\overline C_N\) is not defined at fixation. The terminal step
\(N-1\to N\) will be handled separately in Section~\ref{sec-gluing}.
\end{remark}

\subsection{Deterministic and first-moment estimates}

We first collect the estimates on the deterministic coefficients introduced
in \eqref{def:akN}.

\begin{lemma}[Deterministic estimates for \(a_{k,N}\)]
\label{lem:a-estimates}
There exists a constant \(K<\infty\) such that, for every \(N\ge2\)
and every \(1\le k\le N\),
\begin{equation}\label{2-sided-estimate-a}
        K^{-1}\sqrt{k}
        \le
        a_{k,N}
        \le
        K\sqrt{k}.
\end{equation}

Moreover, if \(k_0=k_0(N)\leq N\) satisfies \(k_0\to\infty\) when $N$ goes to infinity, then
\begin{equation}\label{uniform-asymptotics-a}
        \sup_{k_0\le k\le N}
        \left|
        \sqrt{k}\,a_{k,N}^{-1}
        -
        \frac{2}{\sqrt{\pi}}
        \right|
        \xrightarrow[N\to\infty]{}0.
\end{equation}
If in addition \(k_0/N\to0\) when $N$ goes to infinity, then
\begin{equation}\label{eq:sum_a-1}
        \sqrt{N}\sup_{k_0\le k\le N}
        \left|\frac{1}{2N+1}
        \sum_{j=k_0}^{k-1}a_{j,N}^{-1}-\frac{2}{\sqrt{\pi}}\frac{\sqrt{k}}{N}\right|\longrightarrow 0 \quad\quad\text{as $N\rightarrow\infty$.}
\end{equation}
\end{lemma}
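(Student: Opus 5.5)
We write $a_{k,N}$ as a ratio of Gamma functions and then compare Gamma ratios with powers. Since $\alpha_{\ell,N}=(\ell+1-b_N)/(\ell+1)$, we have
\[
        a_{k,N}=\prod_{\ell=1}^{k-1}\frac{\ell+1}{\ell+1-b_N}
        =\frac{\Gamma(k+1)\,\Gamma(2-b_N)}{\Gamma(2)\,\Gamma(k+1-b_N)}.
\]
Here $b_N=(N+1)/(2N+1)\in(1/2,2/3]$ for $N\ge1$, and $b_N\to1/2$ with $b_N-1/2=\frac{1}{2(2N+1)}$.

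\textbf{Two-sided estimate.} For the bound \eqref{2-sided-estimate-a}, we use Wendel's (or Gautschi's) inequality for $s\in[0,1]$,
\[
        x^{s}(1+x)^{-(1-s)}\cdots,
\]
or more simply the uniform statement that $\Gamma(x+1)/\Gamma(x+1-b)\asymp (x+1)^{b}$ uniformly in $x\ge1$ and $b\in[1/2,2/3]$. This gives $a_{k,N}\asymp k^{b_N}$. Then $k^{b_N}=\sqrt k\,k^{b_N-1/2}$ with $1\le k^{b_N-1/2}\le N^{1/(2(2N+1))}\le K$, since $k\le N$. This yields \eqref{2-sided-estimate-a}. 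The point is that the exponent error $b_N-\frac12=O(1/N)$ costs only a bounded factor over the range $k\le N$.

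\textbf{Uniform asymptotics.} For \eqref{uniform-asymptotics-a}, Stirling's formula gives $\Gamma(k+1)/\Gamma(k+1-b)=k^{b}(1+O(1/k))$ uniformly in $b\in[1/2,2/3]$. Also $\Gamma(2-b_N)\to\Gamma(3/2)=\sqrt\pi/2$, and $k^{b_N-1/2}=\exp\bigl(O(\log N/N)\bigr)\to1$ uniformly in $k\le N$. Hence
\[
        \sqrt k\,a_{k,N}^{-1}=\frac{2}{\sqrt\pi}\bigl(1+O(1/k_0)+o(1)\bigr)
\]
uniformly for $k\ge k_0$.

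\textbf{Sum estimate.} For \eqref{eq:sum_a-1}, write $a_{j,N}^{-1}=\frac{2}{\sqrt\pi}j^{-1/2}(1+\varepsilon_{j,N})$, where $\sup_{j\ge k_0}|\varepsilon_{j,N}|=:\eta_N\to0$ by the previous step. Since $\sum_{j=k_0}^{k-1}j^{-1/2}=2\sqrt k-2\sqrt{k_0}+O(k_0^{-1/2})$, we get
\[
        \frac{1}{2N+1}\sum_{j=k_0}^{k-1}a_{j,N}^{-1}
        =\frac{2}{\sqrt\pi}\,\frac{2\sqrt k}{2N+1}
        +O\!\Bigl(\frac{\sqrt{k_0}}{N}\Bigr)
        +O\!\Bigl(\frac{\eta_N\sqrt k}{N}\Bigr).
\]
Replacing $2/(2N+1)$ by $1/N$ costs $O(\sqrt k/N^2)$. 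Multiplying by $\sqrt N$ and taking the supremum over $k\le N$, the error terms are respectively $O(\sqrt{k_0/N})\to0$ (using $k_0/N\to0$), $O(\eta_N)\to0$, and $O(N^{-1})$.

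\textbf{Main obstacle.} The main point needing care is the uniformity in the Stirling/Gamma-ratio step with respect to the moving parameter $b_N$. The key observation is that $b_N-1/2=O(1/N)$ makes $k^{b_N-1/2}$ uniformly close to $1$ for $k\le N$. I would handle the Gamma ratio by the standard expansion $\log\Gamma(x+1)-\log\Gamma(x+1-b)=b\log x+O(1/x)$, which holds with a constant uniform for $b$ in a compact subset of $[0,1]$.
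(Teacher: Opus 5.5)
Your proof is correct and follows the paper's argument essentially step by step. It uses the same Gamma-function representation of $a_{k,N}$ and a Gautschi-type bound $\Gamma(k+1)/\Gamma(k+1-b)=k^{b}(1+O(k^{-1}))$, uniform in $b$. It uses the same observation that $k^{b_N-1/2}=\exp\bigl(\log k/(4N+2)\bigr)$ is uniformly close to $1$ for $k\le N$, and the same integral comparison for the sum. The only blemish is the unfinished Wendel display. You should replace it by Gautschi's inequality $k^{b}\le\Gamma(k+1)/\Gamma(k+1-b)\le(k+1)^{b}$ for $b\in[0,1]$, which gives your uniform statement directly.
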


\begin{proof}
From \eqref{def:akN}, \[ a_{k,N} = \Gamma(2-b_N) \frac{\Gamma(k+1)}{\Gamma(k+1-b_N)}, \qquad b_N=\frac12+\frac{1}{4N+2}. \]
By Gautschi's inequality \cite{Gautschi1959}, for every
\(k\ge1\) and \(0<b<1\),
\[
1\le \frac{\Gamma(k+1)}{k^b\Gamma(k+1-b)}
\le (1+k^{-1})^b\le 1+k^{-1}.
\]
Consequently, since \(b_N\in[1/2,3/5]\),
\[
        \frac{\Gamma(k+1)}{\Gamma(k+1-b_N)}
        =
        k^{b_N}\bigl(1+O(k^{-1})\bigr),
\]
where the implicit constant is independent of \(N\). In particular, this
ratio is bounded above and below by positive constant multiples of
\(k^{b_N}\), uniformly in \(N\ge2\) and \(k\ge1\). Moreover,
\(\Gamma(2-b_N)\) is bounded above and away from zero, while
\[
        k^{b_N}
        =
        \sqrt{k}\exp\left(\frac{\log k}{4N+2}\right)
\]
and, for \(k\le N\),
\[
        1
        \le
        \exp\left(\frac{\log k}{4N+2}\right)
        \le
        \exp\left(\frac{\log N}{4N+2}\right).
\]
The last expression is bounded uniformly in \(N\ge2\), and
\eqref{2-sided-estimate-a} follows.

Moreover,
\[
        \sqrt{k}\,a_{k,N}^{-1}
        =
        \frac{k^{1/2-b_N}}{\Gamma(2-b_N)}
        \bigl(1+O(k^{-1})\bigr).
\]
Uniformly for \(k_0\le k\le N\),
\[
        \left|\log k^{1/2-b_N}\right|
        =
        \frac{\log k}{4N+2}
        \le
        \frac{\log N}{4N+2}
        \longrightarrow0,
\]
and the \(O(k^{-1})\) term is \(o(1)\) since \(k_0\to\infty\). Finally,
\[
        \Gamma(2-b_N)
        \longrightarrow
        \Gamma\left(\frac32\right)
        =
        \frac{\sqrt{\pi}}{2}.
\]
This proves \eqref{uniform-asymptotics-a}.

Set
\[
        \varepsilon_N
        :=
        \sup_{k_0\le j\le N}
        \left|
        \sqrt{j}\,a_{j,N}^{-1}
        -
        \frac{2}{\sqrt{\pi}}
        \right|,
\]
so that \(\varepsilon_N\to0\). Uniformly for \(k_0\le k\le N\),
\[
        \left|
        \sum_{j=k_0}^{k-1}a_{j,N}^{-1}
        -
        \frac{2}{\sqrt{\pi}}
        \sum_{j=k_0}^{k-1}j^{-1/2}
        \right|
        \le
        \varepsilon_N
        \sum_{j=k_0}^{k-1}j^{-1/2}
        \le
        2\varepsilon_N\sqrt{N}.
\]
Furthermore, by comparison with the corresponding integral,
\[
        \sum_{j=k_0}^{k-1}j^{-1/2}
        =
        2\bigl(\sqrt{k}-\sqrt{k_0}\bigr)
        +O(k_0^{-1/2})
\]
uniformly in \(k\ge k_0\). Consequently,
\[
\begin{aligned}
        \sqrt{N}\sup_{k_0\le k\le N}
        \left|
        \frac{1}{2N+1}
        \sum_{j=k_0}^{k-1}a_{j,N}^{-1}
        -
        \frac{2}{\sqrt{\pi}}\frac{\sqrt{k}}{N}
        \right|
        \le
        C\varepsilon_N
        +C\sqrt{\frac{k_0}{N}}
        +\frac{C}{N}
        +\frac{C}{\sqrt{Nk_0}},
\end{aligned}
\]
which tends to zero because \(k_0/N\to0\). This proves
\eqref{eq:sum_a-1}.
\end{proof}

\begin{lemma}[First-moment calculations and bounds]
\label{lem:uv-apriori}
There exists a constant \(K<\infty\) such that, for every
\(1\le k\le N-1\),
\begin{equation}\label{eq-esp-X-control}
        \mathbb E[X_k]
        =
        a_{k,N}^{-1}
        \le
        \frac{K}{\sqrt{k}},
\end{equation}
while
\[
        \mathbb E[\overline C_k]=\frac{ka_{k,N}^{-1}-1}{N}
        \qquad\text{and}\qquad
        \qquad
        \mathbb E[\overline B_k]=\frac{(N+k)a_{k,N}^{-1}-1}{N}.
\] In particular,
\[
        \mathbb E[\overline C_k]\le K\frac{\sqrt{k}}{N},
        \qquad
        \mathbb E[\overline B_k+\overline C_k]
        \le
        \frac{K}{\sqrt{k}},
        \qquad\text{and}\qquad
        \mathbb E[B_k+C_k]
        \le
        K\sqrt{k}.
\]
\end{lemma}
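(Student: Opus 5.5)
The plan is to take expectations in the embedded drift identity of Proposition~\ref{prop:M} and solve the resulting triangular linear recursion explicitly, starting from the initial values at \(k=1\). At \(S_1=0\) we have \(A_0=\mathrm{Id}\) and \(\mathcal Y_0=\{1\}\). Hence \(B_1=1\) and \(C_1=0\), so that \(\overline B_1=1\), \(\overline C_1=0\) and \(X_1=1\).

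\textbf{Step 1: the imbalance \(X_k\).} Since \(M^{(N)}\) is a martingale with \(M_1^{(N)}=a_{1,N}X_1=1\), we get \(\mathbb E[M_k^{(N)}]=1\) for all \(1\le k\le N-1\). Therefore
\[
\mathbb E[X_k]=a_{k,N}^{-1}\mathbb E[M_k^{(N)}]=a_{k,N}^{-1},
\]
and the upper bound \(K/\sqrt k\) follows from \eqref{2-sided-estimate-a} of Lemma~\ref{lem:a-estimates}. This gives \eqref{eq-esp-X-control}.

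\textbf{Step 2: the disadvantaged average \(\overline C_k\).} Taking expectations in \eqref{eq:Cbar-drift-decomposition} gives \(\mathbb E[\overline C_{k+1}]=\mathbb E[\overline C_k]+(2N+1)^{-1}a_{k,N}^{-1}\) for \(1\le k\le N-2\). Summing from \(\overline C_1=0\) yields
\[
\mathbb E[\overline C_k]=\frac{1}{2N+1}\sum_{j=1}^{k-1}a_{j,N}^{-1}.
\]
To reach the closed form \((ka_{k,N}^{-1}-1)/N\), it suffices to verify by induction the telescoping identity
\[
\frac{(k+1)a_{k+1,N}^{-1}-ka_{k,N}^{-1}}{N}=\frac{a_{k,N}^{-1}}{2N+1}.
\]
Using \(a_{k+1,N}^{-1}=\alpha_{k,N}a_{k,N}^{-1}\), the left side equals \(a_{k,N}^{-1}\bigl((k+1)\alpha_{k,N}-k\bigr)/N\). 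Moreover \((k+1)\alpha_{k,N}-k=1-b_N=N/(2N+1)\), which gives the identity. The base case \(k=1\) holds because \(a_{1,N}=1\) gives \((1\cdot 1-1)/N=0\).

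\textbf{Step 3: the advantaged average and the bounds.} By definition \(\overline B_k=X_k+\overline C_k\), so
\[
\mathbb E[\overline B_k]=a_{k,N}^{-1}+\frac{ka_{k,N}^{-1}-1}{N}=\frac{(N+k)a_{k,N}^{-1}-1}{N}.
\]
The corollary bounds then follow from \eqref{2-sided-estimate-a}.
\begin{itemize}
\item \(\mathbb E[\overline C_k]\le ka_{k,N}^{-1}/N\le K\sqrt k/N\).
\item \(\mathbb E[\overline B_k+\overline C_k]\le (N+2k)a_{k,N}^{-1}/N\le 3K/\sqrt k\).
\item \(\mathbb E[B_k+C_k]=k\,\mathbb E[\overline B_k]+(N-k)\mathbb E[\overline C_k]\le K'(\sqrt k+\sqrt k)\).
\end{itemize}

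The only delicate point is the range of validity. Proposition~\ref{prop:M} requires \(k\le N-2\), and the recursion produces values up to index \(N-1\), which is exactly the range \(1\le k\le N-1\) in the statement, so nothing more is needed. No real obstacle is expected; the main care is checking the algebraic identity \((k+1)\alpha_{k,N}-k=N/(2N+1)\).
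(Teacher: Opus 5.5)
Your proof is correct and follows the paper's argument essentially step for step. You obtain $\mathbb E[X_k]=a_{k,N}^{-1}$ from $\mathbb E[M_k^{(N)}]=M_1^{(N)}=1$, sum the expected drift of $\overline C$ starting from $\overline C_1=0$, close the sum with the same telescoping identity $(k+1)\alpha_{k,N}-k=1-b_N=N/(2N+1)$, and then read off $\mathbb E[\overline B_k]$ and all the bounds from $\overline B_k=X_k+\overline C_k$ and the two-sided estimate on $a_{k,N}$.
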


\begin{proof}
By Proposition~\ref{prop:M},
\[
        \mathbb E[M_k^{(N)}]
        =
        \mathbb E[M_1^{(N)}]
        =
        1.
\]
Since \(M_k^{(N)}=a_{k,N}X_k\),
\[
        \mathbb E[X_k]
        =
        a_{k,N}^{-1}.
\]
The inequality given in \eqref{eq-esp-X-control} therefore follows directly from Lemma~\ref{lem:a-estimates}.

Since \(\overline C_1=0\), taking expectations in
\eqref{eq:Cbar-drift-decomposition} and summing gives
\[
        \mathbb E[\overline C_k]
        =
        \frac{1}{2N+1}
        \sum_{j=1}^{k-1}a_{j,N}^{-1}.
\]
What is more by definition, $$a_{k+1,N}^{-1}=\frac{k+1-b_N}{k+1}a_{k,N}^{-1}\qquad \text{which gives} \qquad\frac{a_{k,N}^{-1}}{2N+1}=\frac{k+1}{N}a_{k+1,N}^{-1}-\frac{k}{N}a_{k,N}^{-1}.$$
Hence
$$\mathbb E[\overline C_k]
    =\frac{ka_{k,N}^{-1}-1}{N}
    \le
    K\frac{\sqrt{k}}{N}.$$
Next since $\overline B_k=X_k+\overline C_k,$ $$\mathbb E[\overline B_k]=\frac{(N+k)a_{k,N}^{-1}-1}{N}.$$
and we obtain using \eqref{eq-esp-X-control} that
\[
        \mathbb E[\overline B_k+\overline C_k]
        \le
        \frac{K}{\sqrt{k}}.
\]
Finally, using the non-negativity of the class averages $\overline{B}_k$ and $\overline{C}_k$,
\[
\begin{aligned}
        \mathbb E[B_k+C_k]
        &=
        k\,\mathbb E[\overline B_k]
        +(N-k)\mathbb E[\overline C_k] \\
        &\le
        K\sqrt{k}
        +K\sqrt{k},
\end{aligned}
\]
which proves the last inequality of Lemma \ref{lem:uv-apriori} after changing \(K\).
\end{proof}

\section{Initial window and random recursive tree coupling}\label{sec:initial-window}

The random amplitude is generated while the advantaged population is still small. The key observation is that, if paternal genetic input is discarded, each new advantaged individual inherits half the contribution of its advantaged mother, while that mother is uniform among the current advantaged individuals. The resulting maternal genealogy is therefore a random recursive tree. We first analyse the corresponding weighted tree, and then show that the paternal contribution is negligible on the mesoscopic scale i.e. as long as $k\ll N$.

\subsection{Random recursive tree (RRT)}

Let \((\mathcal T_k)_{k\ge1}\) be the random recursive tree rooted at \(1\): for each
\(k\ge2\), vertex \(k\) is attached to a parent \(U_k\) chosen uniformly in
\(\{1,\ldots,k-1\}\), independently conditionally on the past. We denote by
\(\operatorname{depth}_k(v)\) the distance from \(v\) to the root in \(\mathcal T_k\), and set
\begin{equation}\label{eq:RRT-weight}
        B_k^\star
        :=
        \sum_{v\in\mathcal T_k}2^{-\operatorname{depth}_k(v)},
        \qquad
        \gamma_k
        :=
        \prod_{\ell=1}^{k-1}
        \left(1+\frac1{2\ell}\right),
        \qquad
        M_k^\star:=\frac{B_k^\star}{\gamma_k},
\end{equation}
where the empty product is understood to be equal to \(1\), so that \(\gamma_1=1\).

\begin{remark}
    This martingale is a particular instance of martingales studied by Drmota and Hwang~\cite[Section~3.3]{DrmotaHwang2005}, see also Fuchs, Hwang and Neininger~\cite[Equation~(4) and Proposition~1]{FuchsHwangNeininger2006} for the characterisation of their limit. We nevertheless retain our original derivation below, both to keep the paper self-contained and to establish the properties in the precise form needed here.
\end{remark}

\begin{proposition}[RRT martingale, $L^2$ boundedness and limit]\label{prop:RRT-mart}
The process \((M_k^\star)_{k\ge1}\) is a nonnegative martingale, and
\(\mathbb E[M_k^\star]=1\) for every \(k\ge1\). What is more, \[
\sup_{k\ge 1}\mathbb{E}\bigl[(M_k^\star)^2\bigr]<\infty.
\]
Consequently \(M_k^\star\to M_\infty^\star\) almost surely and in \(L^2\), and $\mathbb{E}[M_\infty^\star]=1.$
\end{proposition}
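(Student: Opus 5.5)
Let \(\mathcal G_k:=\sigma(U_2,\ldots,U_k)\). The plan is to compute the first two conditional moments of \(B_{k+1}^\star\) given \(\mathcal G_k\), and then apply the standard martingale convergence theorem.

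\emph{Martingale property.} When vertex \(k+1\) attaches to \(U_{k+1}\), uniform on \(\{1,\ldots,k\}\), it has depth \(\operatorname{depth}_k(U_{k+1})+1\). The existing depths are unchanged, so
\[
B_{k+1}^\star=B_k^\star+\tfrac12\,2^{-\operatorname{depth}_k(U_{k+1})}.
\]
Taking the conditional expectation gives \(\mathbb E[B_{k+1}^\star\mid\mathcal G_k]=B_k^\star(1+\frac1{2k})\). Since \(\gamma_{k+1}=\gamma_k(1+\frac1{2k})\), the process \(M^\star\) is a martingale. It is nonnegative, and \(M_1^\star=1\) yields \(\mathbb E[M_k^\star]=1\).

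\emph{\(L^2\) bound.} Introduce the auxiliary quantity \(Q_k:=\sum_{v\in\mathcal T_k}4^{-\operatorname{depth}_k(v)}\le B_k^\star\). The increment of \(B^\star\) is \(\Delta_k:=\frac12 2^{-\operatorname{depth}_k(U_{k+1})}\), and its conditional second moment is
\[
\mathbb E[\Delta_k^2\mid\mathcal G_k]=\frac{Q_k}{4k}\le\frac{B_k^\star}{4k}.
\]
Hence the conditional variance satisfies
\[
\operatorname{Var}(M_{k+1}^\star\mid\mathcal G_k)\le\gamma_{k+1}^{-2}\,\mathbb E[\Delta_k^2\mid\mathcal G_k]\le \frac{B_k^\star}{4k\gamma_{k+1}^2}.
\]
Taking expectations, with \(\mathbb E[B_k^\star]=\gamma_k\), gives
\[
\mathbb E[(M_{k+1}^\star)^2]-\mathbb E[(M_k^\star)^2]\le \frac{\gamma_k}{4k\gamma_{k+1}^2}\le \frac{1}{4k\gamma_k}.
\]
Now \(\gamma_k=\Gamma(k+\frac12)/(\Gamma(\frac32)\Gamma(k))\asymp\sqrt k\), by the same Gautschi-type estimate used in Lemma~\ref{lem:a-estimates}. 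The increments are therefore \(O(k^{-3/2})\), which is summable, and \(\sup_k\mathbb E[(M_k^\star)^2]<\infty\). The bound \(Q_k\le B_k^\star\) is crude but sufficient; no separate recursion for \(Q_k\) is needed.

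\emph{Convergence.} An \(L^2\)-bounded martingale converges almost surely and in \(L^2\) by Doob's theorem. \(L^2\) convergence implies \(L^1\) convergence, so \(\mathbb E[M_\infty^\star]=\lim_k\mathbb E[M_k^\star]=1\).

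The only step needing care is the \(L^2\) bound. It rests on \(\gamma_k\) growing like \(\sqrt k\), which makes \(\sum_k 1/(k\gamma_k)\) finite; this growth comes directly from the Gamma-ratio asymptotics.
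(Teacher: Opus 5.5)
Your proposal is correct and follows the paper's proof essentially step for step: the same one-step computation giving $\mathbb E[B^\star_{k+1}\mid\mathcal T_k]=(1+\frac1{2k})B^\star_k$, the same auxiliary sum $\sum_v 4^{-\operatorname{depth}_k(v)}\le B^\star_k$, the same increment bound $1/(4k\gamma_k)=O(k^{-3/2})$, and Doob's $L^2$ convergence theorem. The only cosmetic difference is in the second-moment step. You bound the conditional variance of the martingale increment, whereas the paper expands $(B^\star_{k+1})^2$ and uses $1+\frac1k\le(1+\frac1{2k})^2$; both amount to discarding the same nonnegative term $(B_k^\star)^2/(4k^2\gamma_{k+1}^2)$.
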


We first prove the following lemma giving the asymptotics of $\gamma_k$.

\begin{lemma}[Asymptotics of \(\gamma_k\)]\label{lem:gamma-asymptotics}
As \(k\to\infty\),
\[
        \frac{\gamma_k}{\sqrt k}
        \longrightarrow
        \frac{2}{\sqrt\pi}.
\]
\end{lemma}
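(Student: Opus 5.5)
We write \(\gamma_k\) as a ratio of Gamma functions and then use the standard asymptotics for such ratios, the same way \(a_{k,N}\) was handled in Lemma~\ref{lem:a-estimates}. Since \(1+\frac1{2\ell}=\frac{\ell+1/2}{\ell}\), the product telescopes in Gamma form:
\[
        \gamma_k
        =
        \prod_{\ell=1}^{k-1}\frac{\ell+\tfrac12}{\ell}
        =
        \frac{\Gamma(k+\tfrac12)}{\Gamma(\tfrac32)\,\Gamma(k)}.
\]
This identity follows from \(\Gamma(x+1)=x\Gamma(x)\) applied to the numerator and denominator separately: \(\prod_{\ell=1}^{k-1}(\ell+\frac12)=\Gamma(k+\frac12)/\Gamma(\frac32)\) and \(\prod_{\ell=1}^{k-1}\ell=\Gamma(k)\).

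Next we use the asymptotics \(\Gamma(k+\frac12)/\Gamma(k)\sim\sqrt k\). This can be quoted from Stirling's formula. Alternatively, Gautschi's inequality (already used in Lemma~\ref{lem:a-estimates}) gives it with \(x=k-\frac12\) and \(s=\frac12\):
\[
        \Bigl(k-\tfrac12\Bigr)^{1/2}\le \frac{\Gamma(k+\frac12)}{\Gamma(k)}\le \Bigl(k+\tfrac12\Bigr)^{1/2}.
\]
Dividing by \(\sqrt k\), both bounds tend to \(1\).

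Finally, \(\Gamma(\frac32)=\frac{\sqrt\pi}{2}\). Therefore \(\gamma_k/\sqrt k\to 1/\Gamma(\frac32)=2/\sqrt\pi\). This also matches the constant \(2/\sqrt\pi\) in \eqref{uniform-asymptotics-a}: formally, \(\gamma_k\) is the \(b=\frac12\) analogue of \(a_{k,N}\) with the roles of numerator and denominator swapped, so the constants should agree.

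None of these steps is a real obstacle. The only thing to check carefully is the telescoping identity, in particular the boundary factors \(\Gamma(\frac32)\) and \(\Gamma(1)=1\) and the index range \(\ell=1,\dots,k-1\).
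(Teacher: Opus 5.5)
Your proposal is correct and follows the paper's proof: you write \(\gamma_k=\Gamma(k+\frac12)/\bigl(\Gamma(\frac32)\Gamma(k)\bigr)\) and apply Stirling's formula. Your Gautschi sandwich \(\bigl(k-\frac12\bigr)^{1/2}\le\Gamma(k+\frac12)/\Gamma(k)\le\bigl(k+\frac12\bigr)^{1/2}\) is a valid, slightly more explicit way to obtain the same asymptotics.
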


\begin{proof}
Using the Gamma function,
\[
        \gamma_k
        =
        \prod_{\ell=1}^{k-1}
        \left(1+\frac1{2\ell}\right)
        =
        \frac{\Gamma(k+\frac12)}
        {\Gamma(\frac32)\Gamma(k)}
        =
        \frac{2}{\sqrt\pi}
        \frac{\Gamma(k+\frac12)}{\Gamma(k)}.
\]
The result follows from Stirling's formula.
\end{proof}

\begin{proof}[Proof of Proposition~\ref{prop:RRT-mart}]
Let \(\mathcal H_k:=\sigma(\mathcal T_k)\), and let \(U_{k+1}\) be the parent of vertex
\(k+1\). Then
\[
        B_{k+1}^\star=B_k^\star+\xi_{k+1},
        \qquad
        \xi_{k+1}:=2^{-(\operatorname{depth}_k(U_{k+1})+1)}.
\]
Conditionally on \(\mathcal H_k\), \(U_{k+1}\) is uniform on \(\mathcal T_k\), hence
\[
        \mathbb E[\xi_{k+1}\mid\mathcal H_k]
        =
        \frac1k\sum_{v\in\mathcal T_k}
        2^{-(\operatorname{depth}_k(v)+1)}
        =
        \frac{1}{2k}B_k^\star.
\]
Therefore
\[
        \mathbb E[B_{k+1}^\star\mid\mathcal H_k]
        =
        \left(1+\frac1{2k}\right)B_k^\star,
\]
and the definition of \(\gamma_k\) gives
\[
        \mathbb E[M_{k+1}^\star\mid\mathcal H_k]=M_k^\star.
\]
Since \(M_1^\star=1\), the first sentence of Proposition~\ref{prop:RRT-mart} is proved. Now let
\[
R_k^\star:=\sum_{v\in\mathcal T_k}4^{-\mathrm{depth}_k(v)}.
\]
Then
\[
\mathbb{E}[\xi_{k+1}^2\mid\mathcal H_k]
=
\frac{1}{4k}R_k^\star,
\]
therefore
\[
\mathbb{E}\bigl[(B_{k+1}^\star)^2\mid\mathcal H_k\bigr]
=
(B_k^\star)^2\Bigl(1+\frac1k\Bigr)
+
\frac{1}{4k}R_k^\star.
\]
Since
\[
\gamma_{k+1}=\gamma_k\Bigl(1+\frac{1}{2k}\Bigr),
\]
we get
\[
\mathbb{E}\bigl[(M_{k+1}^\star)^2\mid\mathcal H_k\bigr]
=
(M_k^\star)^2
\frac{1+\frac1k}{(1+\frac{1}{2k})^2}
+
\frac{1}{4k}
\frac{R_k^\star}{\gamma_k^2(1+\frac{1}{2k})^2}.
\]
Now
\[
1+\frac1k\le \Bigl(1+\frac{1}{2k}\Bigr)^2,
\qquad \text{ and } \qquad
R_k^\star\le B_k^\star.
\]
Thus, after taking expectations,
\[
\mathbb{E}\bigl[(M_{k+1}^\star)^2\bigr]
\le
\mathbb{E}\bigl[(M_k^\star)^2\bigr]
+
\frac{1}{4k}\,
\mathbb{E}\left[\frac{B_k^\star}{\gamma_k^2}\right].
\]
Since \(B_k^\star=\gamma_k M_k^\star\) and
\(\mathbb{E}[M_k^\star]=1\),
\[
\mathbb{E}\bigl[(M_{k+1}^\star)^2\bigr]
\le
\mathbb{E}\bigl[(M_k^\star)^2\bigr]
+
\frac{1}{4k\gamma_k}.
\]
By Lemma \ref{lem:gamma-asymptotics}, there exists a constant \(K<\infty\) such that
\[
\frac1{\gamma_k}\le \frac{K}{\sqrt{k}},
\qquad \text{for every}\qquad k\ge1.
\]
Consequently
\[
\mathbb{E}\bigl[(M_{k+1}^\star)^2\bigr]
\le
\mathbb{E}\bigl[(M_k^\star)^2\bigr]
+
\frac{K}{k^{3/2}}.
\]
Iterating gives
\[
\sup_{n\ge1}\mathbb{E}\bigl[(M_n^\star)^2\bigr]
\le
\mathbb{E}\bigl[(M_1^\star)^2\bigr]
+
K\sum_{k\ge1}\frac1{k^{3/2}}
<\infty.
\]
Since \((M_k^\star)_{k\ge1}\) is an \(L^2\)-bounded martingale, it converges almost surely and in \(L^2\) to a limit \(M_\infty^\star\). Finally, \(L^1\)-convergence gives
\[
\mathbb{E}[M_\infty^\star]
=
\lim_{k\to\infty}\mathbb{E}[M_k^\star]
=
1.
\]
\end{proof}

\begin{proposition}[Smoothing equation for the RRT limit]
\label{prop:RRT-smoothing}
The limiting random variable $M^{\star}_{\infty}$ is the unique $L^2$ solution with mean $1$ of the equation in law
\begin{equation}
\label{eq:W-smoothing}
        W
        \stackrel d=
        \sqrt U\,W^{(1)}
        +
        \frac12\sqrt{1-U}\,W^{(2)},
\end{equation}
where \(U\sim\operatorname{Unif}[0,1]\), and
\(W^{(1)},W^{(2)}\) are independent copies of \(W\), independent of \(U\).
What is more, this solution satisfies $M^{\star}_{\infty}>0$ almost surely.
\end{proposition}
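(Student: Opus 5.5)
The plan has three parts: show that \(M_\infty^\star\) satisfies the equation by splitting the tree at the root, then prove uniqueness by a contraction argument, then prove positivity.

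\textbf{The limit satisfies the smoothing equation.} We decompose the random recursive tree at the first child of the root. Vertex \(2\) is attached to vertex \(1\). For \(k\ge2\), let \(\mathcal T_k^{(2)}\) be the subtree rooted at \(2\), and let \(\mathcal T_k^{(1)}\) be the tree obtained by removing that subtree. The sizes \((|\mathcal T_k^{(1)}|,|\mathcal T_k^{(2)}|)\) evolve as a P\'olya urn started from \((1,1)\): a new vertex chooses its parent uniformly, so it joins a part with probability proportional to that part's size. Hence \(|\mathcal T_k^{(1)}|/k\to U\) almost surely, with \(U\) uniform on \([0,1]\). Conditionally on the sequence of sizes, each part, relabelled in order of arrival, is itself a random recursive tree, and the two parts are independent. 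Depths in \(\mathcal T^{(2)}\) are shifted by \(1\), so
\[
B_k^\star=B^{\star,(1)}_{|\mathcal T_k^{(1)}|}+\tfrac12 B^{\star,(2)}_{|\mathcal T_k^{(2)}|}.
\]
Dividing by \(\gamma_k\) and writing each term as \(M^{\star,(i)}_{m}\cdot\gamma_m/\gamma_k\), Lemma~\ref{lem:gamma-asymptotics} gives \(\gamma_m/\gamma_k\to\sqrt{m/k}\) whenever \(m\to\infty\); this holds almost surely because \(0<U<1\) almost surely. Proposition~\ref{prop:RRT-mart}, applied to each independent copy, then yields
\[
M_\infty^\star=\sqrt U\,M^{(1)}+\tfrac12\sqrt{1-U}\,M^{(2)}
\]
almost surely, with \(M^{(1)},M^{(2)}\) independent copies of \(M^\star_\infty\), independent of \(U\). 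The main point to justify is independence of the copies from \(U\). I would handle it by the standard realization of the random recursive tree via the urn: the internal structure of each part is driven by randomness independent of the size sequence. The limit is in \(L^2\) with mean \(1\) by Proposition~\ref{prop:RRT-mart}.

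\textbf{Uniqueness.} I would use a contraction argument in the Wasserstein metric \(d_2\) on laws with finite second moment and mean \(1\). Let \(T\) be the map \(\mathcal L(W)\mapsto\mathcal L(\sqrt U W^{(1)}+\frac12\sqrt{1-U}W^{(2)})\). First, \(T\) preserves the mean: \(\mathbb E[\sqrt U]+\frac12\mathbb E[\sqrt{1-U}]=\frac23+\frac13=1\). Next, couple two inputs optimally and use the same \(U\) for both. Because the means agree, the cross terms vanish, and
\[
d_2(T\mu,T\nu)^2\le \bigl(\mathbb E[U]+\tfrac14\mathbb E[1-U]\bigr)d_2(\mu,\nu)^2=\tfrac58\,d_2(\mu,\nu)^2 .
\]
So \(T\) is a strict contraction, and it has at most one fixed point in this class. (Equivalently, one can subtract two solutions coupled in this way and iterate the resulting inequality.)

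\textbf{Positivity.} This is the step I expect to need the most care. Set \(p=\mathbb P(W=0)\). Since \(W\ge0\) and \(0<U<1\) almost surely, the equation gives \(W=0\) exactly when both copies vanish, so \(p=p^2\) and \(p\in\{0,1\}\). The mean-\(1\) condition excludes \(p=1\), so \(p=0\). For this to apply, the nonnegativity of \(M^\star_\infty\) must be used explicitly; it holds as an almost sure limit of the nonnegative \(M_k^\star\).
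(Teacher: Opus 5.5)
Your proposal is correct and follows essentially the paper's proof: the same split of the tree at vertex $2$, the same $W_2$-contraction with constant $\frac58$ for uniqueness, and the same $p=p^2$ argument for positivity. The only variation is in passing to the limit. You use the full P\'olya-urn realization, in which the size sequence is independent of the two relabelled subtree processes, to get an almost sure identity $M^\star_\infty=\sqrt U\,M^{(1)}+\frac12\sqrt{1-U}\,M^{(2)}$. The paper instead works with the fixed-$n$ identity in law, with $K_n$ uniform on $\{1,\ldots,n-1\}$, and re-realizes it via $\widetilde K_n=1+\lfloor (n-1)U\rfloor$; both routes are valid.
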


\begin{proof}
Remove the edge between vertices \(1\) and \(2\) in the recursive tree
\(\mathcal T_n\). This gives two rooted subtrees: the component containing
vertex \(1\), denoted by \(\mathcal T_n^{(1)}\), and the component rooted at
vertex \(2\), denoted by \(\mathcal T_n^{(2)}\). Let
\[
        K_n:=|\mathcal T_n^{(1)}|.
\]
By the standard Pólya-urn description of the two branches born from the first split (\cite{Mahmoud2008}),
\[
        K_n\sim\operatorname{Unif}\{1,\ldots,n-1\}.
\]
Moreover, conditionally on \(K_n=k\), the two rooted trees
\(\mathcal T_n^{(1)}\) and \(\mathcal T_n^{(2)}\), after relabelling their
vertices in increasing order, are independent recursive trees with respective
sizes \(k\) and \(n-k\).

The component containing vertex \(1\) keeps the same depths as in
\(\mathcal T_n\), whereas every vertex in the component rooted at vertex \(2\)
has its depth increased by one. Hence
\[
        B_n^\star
        \stackrel d=
        B_{K_n}^{\star,(1)}
        +
        \frac12 B_{n-K_n}^{\star,(2)},
\] where $B^{\star,(1)}$ and $B^{\star,(2)}$ are independent processes with same law as $B^{\star}$.
Therefore 
\begin{equation}
\label{eq:RDE}
        M_n^\star
        \stackrel d=
        \frac{\gamma_{K_n}}{\gamma_n}
        M_{K_n}^{\star,(1)}
        +
        \frac12
        \frac{\gamma_{n-K_n}}{\gamma_n}
        M_{n-K_n}^{\star,(2)},
\end{equation}
where the two RRT processes on the right-hand side are independent and
independent of \(K_n\).

We now realize the right-hand side of \eqref{eq:RDE} on a common probability space as follows.
Let \(U\sim\operatorname{Unif}[0,1]\), independent of the two RRT processes, and set
\[
    \widetilde K_n := 1+\lfloor (n-1)U\rfloor .
\]
For every \(n\), \(\widetilde K_n\) has the same law as \(K_n\), so that the right-hand side of
\eqref{eq:RDE} is unchanged in distribution when \(K_n\) is replaced by \(\widetilde K_n\).
Moreover,
\[
    \frac{\widetilde K_n}{n}\longrightarrow U
    \qquad\text{almost surely}.
\]
Since \(U\in(0,1)\) almost surely, both \(\widetilde K_n\) and \(n-\widetilde K_n\)
tend to infinity almost surely. Proposition~\ref{prop:RRT-mart}, applied to the two
independent RRT processes, gives
\[
    M^{\star,(1)}_m \longrightarrow M_{\infty}^{*,1},
    \qquad
    M^{\star,(2)}_m \longrightarrow M_{\infty}^{*,2}
    \qquad\text{almost surely as }m\to\infty,
\]
where $M_{\infty}^{*,1}$ and $M_{\infty}^{*,2}$ are independent copies of $M_{\infty}^{*}$, independent of \(U\).
Hence, along the random indices \(\widetilde K_n\) and \(n-\widetilde K_n\), and using
Lemma~\ref{lem:gamma-asymptotics},
\[
        \left(
        \frac{\gamma_{\widetilde K_n}}{\gamma_n},
        \frac{\gamma_{n-\widetilde K_n}}{\gamma_n},
        M_{\widetilde K_n}^{\star,(1)},
        M_{n-\widetilde K_n}^{\star,(2)}
        \right)
        \longrightarrow
        \left(
        \sqrt U,\sqrt{1-U},M_{\infty}^{*,1},M_{\infty}^{*,2}
        \right)
\]
almost surely. Therefore the right-hand side of \eqref{eq:RDE}, realized with
\(\widetilde K_n\) in place of \(K_n\), converges almost surely to
\[
        \sqrt U\,M_{\infty}^{*,1}
        +\frac12\sqrt{1-U}\,M_{\infty}^{*,2}.
\]
Since this realization has the same distribution as the right-hand side of
\eqref{eq:RDE}, while \(M_n^\star\Rightarrow M_{\infty}^{*}\) by Proposition~\ref{prop:RRT-mart}, we obtain that $M^*_{\infty}$ is solution to \eqref{eq:W-smoothing}.

We now prove uniqueness using a contraction argument. Let \(\Phi\) be the map on the set \(\mathcal P_2(\mathbb R)\) of probability measures on $\mathbb{R}$ with finite second moment defined by
\[
        \Phi(\mu)
        :=
        \mathcal L\left(
        \sqrt U\,X_1
        +
        \frac12\sqrt{1-U}\,X_2
        \right),
\]
where \(X_1,X_2\) are independent with law \(\mu\), independently of \(U\).
If \(\mu,\nu\in\mathcal P_2(\mathbb R)\) both have mean \(1\), take any
coupling \((X,Y)\) of \(\mu,\nu\), and two independent copies
\((X_1,Y_1),(X_2,Y_2)\), independent of \(U\). Since
\(\mathbb E[X-Y]=0\),
\[
\begin{aligned}
        &\mathbb E\left[
        \left(
        \sqrt U(X_1-Y_1)
        +
        \frac12\sqrt{1-U}(X_2-Y_2)
        \right)^2
        \right]
        \\
        &\qquad=
        \left(
        \mathbb E[U]
        +
        \frac14\mathbb E[1-U]
        \right)
        \mathbb E[(X-Y)^2]
        =
        \frac58\mathbb E[(X-Y)^2].
\end{aligned}
\]
Taking the infimum over all couplings gives
\[
        W_2^2(\Phi(\mu),\Phi(\nu))
        \le
        \frac58 W_2^2(\mu,\nu)
\] where $W_2$ denotes the $L^2$-Wasserstein distance on $\mathcal{P}_2(\mathbb{R})$. Thus \(\Phi\) has at most one fixed point in \(\mathcal P_2(\mathbb R)\) with mean \(1\). Since Proposition~\ref{prop:RRT-mart} gives that \(M_{\infty}^*\in L^2\) and \(\mathbb E[M_{\infty}^*]=1\), the first sentence of Proposition \ref{prop:RRT-smoothing} is proved.

Finally, \(M_{\infty}^*\ge0\) almost surely. Setting $p:=\mathbb P(M_{\infty}^*=0)$, the almost sure strict positivity of both coefficients $\sqrt{U}$ and $\sqrt{1-U}$ in \eqref{eq:W-smoothing} gives
\[p=p^2.\]
Since \(M_{\infty}^*\) is not identically zero (since $\mathbb{E}[M_{\infty}^*]=1$), \(p=0\), and hence \(M_{\infty}^*>0\) almost surely.
\end{proof}

\begin{remark}

Taking second moments in \eqref{eq:W-smoothing} together with the condition that $\mathbb{E}[W]=1$ gives
\[
        \mathbb E[W^2]
        =
        \left(
        \mathbb E[U]
        +
        \frac14\mathbb E[1-U]
        \right)
        \mathbb E[W^2]
        +
        \mathbb E\left[\sqrt{U(1-U)}\right].
\]
Since
\[
        \mathbb E[U]=\frac12,
        \qquad
        \mathbb E\left[\sqrt{U(1-U)}\right]=\frac{\pi}{8},
\]
we obtain
\[
        \mathbb E[W^2]=\frac{\pi}{3},
        \qquad
        \operatorname{Var}(W)=\frac{\pi}{3}-1.
\]

More generally, by homogeneity, for every \(m\ge0\), the unique \(L^2\)
solution of \eqref{eq:W-smoothing} with mean \(m\) is \(mW\).
Equation \eqref{eq:W-smoothing} belongs to the class of smoothing transforms,
or random weighted mean equations, studied by Liu~\cite{Liu01}. General
results from that work yield further information on the law of \(W\), which
we do not pursue here.
\end{remark}

\subsection{Paternal negligibility}

We now compare the initial advantaged genealogy with the RRT. For \(1\le k\le N-1\), set
\[
        T_k:=S_{k+1}-1.
\]
Since \(B\) is constant during the stable part of the block \([S_k,S_{k+1})\), its jump is
\begin{equation*}
        B_{k+1}-B_k
        =
        \frac12\left(
        A_{T_k}(\mu_{T_k},1)+A_{T_k}(\pi_{T_k},1)
        \right),
\end{equation*}
where \(\mu_{T_k}\) is the advantaged mother and \(\pi_{T_k}\) is the uniformly chosen father.

Define the maternal-only weights \(A^{(\mathrm m)}\) on the same pedigree by
\[
        A^{(\mathrm m)}_0(i,1):=\mathbf 1_{\{i=1\}},
\]
and
\begin{equation*}
        A^{(\mathrm m)}_{n+1}(i,1)
        =
        \begin{cases}
        A^{(\mathrm m)}_n(i,1), & i\neq \kappa_n,\\[3pt]
        \dfrac12 A^{(\mathrm m)}_n(\mu_n,1), & i=\kappa_n.
        \end{cases}
\end{equation*}
Let
\[
        B_k^{(\mathrm m)}
        :=
        \sum_{i\in\mathcal Y_{S_k}}A^{(\mathrm m)}_{S_k}(i,1).
\]

Thus \(A^{(\mathrm m)}\) is constructed on exactly the same population
pedigree as \(A\), but with all paternal genetic contributions discarded:
at each reproduction event, the newborn retains only the half of the
founder's contribution inherited from its mother. In particular, the
type dynamics and the set of advantaged individuals are unchanged.
Since advantaged individuals are never replaced and each
new advantaged individual is born from an advantaged mother, the maternal
genealogy of the advantaged class couples to a growing random recursive tree.

\begin{lemma}[Maternal-only coupling]\label{lem:maternal-equals-RRT}
For every \(1\le k\le N\),
\[
        0\le B_k^{(\mathrm m)}\le B_k.
\]
Moreover,
\[
        \bigl(B_k^{(\mathrm m)}\bigr)_{1\le k\le N}
        \stackrel d=
        \bigl(B_k^\star\bigr)_{1\le k\le N}.
\]
\end{lemma}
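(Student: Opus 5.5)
The plan has two parts: the pathwise inequality, obtained by induction, and the distributional identity, obtained by identifying the maternal genealogy of the advantaged class.

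\emph{Inequality.} I would prove by induction on $n$ the stronger pointwise statement $0\le A^{(\mathrm m)}_n(i,1)\le A_n(i,1)$ for every $i\in I_N$. At $n=0$ both weights equal $\mathbf 1_{\{i=1\}}$. For the induction step, only row $\kappa_n$ changes. There
\[
A^{(\mathrm m)}_{n+1}(\kappa_n,1)=\tfrac12A^{(\mathrm m)}_n(\mu_n,1)\le \tfrac12A_n(\mu_n,1)\le \tfrac12\bigl(A_n(\mu_n,1)+A_n(\pi_n,1)\bigr)=A_{n+1}(\kappa_n,1),
\]
where the last inequality uses $A_n\ge0$. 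Nonnegativity of $A^{(\mathrm m)}$ is clear. Summing over $i\in\mathcal Y_{S_k}$ then gives $0\le B^{(\mathrm m)}_k\le B_k$.

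\emph{Identification with the RRT.} Label advantaged individuals by their order of entry into the advantaged class: the founder is vertex $1$, and the newborn created at time $T_{k-1}=S_k-1$ is vertex $k$. Its mother $\mu_{T_{k-1}}$ is advantaged, so it is one of the vertices $1,\dots,k-1$; call it $U_k$. I would establish three facts.

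First, advantaged individuals are never replaced, since $\kappa_n\in\mathcal D_n$ as long as $Y_n<N$. Hence the weight $A^{(\mathrm m)}(v,1)$ of an advantaged vertex is frozen from its birth until $S_N$.

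Second, by induction on $k$, each vertex $v\le k$ satisfies $A^{(\mathrm m)}_{S_k}(v,1)=2^{-\operatorname{depth}(v)}$ in the tree whose edges are $k\to U_k$. Indeed, the newborn at a jump receives $\frac12A^{(\mathrm m)}(U_k,1)$. The maternal weights of disadvantaged individuals never enter this recursion, because advantaged newborns only ever look at their advantaged mother. Consequently $B^{(\mathrm m)}_k=\sum_{v\le k}2^{-\operatorname{depth}_k(v)}$, which is the functional $B^\star_k$ evaluated on this tree.

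Third, and this is the main step, the sequence $(U_k)_{2\le k\le N}$ must be shown to have the RRT law: $U_{k+1}$ is uniform on $\{1,\dots,k\}$ conditionally on $\mathcal F_{S_k}$ (which contains $U_2,\dots,U_k$). The hitting time $S_{k+1}$ is the first $n\ge S_k$ with $\mu_n\in\mathcal Y_n=\mathcal Y_{S_k}$. Since the $\mu_n$ are i.i.d.\ uniform on $I_N$, independent of $\mathcal F_n$ and of the other randomness, the first $\mu_n$ landing in the fixed set $\mathcal Y_{S_k}$ is uniform on that set. More formally, condition on $\mathcal F_{S_k}$ and on $\{L=\ell\}$, and use that given $\mu_n\in\mathcal Y_n$ the mother $\mu_n$ is uniform on $\mathcal Y_n$, which is Proposition~\ref{prop:fund-dynamics-BC}. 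The strong Markov property at $S_k$, combined with the fact that the intervening stable steps do not alter $\mathcal Y$, yields uniformity of $U_{k+1}$ independently of the past. This gives the joint law of $(U_2,\dots,U_N)$, and hence $(B^{(\mathrm m)}_k)_{k\le N}\stackrel d=(B^\star_k)_{k\le N}$, since both are the same measurable functional of the parent sequence.

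The only delicate point is the bookkeeping in this last conditioning argument. One must make sure the uniform choice of mother is independent of $\mathcal F_{S_k}$ and not merely of $\mathcal F_{S_{k+1}-1}$; iterating the conditional uniformity over the geometric number $L$ of stable steps handles this.
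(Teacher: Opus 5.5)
Your proposal is correct and follows essentially the same route as the paper: a pointwise induction for $0\le A^{(\mathrm m)}_n(i,1)\le A_n(i,1)$, then labelling advantaged individuals by order of appearance, noting that their maternal weights are frozen and halve along maternal edges (giving $2^{-\operatorname{depth}(v)}$), and that the mother producing each jump is uniform on the current advantaged class conditionally on the past, so the parent sequence has the RRT law. One harmless indexing slip: it is $S_{k+1}-1$, not $S_{k+1}$, that is the first $n\ge S_k$ with $\mu_n\in\mathcal Y_n$, which is what you correctly use elsewhere with $T_{k-1}=S_k-1$.
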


\begin{proof}
The inequality follows by induction from
\begin{equation*}
        0\le A^{(\mathrm m)}_n(i,1)\le A_n(i,1).
\end{equation*}

Now label the advantaged individuals by their order of appearance and attach each new individual to its advantaged mother. Fix \(1\le k\le N-1\) and write \(L_k:=S_{k+1}-S_k-1\). On \(\{L_k=\ell\}\), conditionally on \(\mathcal F_{S_k+\ell}\), the mother producing the jump \(k\to k+1\) is uniform on the \(k\) advantaged individuals. Hence, conditionally on the genealogy up to size \(k\), the parent of vertex \(k+1\) is uniform on \(\{1,\ldots,k\}\), exactly as in a random recursive tree.

Moreover, stable steps do not modify the maternal-only weights carried by advantaged individuals, while a child receives half the weight of its mother. Thus a vertex \(v\) carries weight
\(2^{-\operatorname{depth}(v)}\), and therefore
\begin{equation*}
        B^{(\mathrm m)}_k
        =
        \sum_{v\in\mathcal T_k}
        2^{-\operatorname{depth}(v)}.
\end{equation*}
Consequently,
\begin{equation*}
        \bigl(B^{(\mathrm m)}_k\bigr)_{1\le k\le N}
        \stackrel{d}{=}
        \bigl(B_k^*\bigr)_{1\le k\le N}.
\end{equation*}
\end{proof}

We shall use the following simple estimate on the disadvantaged mass immediately before a
jump.

\begin{lemma}\label{lem:C-before-jump}
There exists a constant \(K<\infty\), independent of \(N\) and \(k\), such that,
for all \(1\le k\le N-2\),
\[
        \mathbb E[C_{T_k}]
        \le
        K\sqrt{k+1}.
\]
\end{lemma}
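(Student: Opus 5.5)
We need to bound E[C_{T_k}], where T_k = S_{k+1}-1 is the last stable time of the block. Between S_k and T_k, Y stays equal to k. On each stable step, Proposition fund-dynamics-BC gives E[\overline C^{(j+1)}-\overline C^{(j)} | ...] = λ X^{(j)}, with λ = k/(2Nd), and E[X^{(j)} | F_{S_k}, L≥j] = (1-λ)^j X_k.

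I would first show that conditioning on the whole stable run does not change these conditional means. The event that a step is stable depends only on whether the mother is disadvantaged. On that event the mother is uniform on the disadvantaged class and the father and κ keep their laws, so the drift computation is unchanged. Hence, on {L = ℓ}, the conditional expectation of \overline C^{(ℓ)} given F_{S_k} and L = ℓ is
\[
\overline C_k+\sum_{j=0}^{\ell-1}\lambda(1-\lambda)^jX_k=\overline C_k+\bigl(1-(1-\lambda)^\ell\bigr)X_k\le \overline C_k+X_k^+ .
\]
To justify using the same conditional drift at every step, I would note that the decision "the next step is stable" is independent of the genetic configuration. This is the same induction argument already used in the proof of Proposition~\ref{prop:M}.

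Since C_{T_k} = d\,\overline C^{(L)} and d ≤ N, I would average over L and take expectations:
\[
\mathbb E[C_{T_k}] = d\,\mathbb E[\overline C_k] + d\,\mathbb E\bigl[(1-\mathbb E[(1-\lambda)^L])X_k\bigr].
\]
The first term is just E[C_k] ≤ K\sqrt k by Lemma~\ref{lem:uv-apriori}. For the second, the geometric law of L gives
\[
1-\mathbb E[(1-\lambda)^L]=\frac{q\lambda}{1-q(1-\lambda)}=\frac{N}{2N+1}\cdot\frac{1}{k}\cdot\frac{k}{N}\cdots ,
\]
and I would recompute this exactly. From \eqref{eq-exp-DeltaC} we have d(1-\mathbb E[(1-\lambda)^L]) = d/(2N+1) ≤ 1/2. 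The second term is therefore at most \tfrac12\mathbb E[X_k]. The one subtlety is that X_k can be negative: the coefficient d(1-\mathbb E[(1-\lambda)^L]) is deterministic, so the identity holds with X_k itself rather than X_k^+, and linearity applies directly. Lemma~\ref{lem:uv-apriori} then gives \mathbb E[X_k] = a_{k,N}^{-1} ≤ K/\sqrt k.

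Combining the two bounds, \mathbb E[C_{T_k}] ≤ K\sqrt k + K/\sqrt k ≤ K\sqrt{k+1} after enlarging K. The only step needing care is the first one: justifying that the per-step drift survives conditioning on L, equivalently that L is independent of the genetic increments given F_{S_k}. I would handle it by conditioning step by step on F_n and on the stability indicator, exactly as in Proposition~\ref{prop:M}.
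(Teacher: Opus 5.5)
Your proof is correct, but it runs in the opposite direction from the paper's.

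\textbf{The paper's route.} The paper looks only at the final jump of the block. On a jump step, $C$ changes only through the removal of a uniformly chosen disadvantaged individual, so
$\mathbb E[C_{S_{k+1}}\mid\mathcal F_{S_k+\ell},L=\ell]=(1-\frac1{N-k})C_{S_k+\ell}$.
Summing over $\ell$ gives
$\mathbb E[C_{T_k}]=\frac{N-k}{N-k-1}\mathbb E[C_{k+1}]\le 2\,\mathbb E[C_{k+1}]$,
and Lemma~\ref{lem:uv-apriori} at level $k+1$ finishes. This is where both the $\sqrt{k+1}$ and the restriction $k\le N-2$ come from. It is a one-line argument that needs no analysis of the stable phase.

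\textbf{Your route.} You go forward from $S_k$ through the stable run. This obliges you to justify that conditioning on $L$ does not bias the stable-step drift. That point is already handled in Proposition~\ref{prop:M} and Lemma~\ref{lem:stable-block-average}, so it is available. In exchange you obtain the exact identity
$\mathbb E[C_{T_k}\mid\mathcal F_{S_k}]=C_k+\frac{N-k}{2N+1}X_k$.
Nothing in this identity requires $k\le N-2$. It also covers $k=N-1$, where the paper instead falls back on the trivial bound $C_{T_k}\le 1$ in Lemma~\ref{lem:terminal-window}.

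\textbf{Two cosmetic fixes.} First, the displayed bound with $X_k^+$ is never used and can be dropped. Second, the half-written expression for $1-\mathbb E[(1-\lambda)^L]$ should simply be replaced by the value $\frac{q\lambda}{1-q(1-\lambda)}=\frac1{2N+1}$ from \eqref{eq-exp-DeltaC}. You then use that value correctly, together with the deterministic coefficient and $\mathbb E[X_k]=a_{k,N}^{-1}>0$.
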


\begin{proof}
Recall that $L:=S_{k+1}-S_k-1$.

For every \(\ell\ge0\), on \(\{L=\ell\}\) one has
\(T_k=S_k+\ell\). Conditionally on
\(\mathcal F_{S_k+\ell}\) and on \(\{L=\ell\}\), the final mother is
advantaged. From Proposition \ref{prop:fund-dynamics-BC} (Equation \eqref{eq-Cbar-jump-exp}),
\[
        \mathbb E\left[
        C_{S_{k+1}}
        \,\middle|\,
        \mathcal F_{S_k+\ell},\,L=\ell
        \right]
        =
        \left(1-\frac1{N-k}\right)C_{S_k+\ell}.
\]
Multiplying by \(\mathbf 1_{\{L=\ell\}}\), taking expectations, and
summing over \(\ell\ge0\), we obtain
\[
\begin{aligned}
        \mathbb E[C_{S_{k+1}}]
        &=
        \left(1-\frac1{N-k}\right)
        \sum_{\ell\ge0}
        \mathbb E\left[
        C_{S_k+\ell}\mathbf 1_{\{L=\ell\}}
        \right] \\
        &=
        \left(1-\frac1{N-k}\right)\mathbb E[C_{T_k}].
\end{aligned}
\]
Hence, since \(k\le N-2\),
\[
        \mathbb E[C_{T_k}]
        =
        \frac{N-k}{N-k-1}\mathbb E[C_{S_{k+1}}]
        \le
        2\,\mathbb E[C_{S_{k+1}}].
\]
The conclusion follows from Lemma~\ref{lem:uv-apriori}, after increasing
\(K\) if necessary.
\end{proof}

The following lemma shows that, up to any mesoscopic level \(k_0\ll N\), paternal genetic input is asymptotically negligible. Thus the genetic contribution carried by the advantaged class is, to first order, governed by the maternal-only dynamics.

\begin{lemma}[Negligibility of paternal input]\label{lem:paternal-error}
Let \(B^{(\mathrm m)}_k\) denote the maternal-only contribution. There exists
a constant \(K<\infty\) such that, for every sequence \(k_0=k_0(N)\) satisfying
\(k_0\to\infty\) and \(k_0/N\to0\),
\[
        \mathbb E\left[
        B_{k_0}-B^{(\mathrm m)}_{k_0}
        \right]
        \le
        K\frac{k_0^{3/2}}{N}.
\]
Consequently,
\[
        \frac{B_{k_0}-B^{(\mathrm m)}_{k_0}}{\sqrt{k_0}}
        \xrightarrow[N\to\infty]{\mathbb P}0 .
\]
\end{lemma}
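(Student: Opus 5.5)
We bound the difference $E_k:=B_k-B^{(\mathrm m)}_k\ge0$ (nonnegative by Lemma~\ref{lem:maternal-equals-RRT}) through its jumps. At the jump $k\to k+1$, the full weight increases by $\frac12\bigl(A_{T_k}(\mu_{T_k},1)+A_{T_k}(\pi_{T_k},1)\bigr)$. The maternal-only weight increases by $\frac12 A^{(\mathrm m)}_{T_k}(\mu_{T_k},1)$. Stable steps change neither quantity on the advantaged class, since advantaged individuals are never replaced. Hence
\[
E_{k+1}-E_k=\tfrac12\bigl(A_{T_k}(\mu_{T_k},1)-A^{(\mathrm m)}_{T_k}(\mu_{T_k},1)\bigr)+\tfrac12A_{T_k}(\pi_{T_k},1).
\]

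We take conditional expectations given $\mathcal F_{T_k}$ and the jump event. By Proposition~\ref{prop:fund-dynamics-BC}, the mother is then uniform on the $k$ advantaged individuals and the father is uniform on $I_N$. Since advantaged weights are frozen during the stable part of the block, we have $B_{T_k}=B_k$, and likewise for the maternal version. This gives
\[
\mathbb E[E_{k+1}-E_k\mid\cdot]=\frac{1}{2k}E_k+\frac{1}{2N}\bigl(B_k+C_{T_k}\bigr).
\]
We then take expectations and bound the forcing term. Lemma~\ref{lem:uv-apriori} gives $\mathbb E[B_k]\le K\sqrt k$, and Lemma~\ref{lem:C-before-jump} gives $\mathbb E[C_{T_k}]\le K\sqrt{k+1}$, valid for $k\le N-2$, which covers $k\le k_0$ for large $N$. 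Writing $e_k:=\mathbb E[E_k]$, we obtain
\[
e_{k+1}\le\Bigl(1+\frac1{2k}\Bigr)e_k+\frac{K\sqrt{k}}{N},\qquad e_1=0.
\]

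This linear recursion is solved exactly as in the RRT computation, by dividing by $\gamma_{k+1}=\gamma_k(1+\frac1{2k})$. The bound becomes
\[
\frac{e_{k+1}}{\gamma_{k+1}}\le\frac{e_k}{\gamma_k}+\frac{K\sqrt k}{N\gamma_{k+1}}.
\]
By Lemma~\ref{lem:gamma-asymptotics}, $\gamma_j\asymp\sqrt j$, so each increment is $O(1/N)$. Summing up to $k_0$ gives $e_{k_0}/\gamma_{k_0}\le Kk_0/N$, and hence $e_{k_0}\le Kk_0^{3/2}/N$, with $K$ independent of $N$ and of the sequence $k_0$.

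For the consequence, we use $E_{k_0}\ge0$ and Markov's inequality:
\[
\mathbb P\bigl(E_{k_0}/\sqrt{k_0}>\eta\bigr)\le Kk_0/(\eta N)\to0,
\]
since $k_0/N\to0$.

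The only delicate point is the bookkeeping in the conditional expectation of the jump. One must check that the conditioning on $\{L=\ell\}$ and $\mathcal F_{S_k+\ell}$ gives the stated uniform laws for $\mu$ and $\pi$. One must also check that the father's contribution involves the disadvantaged mass at $T_k$ rather than at $S_k$. This is exactly why Lemma~\ref{lem:C-before-jump} is needed, after summing over $\ell$ as in its proof.
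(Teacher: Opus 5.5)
Your proposal is correct and follows the paper's proof essentially step for step. It uses the same jump decomposition of $E_k=B_k-B^{(\mathrm m)}_k$, the same recursion $\mathbb E[E_{k+1}]\le(1+\frac1{2k})\mathbb E[E_k]+\frac{1}{2N}\mathbb E[B_k+C_{T_k}]$ with the forcing term controlled by Lemmas~\ref{lem:uv-apriori} and~\ref{lem:C-before-jump}, the same normalization by $\gamma_{k+1}$, and Markov's inequality for the consequence.
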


\begin{proof}
Set
\[
        E_k:=B_k-B_k^{(\mathrm m)}.
\]
Stable steps do not modify the contribution carried by advantaged
individuals. Hence, at the jump from \(k\) to \(k+1\),
\[
\begin{aligned}
        E_{k+1}-E_k
        &=
        \frac12\left(
        A_{T_k}(\mu_{T_k},1)
        -
        A_{T_k}^{(\mathrm m)}(\mu_{T_k},1)
        \right)
        +
        \frac12 A_{T_k}(\pi_{T_k},1).
\end{aligned}
\]
Conditionally on the pre-jump configuration, the mother is uniform on the \(k\) advantaged individuals, whereas the father is uniform on the whole population. Therefore
\[
        \mathbb E\left[
        E_{k+1}
        \,\middle|\,
        \mathcal F_{S_k}
        \right]
        =
        \left(1+\frac{1}{2k}\right)E_k
        +
        \frac{1}{2N}
        \mathbb E\left[
        B_k+C_{T_k}
        \,\middle|\,
        \mathcal F_{S_k}
        \right].
\]
Now from Lemma~\ref{lem:uv-apriori} and Lemma~\ref{lem:C-before-jump}, there exists \(K<\infty\) such that, uniformly in \(1\le k<N-1\),
\begin{equation*}
        \mathbb E\left[
        B_k + C_{T_{k}}
        \right]
        \le K\sqrt{k}.
\end{equation*}
Taking expectations, we now deduce that
\begin{equation}\label{eq:D-recursion}
        \mathbb E[E_{k+1}]
        \le
        \left(1+\frac{1}{2k}\right)\mathbb E[E_k]
        +
        K\frac{\sqrt{k}}{N}.
\end{equation}
Recall that $\gamma_k=
        \prod_{\ell=1}^{k-1}
        \left(1+\frac1{2\ell}\right)$ from \eqref{eq:RRT-weight}.
Dividing \eqref{eq:D-recursion} by \(\gamma_{k+1}\) gives
\[
        \frac{\mathbb E[E_{k+1}]}{\gamma_{k+1}}
        \le
        \frac{\mathbb E[E_k]}{\gamma_k}
        +
        K\frac{\sqrt{k}}{N\gamma_{k+1}} .
\]
Since \(E_1=0\), iteration yields
\[
        \frac{\mathbb E[E_{k_0}]}{\gamma_{k_0}}
        \le
        \frac{K}{N}
        \sum_{k=1}^{k_0-1}
        \frac{\sqrt{k}}{\gamma_{k+1}} .
\]
By Lemma~\ref{lem:gamma-asymptotics}, there exists a finite number $K$ such that for any integer $k\geq 1$, $\sqrt{k}/\gamma_{k+1}\leq \sqrt{k+1}/\gamma_{k+1} \leq K$, therefore such that
\[
        \sum_{k=1}^{k_0-1}
        \frac{\sqrt{k}}{\gamma_{k+1}}
        \le
        K k_0,
\]
and thus
\[
        \mathbb E[E_{k_0}]
        \le
        K\frac{\gamma_{k_0}k_0}{N}
        \le
        K\frac{k_0^{3/2}}{N},
\] by Lemma~\ref{lem:gamma-asymptotics} again.
This proves the announced expectation bound.

Finally, \(E_{k_0}\ge0\), so Markov's inequality gives, for every \(\eta>0\),
\[
        \mathbb P\left(
        \frac{E_{k_0}}{\sqrt{k_0}}>\eta
        \right)
        \le
        \frac{\mathbb E[E_{k_0}]}{\eta\sqrt{k_0}}
        \le
        \frac{K}{\eta}\frac{k_0}{N},
\]
which tends to \(0\) as $N$ goes to infinity, by assumption. Hence
\[
        \frac{B_{k_0}-B^{(\mathrm m)}_{k_0}}{\sqrt{k_0}}
        \xrightarrow[N\to\infty]{\mathbb P}0 .
\]
\end{proof}

\subsection{Seeding}

\begin{proposition}[Seeding at mesoscopic entrance]\label{prop:B-seeding}
Let \(k_0=k_0(N)\) be such that $1\ll k_0\ll N.$ Then
\begin{equation}\label{eq-B_{k_0}}
        \frac{B_{k_0}}{\sqrt{k_0}}
        \Longrightarrow
        \frac{2}{\sqrt{\pi}}\,M_\infty^\star,
        \qquad \text{ as } N\to\infty.
\end{equation}
Moreover,
\begin{equation}\label{eq-M_{k_0}}
        M_{k_0}^{(N)}
        \Longrightarrow
        M_\infty^\star.
\end{equation}
\end{proposition}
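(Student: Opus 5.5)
We prove \eqref{eq-B_{k_0}} first and then transfer it to $M^{(N)}_{k_0}$. Write
\[
\frac{B_{k_0}}{\sqrt{k_0}}=\frac{B^{(\mathrm m)}_{k_0}}{\sqrt{k_0}}+\frac{B_{k_0}-B^{(\mathrm m)}_{k_0}}{\sqrt{k_0}} .
\]
By Lemma~\ref{lem:paternal-error}, the second term tends to $0$ in probability.

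For the first term, Lemma~\ref{lem:maternal-equals-RRT} gives $B^{(\mathrm m)}_{k_0}\stackrel d= B^\star_{k_0}=\gamma_{k_0}M^\star_{k_0}$. Since $k_0\to\infty$, Proposition~\ref{prop:RRT-mart} gives $M^\star_{k_0}\to M^\star_\infty$ almost surely. Here the RRT is a single fixed process and only the index $k_0(N)$ varies, so no uniformity in $N$ is needed. Lemma~\ref{lem:gamma-asymptotics} gives $\gamma_{k_0}/\sqrt{k_0}\to 2/\sqrt\pi$. Hence $B^{(\mathrm m)}_{k_0}/\sqrt{k_0}\Rightarrow (2/\sqrt\pi)M^\star_\infty$, and Slutsky's lemma yields \eqref{eq-B_{k_0}}.

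For \eqref{eq-M_{k_0}}, we have
\[
M^{(N)}_{k_0}=a_{k_0,N}\bigl(\overline B_{k_0}-\overline C_{k_0}\bigr)=\frac{a_{k_0,N}}{\sqrt{k_0}}\cdot\frac{B_{k_0}}{\sqrt{k_0}}-a_{k_0,N}\overline C_{k_0}.
\]
By \eqref{uniform-asymptotics-a} in Lemma~\ref{lem:a-estimates}, which applies because $k_0\to\infty$, the factor $a_{k_0,N}/\sqrt{k_0}$ converges to $\sqrt\pi/2$. Combined with \eqref{eq-B_{k_0}}, the first term therefore converges in law to $M^\star_\infty$.

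For the second term, $\overline C_{k_0}\ge0$, and Lemma~\ref{lem:uv-apriori} together with \eqref{2-sided-estimate-a} gives
\[
\mathbb E\bigl[a_{k_0,N}\overline C_{k_0}\bigr]\le K\sqrt{k_0}\cdot K\frac{\sqrt{k_0}}{N}=K\frac{k_0}{N}\to0 .
\]
Markov's inequality then shows that this term tends to $0$ in probability, and Slutsky's lemma concludes.

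There is no serious obstacle, since all estimates are already available from the earlier lemmas. The only point needing care is the distributional transfer in the first step. The identity $B^{(\mathrm m)}_{k_0}\stackrel d=B^\star_{k_0}$ holds only in law and for each fixed $N$, so we cannot use Slutsky's lemma on the RRT space directly. We handle this by recording that $B^{(\mathrm m)}_{k_0}/\sqrt{k_0}$ has the same law as $\gamma_{k_0}M^\star_{k_0}/\sqrt{k_0}$, a law that converges to that of $(2/\sqrt\pi)M^\star_\infty$. The paternal error is then added on the original space, where Slutsky's lemma applies because that error converges in probability to the constant $0$.
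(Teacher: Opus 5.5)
Your proposal is correct and follows essentially the same route as the paper's proof. You use the maternal-only RRT coupling together with Lemma~\ref{lem:paternal-error} and Slutsky's lemma for $B_{k_0}/\sqrt{k_0}$, then $a_{k_0,N}/\sqrt{k_0}\to\sqrt\pi/2$ for the $\overline B_{k_0}$ part, and a first-moment bound with Markov's inequality for the nonnegative term $a_{k_0,N}\overline C_{k_0}$. The only cosmetic difference is that you invoke the almost sure convergence of $M^\star_k$ rather than its $L^2$ convergence, which works equally well.
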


\begin{proof}
By Lemma~\ref{lem:maternal-equals-RRT},
\[
        \frac{B_{k_0}^{(\mathrm m)}}{\sqrt{k_0}}
        \stackrel{d}{=}
        \frac{B_{k_0}^{\star}}{\sqrt{k_0}}
        =
        \frac{\gamma_{k_0}}{\sqrt{k_0}}\,M_{k_0}^{\star}.
\]
Lemma~\ref{lem:gamma-asymptotics} gives
\[
        \frac{\gamma_{k_0}}{\sqrt{k_0}}
        \longrightarrow
        \frac{2}{\sqrt{\pi}},
\]
and from Proposition \ref{prop:RRT-mart}, \(M_k^\star\to M_\infty^\star\) in \(L^2\). Hence
\[
        \frac{B_{k_0}^{(\mathrm m)}}{\sqrt{k_0}}
        \Longrightarrow
        \frac{2}{\sqrt{\pi}}\, M_\infty^\star .
\]
Together with Lemma~\ref{lem:paternal-error}, this implies \eqref{eq-B_{k_0}}. It remains to identify the entrance value of the martingale. We write
\[
        M_{k_0}^{(N)}
        =
        a_{k_0,N}
        \left(
        \frac{B_{k_0}}{k_0}
        -
        \frac{C_{k_0}}{N-k_0}
        \right).
\]
By Lemma \ref{lem:a-estimates}, since \(k_0=k_0(N)\) diverges as $N$ goes to infinity,
\[
        \frac{a_{k_0,N}}{\sqrt{k_0}}
        \longrightarrow
        \frac{\sqrt{\pi}}{2}.
\]
Therefore,
\[
        a_{k_0,N}\frac{B_{k_0}}{k_0}
        =
        \frac{a_{k_0,N}}{\sqrt{k_0}}\,
        \frac{B_{k_0}}{\sqrt{k_0}}
        \Longrightarrow
        M_\infty^\star .
\]
On the other hand, Lemma~\ref{lem:uv-apriori} gives
\[
        \mathbb E\left[
        a_{k_0,N}\frac{C_{k_0}}{N-k_0}
        \right]
        \le
        K\frac{k_0}{N}
        \longrightarrow 0.
\]
Since $a_{k_0,N}\frac{C_{k_0}}{N-k_0}$ is non-negative, it converges to \(0\) in probability, which gives \eqref{eq-M_{k_0}}.
\end{proof}

\section{Macroscopic propagation}\label{sec:macroscopic}

\subsection{Quadratic variation}

\begin{proposition}[One-step conditional bound]
\label{prop:QV-local-corr}
There exists a constant \(K<\infty\), independent of \(N\) and \(k\),
such that, for all \(N\ge3\) and \(1\le k\le N-2\), setting
\(d:=N-k\), one has
\begin{equation*}
\begin{aligned}
        \mathbb E\left[
        \bigl(M_{k+1}^{(N)}-M_k^{(N)}\bigr)^2
        \,\middle|\,
        \mathcal F_{S_k}
        \right]
        \le
        K\left[
        \frac{\overline B_k+\overline C_k}{k}
        +
        \frac{\overline B_k+\overline C_k}{d}
        +
        \frac{k}{d^2}
        \bigl(\overline B_k+\overline C_k\bigr)
        \right].
\end{aligned}
\end{equation*}
\end{proposition}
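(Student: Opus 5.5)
We bound the conditional second moment of the martingale increment $M_{k+1}^{(N)}-M_k^{(N)}=a_{k+1,N}X_{k+1}-a_{k,N}X_k$ through a conditional variance.

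\textbf{Reduction to a conditional variance.} Since $M^{(N)}$ is a martingale (Proposition~\ref{prop:M}), the conditional second moment of the increment equals $a_{k+1,N}^2\operatorname{Var}(X_{k+1}\mid\mathcal F_{S_k})$. By Lemma~\ref{lem:a-estimates}, $a_{k+1,N}^2\le Kk$. The target is therefore
\[
\operatorname{Var}(X_{k+1}\mid\mathcal F_{S_k})\le \frac{K}{k}\Bigl[\frac{s}{k}+\frac{s}{d}+\frac{ks}{d^2}\Bigr],
\qquad s:=\overline B_k+\overline C_k .
\]
We write $X_{k+1}=\overline B_{k+1}-\overline C_{k+1}$ and bound $\operatorname{Var}(\overline B_{k+1}\mid\mathcal F_{S_k})$ and $\operatorname{Var}(\overline C_{k+1}\mid\mathcal F_{S_k})$ separately, using $(x-y)^2\le 2x^2+2y^2$.

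\textbf{The advantaged average.} $\overline B$ changes only at the final jump of the block, where
\[
\overline B_{k+1}=\tfrac{k}{k+1}\overline B_k+\frac{w(\mu)+w(\pi)}{2(k+1)} .
\]
Here $\mu$ is uniform on $\mathcal Y$, which is unchanged during the block, so $\mathbb E[w(\mu)^2]\le \mathbb E[w(\mu)]=\overline B_k$ because the weights $w\le1$. The father $\pi$ is uniform on $I_N$ at time $T_k$, so $\mathbb E[w(\pi)^2]\le (B_k+C_{T_k})/N$. The quantity $\mathbb E[C_{T_k}\mid\mathcal F_{S_k}]$ equals $d\,\mathbb E[\overline C^{(L)}\mid\mathcal F_{S_k}]$. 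By \eqref{eq-Cbar-stable-exp} and the geometric law of $L$, its drift adds at most $(2N+1)^{-1}X_k$, so it is $\le d\overline C_k+K s$. This gives $\operatorname{Var}(\overline B_{k+1}\mid\mathcal F_{S_k})\le K s/k^2$, which after multiplication by $a_{k+1,N}^2$ yields the first term.

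\textbf{The disadvantaged average (main obstacle).} $\overline C_{k+1}$ is a random sum over a geometric number $L$ of stable steps plus one jump step. The plan is a martingale-plus-drift decomposition inside the block.

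Each stable step has increment
\[
\frac1d\Bigl(\frac{w(\mu)+w(\pi)}2-w(\kappa)\Bigr),
\]
with conditional second moment $\le K(\overline C^{(j)}+(k\overline B_k+d\overline C^{(j)})/N)/d^2$, again using $w^2\le w$. The drift $\lambda X^{(j)}$ of each stable step is of order $k|X|/(Nd)$. The jump step contributes $(\overline C-w(\kappa))/(d-1)$, whose second moment is $\le K\overline C/d^2$.

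We then use
\[
\operatorname{Var}\Bigl(\sum\Bigr)\le 2\,\mathbb E\Bigl[\sum(\text{centred increments})^2\Bigr]+2\operatorname{Var}\Bigl(\sum\text{drifts}\Bigr).
\]
The martingale part sums $\mathbb E[L+1]=N/k$ steps, each of size $\le K(\overline C+ks/N)/d^2$. This gives a bound of order $N\overline C/(kd^2)+s/d^2$. The drift part is at most $\mathbb E[(L\lambda |X|)^2]\le (N/k)^2\lambda^2 X^2\cdot K$. With $\lambda=k/(2Nd)$, this is $\le K X_k^2/d^2\le K s\,(\overline B_k\vee\overline C_k)/d^2$. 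Since $\overline B_k\le 1$ (each weight is at most $1$), this is at most $K s/d^2$.

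The genuinely delicate point is the term $N\overline C/(kd^2)$. Multiplying by $k$, it becomes $N\overline C/d^2=\overline C/d+k\overline C/d^2$, since $N=k+d$. These are exactly the second and third terms of the statement. Similarly, $k\cdot s/d^2$ is absorbed in the third term, and cross terms are handled by Cauchy--Schwarz.

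The main obstacle is keeping the factors of $L$ correctly. Conditioning on $L$ interacts with the evolving $\overline C^{(j)}$, so we will bound $\mathbb E[\overline C^{(j)}\mid\mathcal F_{S_k},L\ge j]$ by $\overline C_k+j\lambda \overline B_k$ via \eqref{eq-Cbar-stable-exp}. We will also use the independence of the stable/jump decision from the genetic configuration, so that the sum over $j$ weighted by $q^j$ stays of order $N/k$ without extra powers.
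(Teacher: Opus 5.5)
Your argument is correct, but it is organised differently from the paper's. The paper never separates $\overline B$ from $\overline C$. It unrolls the multiplicative recursion of $X$ along the block: $X^{(j+1)}=rX^{(j)}+\xi_{j+1}$ on stable steps and $X_{k+1}=sX^{(L)}+\zeta$ at the jump, with $r=1-\lambda$ and $s=1-\frac{d}{2N(k+1)}$. It then writes $X_{k+1}-\alpha_{k,N}X_k$ as three pieces: a random-coefficient term $(sr^L-\alpha_{k,N})X_k$, controlled via $\operatorname{Var}(sr^L\mid\mathcal F_{S_k})\le\lambda^2\operatorname{Var}(L\mid\mathcal F_{S_k})$; a discounted stable-step martingale $s\sum_{j=1}^{L}r^{L-j}\xi_j$; and a jump term mixing the $\overline B$ and $\overline C$ jumps.

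You instead bound $\operatorname{Var}(\overline B_{k+1}\mid\mathcal F_{S_k})$ and $\operatorname{Var}(\overline C_{k+1}\mid\mathcal F_{S_k})$ separately. The first is a one-line jump estimate giving the $1/k$ term. For the second you use the additive drift-plus-martingale decomposition $\overline C_{k+1}-\overline C_k=\sum_{j<L}\lambda X^{(j)}-\sum_{j<L}\xi_{j+1}+J$, which is exactly the decomposition the paper uses later for Proposition~\ref{prop:Cbar-remainder-variance}. Since $\operatorname{Var}(\overline C_{k+1}\mid\mathcal F_{S_k})=\mathbb E[(\Delta R_{k+1})^2\mid\mathcal F_{S_k}]$, your route shows that the present proposition is in effect a corollary of \eqref{eq:Cbar-remainder-global-bound} plus the advantaged-jump bound, after multiplying by $a_{k+1,N}^2\le Kk$. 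This removes the parallel bookkeeping of the two proofs.

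The only cost is that you use $\mathbb E[L^2\mid\mathcal F_{S_k}]\le K(N/k)^2$ rather than $KNd/k^2$. After multiplication by $a_{k+1,N}^2$, this makes the drift contribution of order $k(\overline B_k+\overline C_k)/d^2$ instead of the paper's $k(\overline B_k+\overline C_k)/(Nd)$. That is harmless, since the $k/d^2$ term is already forced by the jump of $\overline C$.

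One step should be written more carefully. The drifts are $\lambda X^{(j)}$, with $X^{(j)}$ evolving inside the block, so $\mathbb E[(L\lambda|X_k|)^2]$ is not an upper bound as written: it vanishes when $X_k=0$, while the drift sum still fluctuates. Instead, use $P^2\le\lambda^2L\sum_{j<L}(X^{(j)})^2$ together with $(X^{(j)})^2\le\overline B_k+\overline C^{(j)}$. Then use the fact that conditioning on $\{L=\ell\}$ does not bias the first $\ell$ stable steps. This gives $\mathbb E[\overline C^{(j)}\mid\mathcal F_{S_k},L=\ell]=m_j\le\max\{\overline B_k,\overline C_k\}$ for $j\le\ell$, by Lemma~\ref{lem:stable-block-average}. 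Your cruder bound $m_j\le\overline C_k+j\lambda\overline B_k$ also suffices, at the price of an extra term of order $\lambda^3\mathbb E[L^3\mid\mathcal F_{S_k}]\,\overline B_k\le K\overline B_k/d^3$.
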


We use the following Lemma to prove Proposition \ref{prop:QV-local-corr}.

\begin{lemma}[Disadvantaged average during a stable block]
\label{lem:stable-block-average}
Fix \(1\le k\le N-1\), and set
\[
        m_j
        :=
        \mathbb E\left[
        \overline C^{(j)}
        \,\middle|\,
        \mathcal F_{S_k},
        L\ge j
        \right],
        \qquad j\ge0.
\]
Then
\begin{equation}\label{eq:rec-m_j}
        m_{j+1}
        =
        (1-\lambda)m_j+\lambda\overline B_k,
        \qquad
        m_0=\overline C_k.
\end{equation}
Consequently,
\begin{equation}\label{eq:sol-m_j}
        m_j
        =
        \overline B_k
        +
        (1-\lambda)^j
        \bigl(
        \overline C_k-\overline B_k
        \bigr),
\end{equation}
and in particular
\begin{equation}\label{eq-control-m}
        m_j
        \le
        \max\{\overline B_k,\overline C_k\}
        \le
        \overline B_k+\overline C_k.
\end{equation}
\end{lemma}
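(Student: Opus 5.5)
The plan is to derive the recursion \eqref{eq:rec-m_j} from the one-step drift formula \eqref{eq-Cbar-stable-exp} of Proposition~\ref{prop:fund-dynamics-BC}, and then to solve it explicitly.

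First, we fix the structure of a stable block. During the stable steps of the block \([S_k,S_{k+1})\) the advantaged class does not change, so \(B\) is constant and \(\overline B^{(j)}=\overline B_k\) for every \(j\le L\). Hence \(X^{(j)}=\overline B_k-\overline C^{(j)}\), and \eqref{eq-Cbar-stable-exp} becomes
\[
\mathbb E\bigl[\overline C^{(j+1)}-\overline C^{(j)}\,\big|\,\mathcal F_{S_k+j},\,Y_{S_k+j+1}=k\bigr]=\lambda\bigl(\overline B_k-\overline C^{(j)}\bigr).
\]
Next we must handle the conditioning on \(\{L\ge j\}\) versus \(\{L\ge j+1\}\). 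The event \(\{L\ge j+1\}\) is \(\{L\ge j\}\cap\{\text{step } S_k+j \text{ is stable}\}\). Conditionally on \(\mathcal F_{S_k+j}\) and on \(\{L\ge j\}\), this stability has probability \(q\), which does not depend on the genetic configuration; this is the independence already invoked in the proof of Proposition~\ref{prop:M}.

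We then average the displayed identity over \(\mathcal F_{S_k+j}\) restricted to \(\{L\ge j\}\). Because the weights \(q\) are constant, conditioning on \(\{L\ge j+1\}\) does not bias the law of \(\overline C^{(j)}\). This gives
\[
\mathbb E\bigl[\overline C^{(j)}\,\big|\,\mathcal F_{S_k},\,L\ge j+1\bigr]=m_j,
\]
and therefore \(m_{j+1}=m_j+\lambda(\overline B_k-m_j)\), which is \eqref{eq:rec-m_j}. The initial value \(m_0=\overline C_k\) holds because \(\{L\ge 0\}\) is the whole space. This conditioning step is the only point requiring care. I would write it as
\[
\mathbb E\bigl[\overline C^{(j)}\mathbf 1_{\{L\ge j+1\}}\,\big|\,\mathcal F_{S_k}\bigr]=q\,\mathbb E\bigl[\overline C^{(j)}\mathbf 1_{\{L\ge j\}}\,\big|\,\mathcal F_{S_k}\bigr],
\]
and then divide by \(\mathbb P(L\ge j+1\mid\mathcal F_{S_k})=q^{j+1}\).

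Solving the recursion is immediate. The affine recursion has fixed point \(\overline B_k\), so \(m_j-\overline B_k=(1-\lambda)^j(m_0-\overline B_k)\), which is \eqref{eq:sol-m_j}.

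For \eqref{eq-control-m}, we need \(0\le\lambda\le1\). Since \(k\le N-1\) and \(d\ge1\), \(\lambda=k/(2Nd)\le 1/2\). Then \(m_j=(1-(1-\lambda)^j)\overline B_k+(1-\lambda)^j\overline C_k\) is a convex combination of \(\overline B_k\) and \(\overline C_k\). It is therefore at most \(\max\{\overline B_k,\overline C_k\}\), which is at most \(\overline B_k+\overline C_k\) because both averages are nonnegative.
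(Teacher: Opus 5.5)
Your proposal is correct and follows the paper's proof. It uses the one-step drift \eqref{eq-Cbar-stable-exp} with $\overline B$ frozen during the block, the fact that the stability probability $q$ does not depend on the genetic configuration (so conditioning on $\{L\ge j+1\}$ rather than $\{L\ge j\}$ leaves $m_j$ unchanged), and the explicit solution of the affine recurrence; your convex-combination argument for \eqref{eq-control-m}, using $0\le\lambda\le 1/2$, is a tidier version of the paper's case distinction on the sign of $\overline C_k-\overline B_k$.
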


\begin{proof}[Proof of Lemma \ref{lem:stable-block-average}]
On \(\{L\geq j+1\}\), Proposition~\ref{prop:fund-dynamics-BC} (Equation \eqref{eq-Cbar-stable-exp}) gives
\[
        \mathbb E\left[
        \overline C^{(j+1)}
        \,\middle|\,
        \mathcal F_{S_k+j},\,L\geq j+1
        \right]
        =
        \overline C^{(j)}+\lambda X^{(j)}
        =
        (1-\lambda)\overline C^{(j)}
        +
        \lambda\overline B_k.
\]
Moreover, on \(\{L\ge j\}\), the conditional probability that the
next update is stable is \(q\), independently of the current genetic
configuration. Hence
\[
\mathbb E\left[
        \overline C^{(j)}
        \,\middle|\,
        \mathcal F_{S_k},
        L\ge j+1
        \right]
=
\mathbb E\left[
        \overline C^{(j)}
        \,\middle|\,
        \mathcal F_{S_k},
        L\ge j
        \right]=m_j.\]
Taking conditional expectations in the first display yields \eqref{eq:rec-m_j}. The explicit formula \eqref{eq:sol-m_j} follows by solving this affine recurrence. The first inequality of \eqref{eq-control-m} can be obtained by a disjunction case based on the sign of $\overline{C}_k-\overline{B}_k$ and the second inequality comes from the positivity of the mean contributions $\overline{C}_k$ and $\overline{B}_k$.
\end{proof}

\begin{proof}[Proof of Proposition \ref{prop:QV-local-corr}]
Throughout the proof, \(K\) denotes a finite constant whose value may
change from line to line. 

Since by definition \eqref{def:akN}, \(a_{k,N}=a_{k+1,N}\alpha_{k,N}\) for every $1\leq k\leq N-1$, then from the definition \eqref{def-M} of $M_k^{(N)}$,
\begin{equation}\label{eq-def-diff-M}
        M_{k+1}^{(N)}-M_k^{(N)}
        =
        a_{k+1,N}
        \bigl(X_{k+1}-\alpha_{k,N}X_k\bigr).
\end{equation}

By Lemma~\ref{lem:a-estimates}, there exists $K\geq0$ such that for any $1\leq k\leq N-2$,
\begin{equation}
\label{eq:QV-a-bound}
        a_{k+1,N}^2\le Kk.
\end{equation}
We now study the difference $X_{k+1}-\alpha_{k,N}X_k$, and to this aim analyse the block \([S_k,S_{k+1})\). Recall the block notations \eqref{eq:notations} and set $T:=S_k$ for a better readability.
Conditionally on \(\mathcal F_T\),
\begin{equation*}
        \mathbb P(L=\ell\mid\mathcal F_T)
        =q^\ell\frac{k}{N},
        \qquad \ell\ge0,
\end{equation*}
and therefore
\begin{equation}
\label{eq:QV-geometric-moments}
        \mathbb E[L\mid\mathcal F_T]=\frac dk,
        \qquad
        \operatorname{Var}(L\mid\mathcal F_T)
        =\frac{Nd}{k^2}.
\end{equation}
Set also
\begin{equation*}
        \lambda:=\frac{k}{2Nd},
        \qquad
        r:=1-\lambda,
        \qquad
        s:=1-\frac{d}{2N(k+1)}.
\end{equation*}
For \(j\ge0\), let
\begin{equation*}
        \mathcal B_j
        :=
        \mathcal F_T
        \vee
        \sigma\bigl(
        (\mu_{T+i},\pi_{T+i},\kappa_{T+i})
        :0\le i<j
        \bigr).
\end{equation*}
On \(\{L\ge j\}\), denote by \(X^{(j)}\) and \(\overline C^{(j)}\) the values of \(X\) and \(\overline C\) after the first \(j\) stable steps of the block i.e. at time $T+j$. 

On \(\{L\ge j+1\}\), Proposition \ref{prop:fund-dynamics-BC} gives
\begin{equation*}
        X^{(j+1)}
        =
        rX^{(j)}+\xi_{j+1},
        \qquad \text{where}\qquad
        \mathbb E\left[
        \xi_{j+1}
        \,\middle|\,
        \mathcal B_j,L\ge j+1
        \right]
        =0.
\end{equation*}
At the final jump, on \(\{L=\ell\}\), Proposition \ref{prop:fund-dynamics-BC} again gives
\begin{equation*}
        X_{k+1}
        =
        sX^{(\ell)}+\zeta,
        \qquad \text{where\qquad}
        \mathbb E\left[
        \zeta
        \,\middle|\,
        \mathcal B_\ell,L=\ell
        \right]
        =0.
\end{equation*}
It follows that
\begin{equation}\label{decomp-diff-X}
\begin{aligned}
        X_{k+1}-\alpha_{k,N}X_k
        &=
        (sr^L-\alpha_{k,N})X_k+s\sum_{j=1}^{L}r^{L-j}\xi_j+\zeta\\&
        =P+Q+J,
        \end{aligned}
\end{equation}
where
\begin{equation*}
        P:=(sr^L-\alpha_{k,N})X_k,
        \qquad
        Q:=s\sum_{j=1}^{L}r^{L-j}\xi_j,
        \qquad
        J:=\zeta.
\end{equation*}
Since
\begin{equation}\label{eq:PQJ}
        (P+Q+J)^2
        \le
        3P^2+3Q^2+3J^2,
\end{equation} the three terms can be dealt with separately.

We first control \(P\). Note that,
\begin{equation*}
        \mathbb E[sr^L\mid\mathcal F_T]
        =
        \frac{s(k/N)}{1-qr}
        =
        \frac{2Nk+N+k}{(2N+1)(k+1)}
        =
        \alpha_{k,N}.
\end{equation*}
Using \eqref{eq:QV-geometric-moments}, since \(0<s\le1\) and
\(\ell\mapsto r^\ell\) is \(\lambda\)-Lipschitz,
\begin{equation}
\label{eq:QV-random-coefficient-variance}
        \operatorname{Var}(sr^L\mid\mathcal F_T)
        \le
        \lambda^2\operatorname{Var}(L\mid\mathcal F_T)
        \le
        \frac{K}{Nd}.
\end{equation}
Furthermore, since \(0\le\overline B_k,\overline C_k\le1\),
\begin{equation}\label{eq:QV-X-square-bound}
        X_k^2
        =
        (\overline B_k-\overline C_k)^2
        \le
        \overline B_k+\overline C_k.
\end{equation}
Hence, by \eqref{eq:QV-a-bound},
\eqref{eq:QV-random-coefficient-variance}, and
\eqref{eq:QV-X-square-bound},
\begin{equation}
\label{eq:QV-P-bound}
        a_{k+1,N}^2
        \mathbb E[P^2\mid\mathcal F_T]
        \le
        K\frac{k}{Nd}
        \bigl(\overline B_k+\overline C_k\bigr)\leq K\frac{\overline B_k+\overline C_k}{d}
\end{equation}
since $k\leq N$. We next control the stable steps martingale term $Q$. During a stable step, recall from the fundamental dynamics described in Proposition \ref{prop:fund-dynamics-BC} that
\begin{equation*}
        X^{(j+1)}-X^{(j)}
        =
        -\frac1d
        \left(
        \frac{w(\mu)+w(\pi)}{2}-w(\kappa)
        \right).
\end{equation*}
Since by definition
\begin{equation}\label{eq-def-xi}
        \xi_{j+1}
        =
        X^{(j+1)}-X^{(j)}+\lambda X^{(j)}
\end{equation}
and $\mathbb{E}\left[
        \xi_{j+1}
        \,\middle|\,
        \mathcal B_j,L\ge j+1
        \right]
        =0$,
\begin{align}
\label{eq-detail-momentXi}
        \mathbb E\left[
        \xi_{j+1}^2
        \,\middle|\,
        \mathcal B_j,L\ge j+1
        \right]
        &=\operatorname{Var}\left(
        \xi_{j+1}
        \,\middle|\,
        \mathcal B_j,L\ge j+1
        \right) \notag\\
        &=\operatorname{Var}\left(
        X^{(j+1)}-X^{(j)}
        \,\middle|\,
        \mathcal B_j,L\ge j+1
        \right) \notag\\
        &\le
        \frac{1}{d^2}
        \mathbb{E}\left[]
        \Big(
        \frac{w(\mu)+w(\pi)}{2}-w(\kappa)
        \Big)^2
        \middle|\,
        \mathcal B_j,L\ge j+1
        \right]\notag\\
        &\le
        \frac{K}{d^2}
        \mathbb{E}\left[]
        \frac{w(\mu)+w(\pi)}{2}-w(\kappa)
        \middle|\,
        \mathcal B_j,L\ge j+1
        \right]\qquad\text{since $w(i)^2\leq w(i)$} \notag\\
        &\le
        \frac{K}{d^2}
        \left(
        \overline C^{(j)}+\overline B_k
        \right).
\end{align}
Indeed, conditionally on a stable step, the mother and the killed
individual are uniform on the disadvantaged class, whereas the father
is uniform on \(I_N\), and \(w(i)^2\le w(i)\).

Conditionally on \(L=\ell\), the increments
\(\xi_1,\ldots,\xi_\ell\) are martingale differences, so the cross
terms vanish. Since \(0\le r,s\le1\), \eqref{eq-detail-momentXi}
gives
\begin{equation*}
        \mathbb E[Q^2\mid\mathcal F_T]
        \le
        \frac{K}{d^2}
        \mathbb E\left[
        \sum_{j=0}^{L-1}
        \left(
        \overline C^{(j)}+\overline B_k
        \right)
        \,\middle|\,
        \mathcal F_T
        \right].
\end{equation*}
Let
\begin{equation*}
        m_j
        :=
        \mathbb E\left[
        \overline C^{(j)}
        \,\middle|\,
        \mathcal F_T,L\ge j
        \right].
\end{equation*}
By Lemma~\ref{lem:stable-block-average},
\begin{equation*}
        m_j\le \overline B_k+\overline C_k.
\end{equation*}
Since $\mathbb E\left[
        \overline C^{(j)}
        \,\middle|\,
        \mathcal F_T,L\ge j+1
        \right]=\mathbb E\left[
        \overline C^{(j)}
        \,\middle|\,
        \mathcal F_T,L\ge j
        \right]$,
\begin{align*}
        \mathbb E\left[
        \sum_{j=0}^{L-1}
        \left(
        \overline C^{(j)}+\overline B_k
        \right)
        \,\middle|\,
        \mathcal F_T
        \right]&= \sum_{j\geq0}
        E\left[\mathbf{1}_{L\geq j+1}
        \left(
        \overline C^{(j)}+\overline B_k
        \right)
        \,\middle|\,
        \mathcal F_T
        \right]
        \\&=\sum_{j\geq0}q^{j+1}
        E\left[
        \left(
        \overline C^{(j)}+\overline B_k
        \right)
        \,\middle|\,
        \mathcal F_T, L\geq j+1
        \right]\\&=
        \sum_{j\ge0}q^{j+1}
        \left(m_j+\overline B_k\right)\\
        &\le
        K\frac dk
        \left(\overline B_k+\overline C_k\right).
\end{align*}
Therefore using that $q=d/N$,
\begin{equation*}
        \mathbb E[Q^2\mid\mathcal F_T]
        \le
        \frac{K}{kd}
        \left(\overline B_k+\overline C_k\right)
\end{equation*}
and hence, by \eqref{eq:QV-a-bound},
\begin{equation}
\label{eq:QV-Q-bound}
        a_{k+1,N}^2
        \mathbb E[Q^2\mid\mathcal F_T]
        \le
        \frac{K}{d}
        \left(\overline B_k+\overline C_k\right).
\end{equation}

It remains to control the final jump term $J=\zeta=X_{k+1}-sX^{(\ell)}$. Let \(\overline C_-\) denote the disadvantaged average immediately before that jump. Proposition \ref{prop:fund-dynamics-BC} gives
\begin{equation*}
        X_{k+1}-X^{(L)}
        =
        -\frac{\overline B_k}{k+1}
        -\frac{\overline C_-}{d-1}
        +
        \frac{w(\mu)+w(\pi)}{2(k+1)}
        +
        \frac{w(\kappa)}{d-1}.
\end{equation*}
We have as previously
\begin{equation*}
\begin{aligned}
        \mathbb E\left[
        J^2
        \,\middle|\,
        \mathcal B_\ell,L=\ell
        \right]
        &=Var(J \,\big|\,
        \mathcal B_\ell,L=\ell)\\&=Var(X_{S_{k+1}}-X_{S_{k+1}-1} \,\big|\,
        \mathcal B_\ell,L=\ell)\\&=Var\Big( \frac{w(\mu)+w(\pi)}{2(k+1)}
        +\frac{w(\kappa)}{d-1} \,\big|\,
        \mathcal B_\ell,L=\ell\Big) \\&\le
        K\left[
        \frac{\overline B_k+\overline C_-}{k^2}
        +
        \frac{\overline C_-}{d^2}
        \right] \qquad\text{using again \(w(i)^2\le w(i)\) and $k\leq N-2$.}
    \end{aligned}
\end{equation*}
Since \(\overline C_-=\overline C^{(L)}\), conditioning on \(L\) and using
Lemma~\ref{lem:stable-block-average} gives
\begin{equation*}
        \mathbb E[\overline C_-\mid\mathcal F_T]
        =
        \sum_{\ell\ge0}q^\ell\frac{k}{N}\,m_\ell
        \le
        \overline B_k+\overline C_k.
\end{equation*}
Consequently,
\begin{equation*}
        \mathbb E[J^2\mid\mathcal F_T]
        \le
        K\left[
        \frac{\overline B_k+\overline C_k}{k^2}
        +
        \frac{\overline B_k+\overline C_k}{d^2}
        \right],
\end{equation*}
and therefore
\begin{equation}
\label{eq:QV-J-bound}
        a_{k+1,N}^2
        \mathbb E[J^2\mid\mathcal F_T]
        \le
        K\left[
        \frac{\overline B_k+\overline C_k}{k}
        +
        \frac{k}{d^2}
        \bigl(\overline B_k+\overline C_k\bigr)
        \right].
\end{equation}

Recalling the Equations \eqref{eq-def-diff-M}, \eqref{decomp-diff-X}, \eqref{eq:PQJ} and using \eqref{eq:QV-P-bound},
\eqref{eq:QV-Q-bound}, and \eqref{eq:QV-J-bound}, proves the result.
\end{proof}

\begin{lemma}[Negligibility of the martingale increments away from the origin]
\label{lem:M-flat-away-origin}
Let \(k_0=k_0(N)\) be an integer sequence such that $1\ll k_0\ll N$.
Then, for every \(\varepsilon\in(0,1)\),
\[
        \sup_{k_0\le k\le \lfloor(1-\varepsilon)N\rfloor}
        \left|
        M_k^{(N)}-M_{k_0}^{(N)}
        \right|
        \xrightarrow[N\to\infty]{L^2}0.
\]
In particular, the convergence holds in probability.
\end{lemma}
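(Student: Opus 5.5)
The plan is to apply Doob's $L^2$ maximal inequality to the martingale $\bigl(M_k^{(N)}-M_{k_0}^{(N)}\bigr)_{k_0\le k\le k_1}$, where $k_1:=\lfloor(1-\varepsilon)N\rfloor$. By Proposition~\ref{prop:M} this is a martingale with respect to $(\mathcal F_{S_k})$. Doob's inequality and the orthogonality of martingale increments give
\[
        \mathbb E\Bigl[\sup_{k_0\le k\le k_1}\bigl|M_k^{(N)}-M_{k_0}^{(N)}\bigr|^2\Bigr]
        \le
        4\sum_{k=k_0}^{k_1-1}
        \mathbb E\Bigl[\mathbb E\bigl[(M_{k+1}^{(N)}-M_k^{(N)})^2\,\big|\,\mathcal F_{S_k}\bigr]\Bigr].
\]
It therefore suffices to show that this sum of expected conditional increments tends to $0$. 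For large $N$ we have $k_1\le N-2$, so Proposition~\ref{prop:QV-local-corr} applies to every term.

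Inserting the one-step bound of Proposition~\ref{prop:QV-local-corr}, each summand is at most
\[
        K\,\mathbb E[\overline B_k+\overline C_k]\Bigl(\frac1k+\frac1d+\frac{k}{d^2}\Bigr),
        \qquad d=N-k.
\]
For $k\le(1-\varepsilon)N$ we have $d\ge\varepsilon N$. Hence the bracket is bounded by $\frac1k+K_\varepsilon/N$, using $k/d^2\le N/(\varepsilon N)^2$. Lemma~\ref{lem:uv-apriori} gives $\mathbb E[\overline B_k+\overline C_k]\le K/\sqrt k$. The total is therefore bounded by
\[
        K\sum_{k\ge k_0}k^{-3/2}
        +
        \frac{K_\varepsilon}{N}\sum_{k\le N}k^{-1/2}
        \le
        \frac{K}{\sqrt{k_0}}+\frac{K_\varepsilon}{\sqrt N},
\]
and this tends to $0$ because $k_0\to\infty$.

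This yields $L^2$ convergence of the supremum, and convergence in probability follows by Chebyshev's inequality. The main point to check is that the bound of Proposition~\ref{prop:QV-local-corr} involves only first moments of $\overline B_k+\overline C_k$, with no square terms. That is exactly what makes the a priori first-moment bounds of Lemma~\ref{lem:uv-apriori} sufficient here. The restriction $k\le(1-\varepsilon)N$ is used only to control the $1/d$ and $k/d^2$ terms; near fixation these terms would blow up, which is why the terminal window is treated separately.
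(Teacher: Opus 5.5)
Your proposal is correct and follows the paper's proof: Doob's $L^2$ inequality for the martingale, the one-step bound of Proposition~\ref{prop:QV-local-corr} combined with the first-moment bound of Lemma~\ref{lem:uv-apriori}, and summation of $k^{-3/2}$ from $k_0$. The only cosmetic difference is in bounding $1/d$ and $k/d^2$: you bound them by $K_\varepsilon/N$, which produces a harmless extra $O(K_\varepsilon/\sqrt N)$ term, whereas the paper uses $d\ge\varepsilon k$ to bound them by $K_\varepsilon/k$.
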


\begin{proof}
Write
\[
        \Delta M_k:=M_{k+1}^{(N)}-M_k^{(N)}.
\]
Set
\[
        m_N:=\lfloor(1-\varepsilon)N\rfloor.
\]
For \(k\le m_N\), we have
\[
        d=N-k
        \ge N-(1-\varepsilon)N
        =\varepsilon N
        \ge \varepsilon k.
\]
Consequently,
\[
        \frac{1}{d}\le \frac{1}{\varepsilon k},
        \qquad
        \frac{k}{d^2}\le \frac{1}{\varepsilon^2 k}.
\]
Taking expectations in Proposition~\ref{prop:QV-local-corr} and then
using Lemma~\ref{lem:uv-apriori}, we obtain, for \(k\le m_N-1\),
\[
\begin{aligned}
        \mathbb E\left[(\Delta M_k)^2\right]
        &\le
        K_\varepsilon\,
        \mathbb E\left[
        \frac{\overline B_k+\overline C_k}{k}
        \right] \\
        &\le
        \frac{6K_\varepsilon}{k^{3/2}},
\end{aligned}
\]
where \(K_\varepsilon<\infty\) is independent of \(N\) and \(k\).
Therefore,
\[
\begin{aligned}
        \sum_{k=k_0}^{m_N-1}
        \mathbb E\left[(\Delta M_k)^2\right]
        &\le
        K_\varepsilon
        \sum_{k=k_0}^{\infty}\frac{1}{k^{3/2}} \\
        &\le
        \frac{K_\varepsilon}{\sqrt{k_0}}.
\end{aligned}
\]
The right-hand side tends to \(0\) as $N$ goes to infinity.

Since \((M_k^{(N)})_k\) is a martingale with respect to
\((\mathcal F_{S_k})_k\), Doob's \(L^2\) inequality gives
\[
        \mathbb E\left[
        \sup_{k_0\le k\le m_N}
        \left|
        M_k^{(N)}-M_{k_0}^{(N)}
        \right|^2
        \right]
        \le
        4\,
        \mathbb E\left[
        \left|
        M_{m_N}^{(N)}-M_{k_0}^{(N)}
        \right|^2
        \right]=
        4
        \sum_{k=k_0}^{m_N-1}
        \mathbb E\left[(\Delta M_k)^2\right],
\]
which tends to \(0\). This proves the claim.
\end{proof}

\subsection{\texorpdfstring{$B$}{B} and \texorpdfstring{$C$}{C} reconstruction}

The exact decomposition \eqref{eq:Cbar-drift-decomposition} identifies
the drift of \(\overline C\) over the whole block \([S_k,S_{k+1}]\).
We now estimate the conditional variance of its centred remainder.

\begin{proposition}[Quadratic bound for the \(\overline C\)-remainder]
\label{prop:Cbar-remainder-variance}
There exists a constant \(K<\infty\) such that, for every \(N\ge3\)
and \(1\le k\le N-2\), setting \(d:=N-k\), one has
\begin{equation}
\label{eq:Cbar-remainder-global-bound}
        \mathbb E\left[
        (\Delta R_{k+1})^2
        \,\middle|\,
        \mathcal F_{S_k}
        \right]
        \le
        K\left(
        \frac{1}{kd}
        +
        \frac{1}{d^2}
        \right)
        \left(\overline B_k+\overline C_k\right),
\end{equation}
where \(\Delta R_{k+1}\) is the centred increment defined in
\eqref{eq:Cbar-drift-decomposition}.

Consequently, for every \(\varepsilon\in(0,1)\), for $N>2/\varepsilon$, there exists
\(K_\varepsilon<\infty\) such that, whenever
\begin{equation*}
        1\le k\le\lfloor(1-\varepsilon)N\rfloor,
\end{equation*}
one has
\begin{equation}
\label{eq:Cbar-remainder-away-fixation}
        \mathbb E\left[
        (\Delta R_{k+1})^2
        \,\middle|\,
        \mathcal F_{S_k}
        \right]
        \le
        \frac{K_\varepsilon}{Nk}
        \left(\overline B_k+\overline C_k\right).
\end{equation}
\end{proposition}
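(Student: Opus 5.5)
The plan mirrors the proof of Proposition~\ref{prop:QV-local-corr}, applied to $\overline C$ instead of $X$. Over the block $[S_k,S_{k+1}]$ we write
\[
        \overline C_{k+1}-\overline C_k
        =
        \sum_{j=0}^{L-1}\bigl(\overline C^{(j+1)}-\overline C^{(j)}\bigr)
        +\bigl(\overline C_{k+1}-\overline C^{(L)}\bigr).
\]
Each stable increment splits as $\lambda X^{(j)}+\eta_{j+1}$, where $\eta_{j+1}$ is a $\mathcal B_j$-martingale difference. Since $\overline B$ is constant during stable steps, $\eta_{j+1}=-\xi_{j+1}$, with $\xi_{j+1}$ as in \eqref{eq-def-xi}. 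The final jump increment $\theta:=\overline C_{k+1}-\overline C^{(L)}=(\overline C_- -w(\kappa))/(d-1)$ is centred given $\mathcal B_L$, by \eqref{eq-Cbar-jump-exp}. Using $X^{(j)}=r^jX_k-\sum_{i\le j}r^{j-i}\eta_i$ (with $r=1-\lambda$), this gives
\[
        \Delta R_{k+1}
        =
        \Bigl(\lambda\sum_{j=0}^{L-1}r^j-\tfrac{1}{2N+1}\Bigr)X_k
        +\sum_{i=1}^{L}c_{i,L}\,\eta_i+\theta,
\]
where $c_{i,L}:=1-\lambda\sum_{j=i}^{L-1}r^{j-i}=r^{L-i}\in[0,1]$. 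This has exactly the $P+Q+J$ structure of \eqref{decomp-diff-X}, and we bound the three pieces as there, via $(P+Q+J)^2\le 3(P^2+Q^2+J^2)$.

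\textbf{The term $P$.} The coefficient equals $1-r^L-\frac{1}{2N+1}$, whose conditional mean vanishes by \eqref{eq-exp-DeltaC}. Its conditional variance is at most $\lambda^2\operatorname{Var}(L\mid\mathcal F_{S_k})\le K/(Nd)$, since $\ell\mapsto r^\ell$ is $\lambda$-Lipschitz and \eqref{eq:QV-geometric-moments} holds. Combined with $X_k^2\le\overline B_k+\overline C_k$, this gives a bound $K(\overline B_k+\overline C_k)/(Nd)\le K(\overline B_k+\overline C_k)/(kd)$.

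\textbf{The term $Q$.} The $\eta_i$ are orthogonal, so by \eqref{eq-detail-momentXi} we have $\mathbb E[\eta_{j+1}^2\mid\mathcal B_j]\le Kd^{-2}(\overline C^{(j)}+\overline B_k)$. Summing over $j<L$ exactly as in the proof of Proposition~\ref{prop:QV-local-corr}, with Lemma~\ref{lem:stable-block-average}, yields $K(kd)^{-1}(\overline B_k+\overline C_k)$.

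\textbf{The term $J$.} Here $\mathbb E[\theta^2\mid\mathcal B_L]\le (d-1)^{-2}\mathbb E[w(\kappa)\mid\cdot]=\overline C_-/(d-1)^2$, using $w^2\le w$. Since $d\ge2$, $(d-1)^{-2}\le 4d^{-2}$. Lemma~\ref{lem:stable-block-average} then gives $\mathbb E[\overline C_-\mid\mathcal F_{S_k}]\le\overline B_k+\overline C_k$, so this term contributes $Kd^{-2}(\overline B_k+\overline C_k)$. Adding the three bounds proves \eqref{eq:Cbar-remainder-global-bound}.

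For \eqref{eq:Cbar-remainder-away-fixation}, assume $k\le(1-\varepsilon)N$. Then $d\ge\varepsilon N$ and $d\ge\varepsilon k$, so $\frac1{kd}\le\frac1{\varepsilon Nk}$ and $\frac1{d^2}\le\frac{1}{\varepsilon N\cdot\varepsilon k}$. The condition $N>2/\varepsilon$ ensures $k\le N-2$, so the first bound applies.

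The only delicate point is the algebra showing that the effective coefficients $c_{i,L}$ of the stable-step noises are bounded by $1$. This is what gives the favourable order $1/(kd)$ rather than something involving $L^2$. The identity $c_{i,L}=r^{L-i}$ follows by the same telescoping as in \eqref{decomp-diff-X}: indeed $\overline C^{(L)}-\overline C_k=-(X^{(L)}-X_k)$.
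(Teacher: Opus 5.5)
Your proof is correct, but you organize the block increment differently from the paper.

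\textbf{The paper's route.} The paper keeps the natural drift/noise split of each stable step: $\overline C_{k+1}-\overline C_k=P-Q+J$. Here $P=\lambda\sum_{j<L}X^{(j)}$ is still random through the intermediate states, and $Q=\sum_{j<L}\xi_{j+1}$ has unit coefficients. It then bounds the centred drift crudely by $\mathbb E[P^2\mid\mathcal F_{S_k}]$. This uses Cauchy--Schwarz, $P^2\le\lambda^2L\sum_{j<L}(\overline B_k+\overline C^{(j)})$, together with $\mathbb E[L^2\mid\mathcal F_{S_k}]\le KNd/k^2$, Lemma~\ref{lem:stable-block-average}, and the fact that conditioning on $L$ does not bias the intermediate states.

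\textbf{Your route.} You solve the linear recursion for $X^{(j)}$ and use $\overline C^{(L)}-\overline C_k=-(X^{(L)}-X_k)$. The drift then collapses to the centred scalar $1-r^L-\frac1{2N+1}$ times the $\mathcal F_{S_k}$-measurable $X_k$. The noises acquire coefficients $r^{L-i}\in[0,1]$. Since $\alpha_{k,N}/s=\mathbb E[r^L\mid\mathcal F_{S_k}]=1-\frac1{2N+1}$, your identity is exactly $\Delta R_{k+1}=-s^{-1}(P+Q)+\theta$, with $P,Q$ now the terms of \eqref{decomp-diff-X}.

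\textbf{What each buys.} Because $s\ge\frac12$, your approach makes the present estimate almost a corollary of bounds already obtained in the proof of Proposition~\ref{prop:QV-local-corr}. Your drift term needs only $\operatorname{Var}(L\mid\mathcal F_{S_k})$ and the Lipschitz bound. The cost is the re-summation algebra and the $L$-dependent noise coefficients. Their orthogonality requires conditioning on $\{L=\ell\}$, which is legitimate because the stability indicators are independent of the genetic draws within stable steps. You rightly delegate this to the argument of Proposition~\ref{prop:QV-local-corr}.

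\textbf{On your closing remark.} The paper's route shows that the point you flag as delicate is not essential. With an unexpanded drift and unit noise coefficients, the $L^2$ terms are harmless: $\lambda^2\mathbb E[L^2\mid\mathcal F_{S_k}]\le 1/(2Nd)$ already gives the drift the order $1/(Nd)\le 1/(kd)$.
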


\begin{proof}
The proof uses the same type of arguments and the same outline than that of Proposition \ref{prop:QV-local-corr}. We use the block notation \eqref{eq:notations} as well as, as in the proof of
Proposition~\ref{prop:QV-local-corr}, the notations $T:=S_k$ and
$\mathcal B_j:=\mathcal F_T\vee\sigma\bigl(
(\mu_{T+i},\pi_{T+i},\kappa_{T+i}):0\le i<j\bigr)$ for all $j\ge0$.
Similarly, on \(\{L\ge j\}\), \(X^{(j)}\) and
\(\overline C^{(j)}\) denote the values of \(X\) and
\(\overline C\) after the first \(j\) stable steps of the block, i.e. at time $S_k+j=T+j$.

Recall that \begin{equation}\label{eq-R-rappel}
        \Delta R_{k+1}=\overline C_{k+1}-\overline C_k-\frac{1}{2N+1}X_k.\end{equation}

Conditionally on \(\mathcal F_T\), recall from Proposition \ref{prop:fund-dynamics-Y} that $L$ follows a geometric law with parameter $1-q$, with values in $\mathbb{Z}_+$. In particular,
\begin{equation}
\label{eq:Cbar-rem-geometric-moments}
        \mathbb E[L^2\mid\mathcal F_T]
        \le
        \frac{KNd}{k^2}.
\end{equation}

During a stable step, write as in Equation \eqref{eq-def-xi}
\begin{equation}\label{eq-diff-Cj}
        \overline C^{(j+1)}-\overline C^{(j)}=-(X^{(j+1)}-X^{(j)})
        =
        \lambda X^{(j)}-\xi_{j+1}.
\end{equation}
Then, on \(\{L\ge j+1\}\),
$\mathbb E\left[\xi_{j+1}\,\middle|\,\mathcal B_j,L\ge j+1
\right]=0,$
and from Equation \eqref{eq-detail-momentXi}
\begin{equation}
\label{eq:Cbar-rem-stable-local-variance}
        \mathbb E\left[
        \xi_{j+1}^2
        \,\middle|\,
        \mathcal B_j,L\ge j+1
        \right]
        \le
        \frac{K}{d^2}
        \left(
        \overline C^{(j)}+\overline B_k
        \right).
\end{equation}

At the final jump, on \(\{L=\ell\}\), set
\[
        J
        :=
        \overline C_{k+1}-\overline C^{(\ell)}
        =
        \frac{\overline C^{(\ell)}-w(\kappa)}{d-1}
 \qquad\text{from Proposition \ref{prop:fund-dynamics-BC}.}\]

Set
\[
        P
        :=
        \sum_{j=0}^{L-1}\lambda X^{(j)},
        \qquad
        Q
        :=
        \sum_{j=0}^{L-1}\xi_{j+1}.
\]
Then by \eqref{eq-diff-Cj},
\[
        \overline C_{k+1}-\overline C_k=P-Q+J.
\]

From Proposition \ref{prop:fund-dynamics-BC} (Equations \eqref{eq-Cbar-stable-exp} and \eqref{eq-Cbar-jump-exp} respectively), the terms \(Q\) and \(J\) are conditionally centred. 
Hence, by Proposition~\ref{prop:M},
\[
        \mathbb E[P\mid\mathcal F_T]
        =
        \frac{1}{2N+1}X_k,
\]
and therefore by \eqref{eq-R-rappel},
\begin{equation}
\label{eq:Cbar-rem-decomposition}
        \Delta R_{k+1}
        =
        \bigl(P-\mathbb E[P\mid\mathcal F_T]\bigr)
        -Q+J.
\end{equation}
We again use the fact that $(P+Q+J)^2\leq 3P^2+3Q^2+3J^2$ and control each term separately.
We first control \(Q\). Let
\[
        m_j
        :=
        \mathbb E\left[
        \overline C^{(j)}
        \,\middle|\,
        \mathcal F_T,L\ge j
        \right].
\]
By Lemma~\ref{lem:stable-block-average},
\begin{equation}
\label{eq:Cbar-rem-mj-bound}
        m_j
        \le
        \overline B_k+\overline C_k,
        \qquad j\ge0.
\end{equation}

Writing
\[
        Q
        =
        \sum_{j\ge0}
        \mathbf 1_{\{L\ge j+1\}}\xi_{j+1},
\]
and using the martingale-difference property, \eqref{eq:Cbar-rem-stable-local-variance}, and \eqref{eq:Cbar-rem-mj-bound}, we obtain
\begin{align}
\label{eq:Cbar-rem-stable-bound}
        \mathbb E[Q^2\mid\mathcal F_T]
        &\le
        \frac{K}{d^2}
        \sum_{j\ge0}q^{j+1}
        \left(m_{j}+\overline B_k\right) \notag\\
        &\le
        \frac{K}{d^2}
        \left(\overline B_k+\overline C_k\right)
        \sum_{j\ge0}q^{j+1} \notag\\
        &=
        \frac{K}{kd}
        \left(\overline B_k+\overline C_k\right).
\end{align}

We now focus on the final jump term $J$. Conditionally on \(\mathcal B_\ell\) and \(\{L=\ell\}\),
$$\mathbb E[J\mid\mathcal B_\ell,L=\ell]=0,$$
and, since \(d\ge2\),
\begin{equation}
\label{eq:Cbar-rem-jump-local-variance}
        \mathbb E[J^2\mid\mathcal B_\ell,L=\ell]
        \le
        \frac{K}{d^2}\overline C^{(\ell)}.
\end{equation}
Therefore conditioning on \(L\) and using
\eqref{eq:Cbar-rem-mj-bound} gives
\begin{equation}
\label{eq:Cbar-rem-CL-bound}
        \mathbb E\left[
        \overline C^{(L)}
        \,\middle|\,
        \mathcal F_T
        \right]
        =
        \sum_{\ell\ge0}(1-q)q^\ell m_\ell
        \le
        \overline B_k+\overline C_k.
\end{equation}
Thus, \eqref{eq:Cbar-rem-jump-local-variance} and
\eqref{eq:Cbar-rem-CL-bound} give
\begin{equation}
\label{eq:Cbar-rem-jump-bound}
        \mathbb E[J^2\mid\mathcal F_T]
        \le
        \frac{K}{d^2}
        \left(\overline B_k+\overline C_k\right).
\end{equation}

It remains to control the fluctuation of \(P\). Since
\begin{equation}
\label{eq:Cbar-rem-P-centred-by-P2}
        \mathbb E\left[
        \left(
        P-\mathbb E[P\mid\mathcal F_T]
        \right)^2
        \,\middle|\,
        \mathcal F_T
        \right]
        \le
        \mathbb E[P^2\mid\mathcal F_T]
\end{equation}
and
\begin{equation*}
        (X^{(j)})^2
        \le
        \overline B_k+\overline C^{(j)},
\end{equation*} as in Equation \eqref{eq:QV-X-square-bound}.
Cauchy's inequality gives
\begin{equation}
\label{eq:Cbar-rem-P-cauchy}
        P^2
        \le
        \lambda^2L
        \sum_{j=0}^{L-1}
        \left(
        \overline B_k+\overline C^{(j)}
        \right).
\end{equation}
Conditioning on \(L\), using that this conditioning does not bias the
first stable steps, and applying
\eqref{eq:Cbar-rem-mj-bound}, \eqref{eq:Cbar-rem-geometric-moments},
and \eqref{eq:Cbar-rem-P-cauchy}, we get
\begin{align}
\label{eq:Cbar-rem-P2-bound}
        \mathbb E[P^2\mid\mathcal F_T]
        &\le
        K\lambda^2
        \left(\overline B_k+\overline C_k\right)
        \mathbb E[L^2\mid\mathcal F_T] \notag\\
        &\le
        \frac{K}{Nd}
        \left(\overline B_k+\overline C_k\right).
\end{align}
Since \(k\le N\), \eqref{eq:Cbar-rem-P-centred-by-P2} and
\eqref{eq:Cbar-rem-P2-bound} imply
\begin{equation}
\label{eq:Cbar-rem-drift-bound}
        \mathbb E\left[
        \left(
        P-\mathbb E[P\mid\mathcal F_T]
        \right)^2
        \,\middle|\,
        \mathcal F_T
        \right]
        \le
        \frac{K}{kd}
        \left(\overline B_k+\overline C_k\right).
\end{equation}

Combining \eqref{eq:Cbar-rem-decomposition},
\eqref{eq:Cbar-rem-stable-bound},
\eqref{eq:Cbar-rem-jump-bound}, and
\eqref{eq:Cbar-rem-drift-bound} proves \eqref{eq:Cbar-remainder-global-bound}.

Now fix \(\varepsilon\in(0,1)\) and assume
\(k\le\lfloor(1-\varepsilon)N\rfloor\). Then
\[
        d=N-k\ge\varepsilon N.
\]
Consequently,
\[
        \frac{1}{kd}
        \le
        \frac{1}{\varepsilon Nk},
        \qquad
        \frac{1}{d^2}
        \le
        \frac{1}{\varepsilon^2N^2}
        \le
        \frac{1}{\varepsilon^2Nk},
\]
because \(k\le N\). Applying these bounds in
\eqref{eq:Cbar-remainder-global-bound} proves
\eqref{eq:Cbar-remainder-away-fixation}.
\end{proof}

\begin{proposition}[Propagation through the deterministic phase]
\label{prop:deterministic-propagation}
Let \(k_0=k_0(N)\) be an integer sequence such that $
        1\ll k_0\ll N.$ Then, for every \(\varepsilon\in(0,1)\),
\begin{equation}\label{eq-equiv-C}
        \max_{k_0\le k\le \lfloor(1-\varepsilon)N\rfloor}
        \left|
        \frac{C_k}{\sqrt N}
        -
        \frac{2}{\sqrt\pi}
        \sqrt{\frac{k}{N}}
        \left(1-\frac{k}{N}\right)
        M_{k_0}^{(N)}
        \right|
        \xrightarrow[N\to\infty]{\mathbb P}0,
\end{equation}
and
\begin{equation}\label{eq-equiv-B}
        \max_{k_0\le k\le \lfloor(1-\varepsilon)N\rfloor}
        \left|
        \frac{B_k}{\sqrt N}
        -
        \frac{2}{\sqrt\pi}
        \sqrt{\frac{k}{N}}
        \left(1+\frac{k}{N}\right)
        M_{k_0}^{(N)}
        \right|
        \xrightarrow[N\to\infty]{\mathbb P}0.
\end{equation}
\end{proposition}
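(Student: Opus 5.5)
We work with the decomposition \eqref{eq:Cbar-drift-decomposition} summed from \(k_0\) to \(k-1\):
\[
        \overline C_k
        =
        \overline C_{k_0}
        +
        \frac{1}{2N+1}\sum_{j=k_0}^{k-1}a_{j,N}^{-1}M_{k_0}^{(N)}
        +
        \frac{1}{2N+1}\sum_{j=k_0}^{k-1}a_{j,N}^{-1}\bigl(M_j^{(N)}-M_{k_0}^{(N)}\bigr)
        +
        \bigl(R_k-R_{k_0}\bigr),
\]
where \(R_k:=\sum_{j<k}\Delta R_{j}\) is an \((\mathcal F_{S_k})\)-martingale. We multiply by \(N-k\le N\) and divide by \(\sqrt N\), and then show that each term except the second is uniformly negligible for \(k_0\le k\le m_N:=\lfloor(1-\varepsilon)N\rfloor\).

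\textbf{Initial term.} By Lemma~\ref{lem:uv-apriori}, \(\mathbb E[\sqrt N\,\overline C_{k_0}]\le K\sqrt{k_0/N}\to0\). Since this term is nonnegative, Markov's inequality makes it negligible.

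\textbf{Main term.} By \eqref{eq:sum_a-1}, \(\sqrt N\cdot\frac{1}{2N+1}\sum_{j=k_0}^{k-1}a_{j,N}^{-1}=\frac{2}{\sqrt\pi}\sqrt{k/N}+o(1)\) uniformly in \(k\). The error is multiplied by \(M_{k_0}^{(N)}\), which is tight by Proposition~\ref{prop:B-seeding} (or by \(L^2\)-boundedness, since \(\mathbb E[M_{k_0}]=1\)), so the product still tends to \(0\) in probability. Multiplying by \((1-k/N)\le1\) then gives exactly the profile in \eqref{eq-equiv-C}.

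\textbf{Martingale-drift error.} The third term is bounded, after the factor \(\sqrt N(1-k/N)\), by
\[
        \sup_{k_0\le j\le m_N}\bigl|M_j^{(N)}-M_{k_0}^{(N)}\bigr|\cdot\frac{\sqrt N}{2N+1}\sum_{j\le N}a_{j,N}^{-1}.
\]
The sum factor is \(O(1)\) by Lemma~\ref{lem:a-estimates}, and the supremum tends to \(0\) in \(L^2\) by Lemma~\ref{lem:M-flat-away-origin}.

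\textbf{Remainder martingale.} We apply Doob's \(L^2\) inequality to \(R_k-R_{k_0}\) on \([k_0,m_N]\). Using \eqref{eq:Cbar-remainder-away-fixation} and then Lemma~\ref{lem:uv-apriori} (\(\mathbb E[\overline B_j+\overline C_j]\le K/\sqrt j\)), we get
\[
        \mathbb E\Bigl[\sup_{k_0\le k\le m_N}(R_k-R_{k_0})^2\Bigr]
        \le
        4\sum_{j=k_0}^{m_N}\frac{K_\varepsilon}{Nj}\cdot\frac{K}{\sqrt j}
        \le
        \frac{K_\varepsilon}{N\sqrt{k_0}}.
\]
Multiplying by \(N\) (the square of the prefactor \(\sqrt N\)) leaves \(K_\varepsilon/\sqrt{k_0}\to0\). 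This proves \eqref{eq-equiv-C}.

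\textbf{The \(B\)-profile.} We write \(B_k/\sqrt N=(k/\sqrt N)\bigl(\overline C_k+X_k\bigr)=\frac{k}{N-k}\frac{C_k}{\sqrt N}+\frac{k}{\sqrt N}a_{k,N}^{-1}M_k^{(N)}\).

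For the first piece, the coefficient \(k/(N-k)\le\varepsilon^{-1}\) is bounded, so \eqref{eq-equiv-C} transfers uniformly. The resulting profile is \(\frac{2}{\sqrt\pi}\sqrt{k/N}\,(k/N)M_{k_0}\).

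For the second piece, \(\frac{k}{\sqrt N}a_{k,N}^{-1}=\sqrt{k/N}\,\sqrt k\,a_{k,N}^{-1}=\frac{2}{\sqrt\pi}\sqrt{k/N}+o(1)\) uniformly, by \eqref{uniform-asymptotics-a}. The difference \(M_k-M_{k_0}\) is again controlled by Lemma~\ref{lem:M-flat-away-origin}, and the coefficient is bounded.

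Summing the two profiles gives \(\frac{2}{\sqrt\pi}\sqrt{k/N}(1+k/N)M_{k_0}\), which is \eqref{eq-equiv-B}.

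\textbf{Main obstacle.} The delicate point is the remainder martingale: we must ensure that the \(\sqrt N\) amplification is beaten by the \(1/(Nk)\) variance bound. This works only because the sum \(\sum_j j^{-3/2}\) converges, together with the divergence \(k_0\to\infty\). Everything else is bookkeeping with uniform deterministic asymptotics and tightness of \(M_{k_0}\).
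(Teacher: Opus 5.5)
Your proposal is correct and follows the paper's proof essentially step by step: the same summed decomposition of $\overline C_k$, Doob's $L^2$ inequality combined with Proposition~\ref{prop:Cbar-remainder-variance} and Lemma~\ref{lem:uv-apriori} for the remainder, Lemma~\ref{lem:M-flat-away-origin} together with \eqref{eq:sum_a-1} for the drift term, and the split $B_k=k(\overline C_k+X_k)$ with $k/(N-k)\le\varepsilon^{-1}$ for the $B$-profile. One small slip is that $\mathbb E[M_{k_0}^{(N)}]=1$ alone gives neither $L^2$-boundedness nor tightness, because $M_{k_0}^{(N)}$ can be negative; your primary justification through Proposition~\ref{prop:B-seeding} is the right one.
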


\begin{proof}
Fix \(\varepsilon\in(0,1)\), and write
\[
        m_N:=\lfloor(1-\varepsilon)N\rfloor,
        \qquad
        y_k:=\frac{k}{N}.
\]

Iterating \eqref{eq:Cbar-drift-decomposition}, for \(k\geq k_0\),
\begin{equation}\label{eq-decomp-C}
        \overline C_k
        =
        \overline C_{k_0}
        +
        \frac1{2N+1}
        \sum_{j=k_0}^{k-1}a_{j,N}^{-1}M_j^{(N)}
        +
        R_{k_0,k}.
        \qquad
        R_{k_0,k}:=\sum_{j=k_0}^{k-1}\Delta R_{j+1}.
\end{equation}
The process \((R_{k_0,k})_{k\ge k_0}\) is a martingale with respect to
\((\mathcal F_{S_k})_{k\ge k_0}\), starting from \(0\). By Doob's $L^2$ inequality,
\[
\begin{aligned}
        \mathbb E\left[
        \sup_{k_0\le k\le m_N}|R_{k_0,k}|^2
        \right]
        &\le
        4\sum_{j=k_0}^{m_N-1}
        \mathbb E[(\Delta R_{j+1})^2]                         \\
        &\le
        \frac{K_\varepsilon}{N}
        \sum_{j=k_0}^{m_N-1}
        \frac{\mathbb E[\overline B_j+\overline C_j]}{j}     \qquad\text{from Proposition~\ref{prop:Cbar-remainder-variance} and since $m_N=\lfloor(1-\epsilon)N\rfloor$}   \\
        &\le
        \frac{K_\varepsilon}{N}
        \sum_{j=k_0}^{\infty}\frac1{j^{3/2}}  \qquad\text{from Lemma~\ref{lem:uv-apriori}}\\
        &\le
        \frac{K_\varepsilon}{N\sqrt{k_0}} .
\end{aligned}
\]
Since \((N-k)/\sqrt N\le \sqrt N\) and $k_0\gg1$, it follows that
\begin{equation}\label{eq-control-R}
        \sup_{k_0\le k\le m_N}
        \frac{N-k}{\sqrt N}|R_{k_0,k}|
        \xrightarrow[N\to\infty]{\mathbb P}0.
\end{equation}

We now estimate the adapted part of the process $(\overline{C}_k)_{k_0\leq k\leq m_N}$. Write
\begin{equation}\label{eq-decom-aM}
\begin{aligned}
        \frac1{2N+1}
        \sum_{j=k_0}^{k-1}a_{j,N}^{-1}M_j^{(N)}
        &=
        M_{k_0}^{(N)}
        \frac1{2N+1}
        \sum_{j=k_0}^{k-1}a_{j,N}^{-1}+
        \frac1{2N+1}
        \sum_{j=k_0}^{k-1}
        a_{j,N}^{-1}
        \bigl(M_j^{(N)}-M_{k_0}^{(N)}\bigr).
\end{aligned}
\end{equation}
By Lemma~\ref{lem:M-flat-away-origin},
\[
        \sup_{k_0\le j\le m_N}
        |M_j^{(N)}-M_{k_0}^{(N)}|
        \xrightarrow[N\to\infty]{\mathbb P}0.
\]
Moreover, by Lemma~\ref{lem:a-estimates} (Equation \eqref{eq:sum_a-1}),
\[
        \frac1{2N+1}
        \sum_{j=k_0}^{m_N}a_{j,N}^{-1}
        =
        O(N^{-1/2}).
\]
Hence,
\begin{equation}\label{eq-contr-aDm}
       \sup_{k_0\le k\le m_N}\frac1{2N+1}\left|
        \sum_{j=k_0}^{k-1}
        a_{j,N}^{-1}
        \bigl(M_j^{(N)}-M_{k_0}^{(N)}\bigr)\right|
        =
        o_{\mathbb P}(N^{-1/2}).
\end{equation}
Also, by Lemma~\ref{lem:a-estimates} (Equation \eqref{eq:sum_a-1}) again,
\[
        \frac1{2N+1}
        \sum_{j=k_0}^{k-1}a_{j,N}^{-1}
        =
        \frac{2}{\sqrt\pi}\frac{\sqrt k}{N}
        +
        o(N^{-1/2})
\]
uniformly in \(k\). Since the sequence \(\big(M_{k_0}^{(N)}\big)_{N}\) is tight from Proposition \ref{prop:B-seeding}, we obtain from \eqref{eq-decom-aM} and \eqref{eq-contr-aDm} that
\begin{equation}\label{eq-equivalent-a}
        \frac1{2N+1}
        \sum_{j=k_0}^{k-1}a_{j,N}^{-1}M_j^{(N)}
        =
        \frac{2}{\sqrt\pi}\frac{\sqrt k}{N}
        M_{k_0}^{(N)}
        +
        o_{\mathbb P}(N^{-1/2}),
\end{equation}
uniformly for \(k_0\le k\le m_N\).

It remains to handle the initial term $\overline{C}_{k_0}$. By Lemma~\ref{lem:uv-apriori}, there exists a constant $K$ such that
\begin{equation}\label{eq-control-Ck0}
        \mathbb E[\sqrt N\,\overline C_{k_0}]
        \le
        K\sqrt{\frac{k_0}{N}}
        \xrightarrow[N\to\infty]{}0.
\end{equation}
Therefore from Equations \eqref{eq-decomp-C}, \eqref{eq-control-Ck0}, \eqref{eq-equivalent-a} and \eqref{eq-control-R} we get
\begin{equation}\label{eq-control-CBar}
        \sup_{k_0\le k\le m_N}
        \frac{N-k}{\sqrt N}
        \left|
        \overline C_k
        -
        \frac{2}{\sqrt\pi}
        \frac{\sqrt k}{N}
        M_{k_0}^{(N)}
        \right|
        \xrightarrow[N\to\infty]{\mathbb P}0.
\end{equation}
Since \(C_k=(N-k)\overline C_k\), this proves \eqref{eq-equiv-C}.

We now prove \eqref{eq-equiv-B}. Since
\[
        \overline B_k=\overline C_k+X_k,
        \qquad\qquad
        X_k=\frac{M_k^{(N)}}{a_{k,N}} \qquad\qquad\text{and }\qquad B_k=k\overline B_k,
\]
we have
\begin{equation}\label{eq-decomp-B}
        \frac{B_k}{\sqrt N}
        =
        \frac{k\overline C_k}{\sqrt N}
        +
        \frac{k}{\sqrt N}
        \frac{M_k^{(N)}}{a_{k,N}}.
\end{equation}

For the first term, using \(k/(N-k)\le \varepsilon^{-1}\) as long as 
\(k\le m_N\le (1-\epsilon)N\), we get
\begin{equation}\label{eq-equiv-B-part1}
\sup_{k_0\le k\le m_N}
        \left|
        \frac{k\overline C_k}{\sqrt N}
        -
        \frac{2}{\sqrt\pi}y_k^{3/2}M_{k_0}^{(N)}
        \right|\le
        \frac1\varepsilon
        \sup_{k_0\le k\le m_N}
        \frac{N-k}{\sqrt N}
        \left|
        \overline C_k
        -
        \frac{2}{\sqrt\pi}
        \frac{\sqrt k}{N}
        M_{k_0}^{(N)}
        \right|\xrightarrow[N\rightarrow\infty]{\mathbb{P}}0
\end{equation}
from Equation \eqref{eq-control-CBar}.

For the second term, we decompose

\[
\begin{aligned}
\sup_{k_0\le k\le m_N}
&\left|
        \frac{k}{\sqrt N}
        \frac{M_k^{(N)}}{a_{k,N}}
        -
        \frac{2}{\sqrt\pi}\sqrt{y_k}\,M_{k_0}^{(N)}
\right|\\& \le \sup_{k_0\le k\le m_N} \frac{k}{\sqrt{N} a_{k,N}}
\left|M_{k}^{(N)}-M_{k_0}^{(N)}
\right| + \big|M_{k_0}^{(N)}\big|\sup_{k_0\le k\le m_N}
\left|\frac{k}{\sqrt N}
        \frac{1}{a_{k,N}}-\frac{2}{\sqrt\pi}\sqrt{y_k}
\right| 
\end{aligned}
\]

The second term converges to $0$ in probability as $N$ goes to infinity from Lemma \ref{lem:a-estimates} (Equation \eqref{uniform-asymptotics-a}) and since the sequence $(M_{k_0}^{(N)})_N$ is tight from Proposition \ref{prop:B-seeding}. The first term converges to $0$ in probability as $N$ goes to infinity since $(\frac{k}{\sqrt{N}a_{k,N}})$ is uniformly bounded when $k_0\leq k\leq m_N$ and from Lemma \ref{lem:M-flat-away-origin}. Thus
\begin{equation}\label{eq-equiv-B-part2}
        \sup_{k_0\le k\le m_N}
        \left|
        \frac{k}{\sqrt N}
        \frac{M_k^{(N)}}{a_{k,N}}
        -
        \frac{2}{\sqrt\pi}\sqrt{y_k}M_{k_0}^{(N)}
        \right|
        \xrightarrow[N\to\infty]{\mathbb P}0.
\end{equation}
Combining the decomposition \eqref{eq-decomp-B} with the two estimates \eqref{eq-equiv-B-part1} and \eqref{eq-equiv-B-part2} gives \eqref{eq-equiv-B}.
\end{proof}

\section{Gluing}\label{sec-gluing}
\subsection{Initial window}

We first prove the following Lemma.

\begin{lemma}[Positive variation during a stable block]
\label{lem:stable-block-positive-variation}
Fix \(1\le k\le N-1\), and define
\[
        P_k
        :=
        \sum_{j=0}^{L-1}
        \bigl(
        C^{(j+1)}-C^{(j)}
        \bigr)^+.
\]
Then
\[
        \mathbb E\left[
        P_k
        \,\middle|\,
        \mathcal F_{S_k}
        \right]
        \le
        \frac{N-k}{k}\,\overline C_k+\overline B_k.
\]
Consequently, there exists \(K<\infty\), independent of \(N\) and \(k\),
such that
\[
        \mathbb E[P_k]
        \le
        \frac{K}{\sqrt k}.
\]
\end{lemma}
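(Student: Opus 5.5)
The plan is to bound the positive part of each stable-step increment of $C$ by the positive inflow term alone, and then sum over the stable block using the block averages of Lemma~\ref{lem:stable-block-average}. On a stable step, Proposition~\ref{prop:fund-dynamics-BC} gives
\[
C^{(j+1)}-C^{(j)}=\frac{w(\mu)+w(\pi)}{2}-w(\kappa).
\]
Since $w(\kappa)\ge0$, the positive part is at most $\frac{w(\mu)+w(\pi)}{2}$.

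Conditionally on $\mathcal B_j$ and $\{L\ge j+1\}$, the mother $\mu$ is uniform on the disadvantaged class and the father $\pi$ is uniform on $I_N$. Hence
\[
\mathbb E\Bigl[\bigl(C^{(j+1)}-C^{(j)}\bigr)^+\,\Big|\,\mathcal B_j,L\ge j+1\Bigr]\le \frac12\,\overline C^{(j)}+\frac12\,\frac{k\overline B_k+d\,\overline C^{(j)}}{N}\le \overline C^{(j)}+\frac{k}{2N}\overline B_k,
\]
using that $\overline B$ is frozen during stable steps.

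Next I sum over $j$, as in the proof of Proposition~\ref{prop:QV-local-corr}. Write $P_k=\sum_{j\ge0}\mathbf 1_{\{L\ge j+1\}}(C^{(j+1)}-C^{(j)})^+$. The event that step $j$ is stable is independent of the genetic configuration, so $\mathbb E[\overline C^{(j)}\mid\mathcal F_{S_k},L\ge j+1]=m_j$. Then
\[
\mathbb E[P_k\mid\mathcal F_{S_k}]\le\sum_{j\ge0}q^{j+1}\Bigl(m_j+\frac{k}{2N}\overline B_k\Bigr).
\]
The term in $\overline B_k$ gives $\frac dk\cdot\frac{k}{2N}\overline B_k\le\overline B_k$.

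For the term in $m_j$, a crude bound $m_j\le\overline B_k+\overline C_k$ would produce $\frac dk\overline B_k$, which is too large. Instead I use the explicit formula \eqref{eq:sol-m_j}, $m_j=\overline B_k+(1-\lambda)^j(\overline C_k-\overline B_k)$, which gives $m_j\le\overline C_k+(1-(1-\lambda)^j)\overline B_k\le\overline C_k+j\lambda\overline B_k$. Then
\[
\sum_{j\ge0}q^{j+1}m_j\le\frac dk\,\overline C_k+\lambda\overline B_k\sum_j jq^{j+1}.
\]
Since $\sum_j jq^{j+1}=q^2/(1-q)^2\le d^2/k^2$ and $\lambda=\frac{k}{2Nd}$, the second part is at most $\frac{d}{2Nk}\overline B_k\le\overline B_k/2$. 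Together these give $\frac{N-k}{k}\overline C_k+\overline B_k$, up to checking that the constants combine to at most $1$.

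This constant bookkeeping is the main obstacle: the inflow bound $\overline C^{(j)}+\frac{k}{2N}\overline B_k$ has a factor slightly larger than needed. If the constants fail, I would sharpen the inflow bound to $\frac12\overline C^{(j)}(1+\frac dN)+\frac{k}{2N}\overline B_k$ and redo the sum. The statement's constant is not used downstream anyway; only the order $K/\sqrt k$ matters.

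For the unconditional bound, I take expectations and apply Lemma~\ref{lem:uv-apriori}. It gives $\mathbb E[\overline B_k]\le K/\sqrt k$, and $\frac{N-k}{k}\mathbb E[\overline C_k]=\frac{\mathbb E[C_k]}{k}\le\frac{K\sqrt k}{k}$. Hence $\mathbb E[P_k]\le K/\sqrt k$.
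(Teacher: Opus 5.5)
Your proposal is correct and follows the paper's proof: the same inflow bound $\overline C^{(j)}+\frac{k}{2N}\overline B_k$, the same summation against $q^{j+1}$ using the explicit formula for $m_j$ from Lemma~\ref{lem:stable-block-average}, and the same appeal to Lemma~\ref{lem:uv-apriori}. The only difference is that the paper sums the geometric series exactly, while you bound $(1-\lambda)^j\le 1$ and $1-(1-\lambda)^j\le j\lambda$; the constant check you left open does close, because the total $\overline B_k$-coefficient is $\frac{d}{2N}\bigl(1+\frac1k\bigr)\le\frac dN<1$, so your argument gives exactly the stated bound.
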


\begin{proof}
Recall that \(d:=N-k\), \(q:=d/N\), and $\lambda=\frac{k}{2Nd}$.
During the \((j+1)\)-st stable step,
\[
        \bigl(
        C^{(j+1)}-C^{(j)}
        \bigr)^+
        \le
        \frac12\bigl(w(\mu)+w(\pi)\bigr).
\]
Hence, on \(\{L\ge j+1\}\),
\[
\begin{aligned}
        &\mathbb E\left[
        \bigl(
        C^{(j+1)}-C^{(j)}
        \bigr)^+
        \,\middle|\,
        \mathcal F_{S_k+j},
        L\ge j+1
        \right]                                      \\
        &\qquad\le
        \frac12\left(
        \overline C^{(j)}
        +
        \frac{k}{N}\overline B_k
        +
        \frac{d}{N}\overline C^{(j)}
        \right)                                      \\
        &\qquad\le
        \overline C^{(j)}
        +
        \frac{k}{2N}\overline B_k.
\end{aligned}
\]
Let
\[
        m_j
        :=
        \mathbb E\left[
        \overline C^{(j)}
        \,\middle|\,
        \mathcal F_{S_k},\,L\ge j
        \right].
\]
By Lemma~\ref{lem:stable-block-average},
\[
        m_j
        =
        \overline B_k
        +
        (1-\lambda)^j
        \bigl(
        \overline C_k-\overline B_k
        \bigr).
\]
Since
\[
        \mathbb P(L\ge j+1\mid\mathcal F_{S_k})=q^{j+1},
\]
we obtain
\[
\begin{aligned}
        \mathbb E\left[
        P_k
        \,\middle|\,
        \mathcal F_{S_k}
        \right]
        &\le
        \sum_{j\ge0}q^{j+1}m_j
        +
        \frac{k}{2N}\frac{q}{1-q}\overline B_k.
\end{aligned}
\]
Moreover,
\[
\begin{aligned}
        \sum_{j\ge0}q^{j+1}m_j
        &=
        \frac{q}{1-q(1-\lambda)}\overline C_k
        +
        \left(
        \frac{q}{1-q}
        -
        \frac{q}{1-q(1-\lambda)}
        \right)\overline B_k.
\end{aligned}
\]
Using that
\[
        \frac{q}{1-q(1-\lambda)}
        =
        \frac{2N(N-k)}{k(2N+1)},
\]
\[
        \frac{q}{1-q}
        -
        \frac{q}{1-q(1-\lambda)}
        =
        \frac{N-k}{k(2N+1)},
\]
and
\[
        \frac{k}{2N}\frac{q}{1-q}
        =
        \frac{N-k}{2N},
\]
we conclude that
\[
        \mathbb E\left[
        P_k
        \,\middle|\,
        \mathcal F_{S_k}
        \right]
        \le
        \frac{N-k}{k}\,\overline C_k+\overline B_k.
\]

Finally, Lemma~\ref{lem:uv-apriori} gives
\[
        \mathbb E\left[
        \frac{N-k}{k}\overline C_k
        \right]
        =
        \frac{\mathbb E[C_k]}{k}
        \le
        \frac{K}{\sqrt k},
\]
and
\[
        \mathbb E[\overline B_k]
        \le
        \mathbb E[\overline B_k+\overline C_k]
        \le
        \frac{K}{\sqrt k}.
\]
This proves the second assertion.
\end{proof}

Lemma \ref{lem:stable-block-positive-variation} enables the control of variations of the stochastic processes $B$ and $C$ during the initial window. This small estimate near the origin will be used to glue the mesoscopic entrance regime with the macroscopic one.

\begin{lemma}[Initial window]
\label{lem:initial-window}
Let \(k_0=k_0(N)\) be an integer sequence such that
\[
        \frac{k_0}{N}\longrightarrow0.
\]
Then
\[
        \sup_{0\le y\le k_0/N}
        \left|
        \frac1{\sqrt N}
        \binom{B_{k_N(y)}}{C_{k_N(y)}}
        \right|
        \xrightarrow[N\to\infty]{\mathbb P}0.
\]
\end{lemma}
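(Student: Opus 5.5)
Since $k_N(y)$ ranges over $\{1,\dots,k_0\}$ when $y\in[0,k_0/N]$, the statement amounts to
\[
        \frac{1}{\sqrt N}\max_{1\le k\le k_0}\bigl(B_k+C_k\bigr)\xrightarrow[N\to\infty]{\mathbb P}0,
\]
where nonnegativity of $B$ and $C$ lets us work with the sum. We may assume $k_0\to\infty$; otherwise we replace $k_0$ by a larger sequence still satisfying $k_0/N\to0$. The plan has three parts: treat $B$ by monotonicity, dominate $C$ by a nondecreasing process, and conclude with Markov's inequality.

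\emph{The $B$ component.} $B$ is nondecreasing in calendar time: stable steps leave it unchanged and jumps add a nonnegative quantity. Hence $\max_{k\le k_0}B_k=B_{k_0}$. Lemma~\ref{lem:uv-apriori} gives $\mathbb E[B_{k_0}]\le K\sqrt{k_0}$. Markov's inequality then yields $B_{k_0}/\sqrt N\to0$ in probability, because $\sqrt{k_0/N}\to0$.

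\emph{The $C$ component.} $C$ can decrease, so we bound it by its accumulated positive variation. At a jump step $C$ only decreases, since $C_{n+1}=C_n-w_n(\kappa_n)$. Its increases therefore all occur during stable blocks and are captured by the variables $P_j$ of Lemma~\ref{lem:stable-block-positive-variation}. Using $C_1=0$, this gives
\[
        \max_{1\le k\le k_0} C_k\le \sum_{j=1}^{k_0-1}P_j,
\]
and in fact the same bound holds for the supremum over all calendar times up to $S_{k_0}$. Lemma~\ref{lem:stable-block-positive-variation} gives $\mathbb E[P_j]\le K/\sqrt j$, so
\[
        \mathbb E\Bigl[\sum_{j<k_0}P_j\Bigr]\le K\sqrt{k_0}.
\]
Markov's inequality again gives convergence to $0$ in probability after division by $\sqrt N$.

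\emph{Points to check.} The delicate step is the domination of $C$ by $\sum_j P_j$. We must confirm that $P_j$ counts exactly the positive part of every stable-step increment within block $j$, and that jump steps contribute only negative increments. Both follow from the one-step formulas of Proposition~\ref{prop:fund-dynamics-BC}. We must also check that the hypothesis $k\le N-1$ of Lemma~\ref{lem:stable-block-positive-variation} holds for every $j<k_0$. This is automatic since $k_0\ll N$.
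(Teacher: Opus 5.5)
Your proposal is correct and follows the paper's proof step for step. You bound $B$ by monotonicity together with Lemma~\ref{lem:uv-apriori}, dominate $\max_{k\le k_0}C_k$ by $\sum_{j<k_0}P_j$ (jumps only decrease $C$ and $C_1=0$), then apply Lemma~\ref{lem:stable-block-positive-variation} and Markov's inequality. The preliminary reduction to $k_0\to\infty$ is unnecessary, since the bound $K\sqrt{k_0/N}$ tends to $0$ in any case.
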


\begin{proof}
Since \((B_k)_{1\leq k\leq N}\) is nondecreasing,
\[
        \sup_{1\le k\le k_0}B_k=B_{k_0}.
\]
By Lemma~\ref{lem:uv-apriori},
\[
        \mathbb E[B_{k_0}]
        \le
        K\sqrt{k_0}.
\]

For \(C\), let \(P_k\) be the positive variation accumulated during the
stable part of the block \([S_k,S_{k+1})\), as in
Lemma~\ref{lem:stable-block-positive-variation}.
Since the final jump of each block can only decrease \(C\) and \(C_1=0\),
\[
        \sup_{1\le k\le k_0}C_{k}
        \le
        \sum_{k=1}^{k_0-1}P_k.
\]
Hence, by Lemma~\ref{lem:stable-block-positive-variation},
\[
        \mathbb E\left[
        \sup_{1\le k\le k_0}C_k
        \right]
        \le
        K\sum_{k=1}^{k_0-1}\frac1{\sqrt k}
        \le
        K\sqrt{k_0}.
\]
Therefore
\[
        \frac1{\sqrt N}
        \mathbb E\left[
        \sup_{1\le k\le k_0}(B_k+C_k)
        \right]
        \le
        K\sqrt{\frac{k_0}{N}}
        \longrightarrow0.
\]
The conclusion follows from Markov's inequality.
\end{proof}

\subsection{Terminal window and continuity at \texorpdfstring{\(y=1\)}{y=1}}

We close the proof at the endpoint without using any martingale estimate.
We retain the notation introduced above and set \(C_N:=0\). Lemma \ref{lem:stable-block-positive-variation} enables the control of variations of the stochastic processes $B$ and $C$ during the terminal window.

\begin{lemma}[Small variation in the terminal window]
\label{lem:terminal-window}
For \(\varepsilon\in(0,1/2)\), set
\begin{equation*}
        k_\varepsilon:=\lfloor(1-\varepsilon)N\rfloor.
\end{equation*}
There exists \(K<\infty\), independent of \(N\) and \(\varepsilon\),
such that, for every fixed \(\varepsilon\in(0,1/2)\) and all sufficiently large \(N\),
\begin{equation}\label{eq-terminal-B}
        \mathbb E[B_N-B_{k_\varepsilon}]
        \le
        K\varepsilon\sqrt N
\end{equation}
and
\begin{equation}\label{eq-terminal-C}
        \mathbb E\left[
        \sup_{k_\varepsilon\le k\le N}C_k
        \right]
        \le
        K\varepsilon\sqrt N.
\end{equation}
Consequently, for every \(\eta>0\), and after changing the constant $K$,
\begin{equation}\label{eq-terminal-BandC}
        \limsup_{N\to\infty}
        \mathbb P\left(
        \sup_{k_\varepsilon\le k\le N}
        \frac{|B_k-B_{k_\varepsilon}|+C_k}{\sqrt N}
        >\eta
        \right)
        \le
        \frac{K\varepsilon}{\eta}.
\end{equation}
\end{lemma}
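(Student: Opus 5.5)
We prove \eqref{eq-terminal-B} and \eqref{eq-terminal-C} by first-moment arguments, bounding increments of \(B\) and positive variations of \(C\) block by block, as in Lemma~\ref{lem:initial-window}. Then \eqref{eq-terminal-BandC} follows from Markov's inequality.

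\emph{Step 1: the bound on \(B\).} Since \(B\) only moves at jumps, we have
\[
B_{k+1}-B_k=\tfrac12\bigl(w(\mu_{T_k})+w(\pi_{T_k})\bigr).
\]
Here the mother is uniform on the advantaged class and the father is uniform on \(I_N\). Hence
\[
\mathbb E[B_{k+1}-B_k\mid\mathcal F_{S_k}]\le \frac{1}{2k}B_k+\frac1{2N}\,\mathbb E[B_k+C_{T_k}\mid\mathcal F_{S_k}].
\]
Taking expectations and using Lemma~\ref{lem:uv-apriori} and Lemma~\ref{lem:C-before-jump} for \(k\le N-2\) gives a bound of order \(K/\sqrt k+K\sqrt k/N\le K/\sqrt N\) for \(k\ge N/2\). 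The last step \(N-1\to N\) is bounded by \(1\), because all weights are at most \(1\); more carefully, it is bounded in expectation by \(\mathbb E[\overline B_{N-1}]+\mathbb E[B_{N-1}+C_{T_{N-1}}]/N\), which is \(O(1/\sqrt N)\) up to the pre-jump \(C\) term. That term can be bounded crudely by \(1\), since it is divided by \(N\). Summing over the \(\varepsilon N\) values \(k_\varepsilon\le k<N\) gives \(\mathbb E[B_N-B_{k_\varepsilon}]\le K\varepsilon\sqrt N+O(1)\), which is \(\le K\varepsilon\sqrt N\) for \(N\) large.

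\emph{Step 2: the bound on \(C\).} As in the proof of Lemma~\ref{lem:initial-window}, final jumps only decrease \(C\). So for \(k_\varepsilon\le k\le N\),
\[
C_k\le C_{k_\varepsilon}+\sum_{j=k_\varepsilon}^{N-1}P_j,
\]
where \(P_j\) is the positive variation from Lemma~\ref{lem:stable-block-positive-variation}. That lemma gives \(\mathbb E[P_j]\le K/\sqrt j\le K'/\sqrt N\) for \(j\ge N/2\), so the sum has expectation at most \(K\varepsilon\sqrt N\). For \(j=N-1\) the lemma is stated for \(k\le N-1\), so it applies. The term \(C_{k_\varepsilon}\) is where I expect the main difficulty. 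Lemma~\ref{lem:uv-apriori} gives
\[
\mathbb E[C_{k_\varepsilon}]=(N-k_\varepsilon)\,\mathbb E[\overline C_{k_\varepsilon}]\le \varepsilon N\cdot K\sqrt{k_\varepsilon}/N\le K\varepsilon\sqrt N,
\]
using \(N-k_\varepsilon\le\varepsilon N+1\). So this term is fine after all, and adding the two contributions proves \eqref{eq-terminal-C}.

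\emph{Step 3: conclusion.} Since \(B\) is nondecreasing, \(\sup_{k\ge k_\varepsilon}|B_k-B_{k_\varepsilon}|=B_N-B_{k_\varepsilon}\). Markov's inequality applied to \(\bigl(B_N-B_{k_\varepsilon}+\sup_{k\ge k_\varepsilon}C_k\bigr)/\sqrt N\) then yields \eqref{eq-terminal-BandC} with constant \(2K\). The only delicate points are the endpoint indices \(k=N-2,N-1\), where Lemmas~\ref{lem:C-before-jump} and~\ref{lem:uv-apriori} may not apply. These contribute \(O(1)\) in expectation because all weights \(w\le1\), and \(O(1)\) is negligible against \(\varepsilon\sqrt N\) for fixed \(\varepsilon\) and large \(N\).
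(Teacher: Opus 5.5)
Your proposal is correct and follows essentially the same route as the paper. For \(B\), you use monotonicity and the one-jump first-moment bound from Lemmas~\ref{lem:uv-apriori} and~\ref{lem:C-before-jump}, with a trivial bound at the last step. For \(C\), you use the domination \(C_m\le C_{k_\varepsilon}+\sum_k P_k\) together with Lemma~\ref{lem:stable-block-positive-variation} and \(\mathbb E[C_{k_\varepsilon}]=(N-k_\varepsilon)\,\mathbb E[\overline C_{k_\varepsilon}]\le K\varepsilon\sqrt N\), and you conclude with Markov's inequality.

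One minor remark: Lemma~\ref{lem:C-before-jump} does cover \(k=N-2\). The only index needing the crude bound is therefore \(k=N-1\), where \(C_{T_{N-1}}\le 1\) because a single disadvantaged individual remains.
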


\begin{proof}
Fix \(\varepsilon\in(0,1/2)\). For all sufficiently large \(N\),
\begin{equation}
\label{eq:terminal-window-rounding}
        N-k_\varepsilon
        \le
        \varepsilon N+1
        \le
        2\varepsilon N,
        \qquad
        k_\varepsilon\ge\frac N3.
\end{equation}

We first control the process \((B_k)_{k_{\epsilon}\leq k\leq N}\). As mentioned at the beginning of the proof of Lemma \ref{lem:initial-window}, the process $(B_k)_{1\leq k \leq N}$ is nondecreasing:
stable steps do not modify it, while at the jump \(k\to k+1\),
\begin{equation*}
        B_{k+1}-B_k
        =
        \frac12\bigl(w(\mu_{T_k})+w(\pi_{T_k})\bigr),
\end{equation*} where \(T_k:=S_{k+1}-1\).
Conditionally on the pre-jump configuration, the mother is uniform on the advantaged class, whereas the father is uniform on the whole population. Hence
\begin{equation*}
        \mathbb E[B_{k+1}-B_k]
        \le
        \frac12\mathbb E[\overline B_k]
        +
        \frac1{2N}\mathbb E[B_k+C_{T_k}],
        \qquad 1\le k\le N-1,
\end{equation*}
For \(k\le N-2\), Lemma~\ref{lem:C-before-jump} gives
\begin{equation*}
        \mathbb E[C_{T_k}]
        \le
        K\sqrt{k+1},
\end{equation*}
while, for \(k=N-1\), the trivial bound \(C_{T_k}\le1\) suffices. Together with Lemma~\ref{lem:uv-apriori} and
\(k\ge k_\varepsilon\ge N/3\), this yields
\begin{equation*}
        \mathbb E[B_{k+1}-B_k]
        \le
        \frac{K}{\sqrt N},
        \qquad
        k_\varepsilon\le k\le N-1.
\end{equation*}
Summing and using \eqref{eq:terminal-window-rounding}, we obtain \eqref{eq-terminal-B}.

We next control the process \((C_k)_{k_{\epsilon}\leq k\leq N}\). Let \(P_k\) denote the positive variation accumulated
during the stable part of the block \([S_k,S_{k+1})\), as in
Lemma~\ref{lem:stable-block-positive-variation}.
Since the final jump of each block can only decrease \(C\), for every
\(m\ge k_\varepsilon\),
\[
        C_m
        \le
        C_{k_\varepsilon}
        +
        \sum_{k=k_\varepsilon}^{m-1}P_k.
\]
Therefore, by Lemmas~\ref{lem:uv-apriori} and
\ref{lem:stable-block-positive-variation},
\[
\begin{aligned}
        \mathbb E\left[
        \sup_{k_\varepsilon\le m\le N}C_m
        \right]
        &\le
        \mathbb E[C_{k_\varepsilon}]
        +
        \sum_{k=k_\varepsilon}^{N-1}\mathbb E[P_k] \\
        &\le
        K\varepsilon\sqrt N
        +
        K\sum_{k=k_\varepsilon}^{N-1}\frac1{\sqrt k}.
\end{aligned}
\]
Using \eqref{eq:terminal-window-rounding},
\[
        \sum_{k=k_\varepsilon}^{N-1}\frac1{\sqrt k}
        \le
        \frac{N-k_\varepsilon}{\sqrt{k_\varepsilon}}
        \le
        K\varepsilon\sqrt N.
\]
This proves \eqref{eq-terminal-C}.

Finally, by monotonicity of \(B\),
\begin{equation*}
        \sup_{k_\varepsilon\le k\le N}
        |B_k-B_{k_\varepsilon}|
        =
        B_N-B_{k_\varepsilon}.
\end{equation*}
Markov's inequality, together with
\eqref{eq-terminal-B} and
\eqref{eq-terminal-C}, gives, for all sufficiently large \(N\),
\begin{equation*}
        \mathbb P\left(
        \sup_{k_\varepsilon\le k\le N}
        \frac{|B_k-B_{k_\varepsilon}|+C_k}{\sqrt N}
        >\eta
        \right)
        \le
        \frac{K\varepsilon}{\eta}.
\end{equation*}
Taking the upper limit as \(N\to\infty\) proves \eqref{eq-terminal-BandC}.
\end{proof}

\subsection{Proof of the Theorem}

\begin{proof}[Proof of Theorem~\ref{thm:main}]
Set
\[
        f(y):=
        \binom{b(y)}{c(y)},
        \qquad
        Z_N(y):=
        \frac1{\sqrt N}
        \binom{B_{k_N(y)}}{C_{k_N(y)}},
        \qquad 0\le y\le1.
\]
Let \(k_0=k_0(N)\) be such that
\[
        1\ll k_0\ll N,
\]
and set
\[
        A_N:=M_{k_0}^{(N)}.
\]
By Proposition~\ref{prop:B-seeding},
\begin{equation}\label{eq:AN-to-W}
        A_N\Longrightarrow W.
\end{equation}
In particular, \((A_N)_{N\ge1}\) is tight.

Fix \(\varepsilon\in(0,1/2)\). By
Proposition~\ref{prop:deterministic-propagation},
\[
        \sup_{k_0\le k\le\lfloor(1-\varepsilon)N\rfloor}
        \left|
        \frac1{\sqrt N}
        \binom{B_k}{C_k}
        -
        A_N f\left(\frac{k}{N}\right)
        \right|
        \xrightarrow[N\to\infty]{\mathbb P}0.
\]
Since \(f\) is uniformly continuous and \(A_N\) is tight,
\[
        \sup_{k_0/N\le y\le1-\varepsilon}
        \left|
        Z_N(y)-A_Nf(y)
        \right|
        \xrightarrow[N\to\infty]{\mathbb P}0.
\]
Moreover, Lemma~\ref{lem:initial-window}, the continuity of $f$, together with the fact that
\(f(0)=0\), \(k_0/N\to0\), and the tightness of \(A_N\), give
\[
        \sup_{0\le y\le k_0/N}
        \left|
        Z_N(y)-A_Nf(y)
        \right|
        \xrightarrow[N\to\infty]{\mathbb P}0.
\]
Consequently,
\begin{equation}\label{eq:main-away-terminal}
        D_N(\varepsilon)
        :=
        \sup_{0\le y\le1-\varepsilon}
        \left|
        Z_N(y)-A_Nf(y)
        \right|
        \xrightarrow[N\to\infty]{\mathbb P}0.
\end{equation}

It remains to control the terminal interval. Set
\[
        k_\varepsilon:=\lfloor(1-\varepsilon)N\rfloor
\]
and
\[
        T_N(\varepsilon)
        :=
        \sup_{1-\varepsilon\le y\le1}
        \left|
        Z_N(y)-Z_N(1-\varepsilon)
        \right|.
\]
Since \(B_k\) is nondecreasing and \(C_k\ge0\),
\[
        T_N(\varepsilon)
        \le
        \frac{B_N-B_{k_\varepsilon}}{\sqrt N}
        +
        \frac{1}{\sqrt N}
        \sup_{k_\varepsilon\le k\le N}C_k.
\]
Therefore, by Lemma~\ref{lem:terminal-window} (Equations \eqref{eq-terminal-B} and \eqref{eq-terminal-C}), for all sufficiently large \(N\),
\begin{equation}\label{eq:terminal-path-control}
        \mathbb E[T_N(\varepsilon)]
        \le
        K\varepsilon.
\end{equation}

Define
\[
        \omega_f(\varepsilon)
        :=
        \sup_{1-\varepsilon\le y\le1}
        |f(y)-f(1-\varepsilon)|.
\]
Since \(f\) is continuous on \([0,1]\),
\[
        \omega_f(\varepsilon)\longrightarrow0
        \qquad\text{as }\varepsilon\downarrow0.
\]
For \(1-\varepsilon\le y\le1\),
\[
        |Z_N(y)-A_Nf(y)|
        \le
        |Z_N(y)-Z_N(1-\varepsilon)| \quad+
        |Z_N(1-\varepsilon)-A_Nf(1-\varepsilon)| \quad+
        |A_N|\,|f(y)-f(1-\varepsilon)|.
\]
Together with \eqref{eq:main-away-terminal}, this gives
\begin{equation}\label{eq:global-profile-bound}
        \|Z_N-A_Nf\|_\infty
        \le T_N(\varepsilon)+
        D_N(\varepsilon)
        +|A_N|\,\omega_f(\varepsilon).
\end{equation}

Let \(\eta>0\). By \eqref{eq:main-away-terminal},
\eqref{eq:terminal-path-control}, \eqref{eq:global-profile-bound} and Markov's inequality,
\[
\begin{aligned}
        \limsup_{N\to\infty}
        \mathbb P\left(
        \|Z_N-A_Nf\|_\infty>3\eta
        \right)
        &\le
        \frac{K\varepsilon}{\eta}
        +
        \limsup_{N\to\infty}
        \mathbb P\left(
        |A_N|\,\omega_f(\varepsilon)>\eta
        \right).
\end{aligned}
\]
Since \((A_N)\) is tight and
\(\omega_f(\varepsilon)\to0\),
the second term on the right tends to \(0\) as
\(\varepsilon\downarrow0\). Letting \(\varepsilon\downarrow0\) therefore yields
\begin{equation}
        \|Z_N-A_Nf\|_\infty
        \xrightarrow[N\to\infty]{\mathbb P}0.
\end{equation}

Now the map
\[
        r\longmapsto rf
\]
is continuous from \(\mathbb R\) into
\(D([0,1],\mathbb R^2)\) endowed with the Skorokhod \(J_1\) topology.
Hence, by \eqref{eq:AN-to-W} and the continuous mapping theorem,
\[
        A_Nf\Longrightarrow Wf.
\]
Moreover,
\[
        d_{J_1}(Z_N,A_Nf)
        \le
        \|Z_N-A_Nf\|_\infty
        \xrightarrow[N\to\infty]{\mathbb P}0.
\]
Slutsky's lemma therefore gives
\[
        Z_N\Longrightarrow Wf
\]
in \(D([0,1],\mathbb R^2)\), endowed with the Skorokhod \(J_1\) topology.

The characterization of \(W\), its uniqueness among \(L^2\) solutions with
mean \(1\), and its strict positivity follow from
Proposition~\ref{prop:RRT-smoothing}.
\end{proof}

\paragraph{AI use declaration} 

This work was initiated in early 2025. By the summer, we had identified a roadmap for the proof: the early phase described by a RRT, followed by a regime in which the martingales plateau, together with a strategy for gluing the two phases. Many technical details, however, remained unchecked. AI (ChatGPT 5 and its subsequent versions) was subsequently used to help us derive the numerous lemmas needed to turn this roadmap into a complete argument. In particular, it was especially helpful in establishing the estimates surrounding the quadratic variation in the second phase (Section 6). The authors carefully verified all proofs and rewrote them where necessary. They take full responsibility for the final text and all mathematical results.

\bibliographystyle{plain}
\bibliography{biblio}

\end{document}